\documentclass[10pt,a4paper]{article}
\usepackage[T1]{fontenc}
\usepackage[utf8]{inputenc}
\pdfoutput=1
\usepackage{lmodern}
\usepackage{textcomp}
\usepackage{microtype}
\usepackage[DIV=10,BCOR=5mm,headinclude=false,footinclude=false]{typearea}
\usepackage[font=small,labelfont=bf]{caption}
\usepackage{titlesec}
\usepackage{tocloft}
\usepackage{amsmath,amssymb}
\usepackage{enumerate} 
\usepackage{amsfonts}
\usepackage{graphicx}
\usepackage{amsthm}
\usepackage{yfonts}
\usepackage{tikz-cd}
\usepackage{amssymb}
\usepackage{mathrsfs}
\usepackage[parfill]{parskip}
\usepackage{hyperref}
\usepackage[nameinlink]{cleveref}
\usepackage{MnSymbol }
\usepackage{verbatim}
\usepackage{array}
\usepackage{mathdots}

\newtheorem{theorem}{Theorem}
\newtheorem{conjecture}{Conjecture}
\newtheorem{corollary}{Corollary}[theorem]
\newtheorem{lemma}{Lemma}[section]
\newtheorem{prop}{Proposition}[section]

\theoremstyle{remark}
\newtheorem{rem}{Remark}[section]
 \setcounter{tocdepth}{1}
\newcommand{\Ms}{\mathcal{M}\mathrm{ult}}
 \newcommand{\C}{\mathbb{C}}
\newcommand{\Ff}{\mathbf{F}}

\newcommand{\gl}{\mathrm{GL}}
\newcommand{\rep}{\mathfrak{Rep}}
\newcommand{\LL}{\Lambda}
\newcommand{\abs}{\lvert-\lvert}
\newcommand{\Z}{\mathrm{Z}}
\newcommand{\cus}{\mathfrak{Cus}}
\newcommand{\NN}{\mathbb{N}}
\newcommand{\fm}{\mathfrak{m}}
\newcommand{\fn}{\mathfrak{n}}
\newcommand{\De}{\Delta}
\newcommand{\Ga}{\Gamma}
\newcommand{\sh}[1]{\overset{\leftarrow}{#1}}
\newcommand{\rsa}{\rightsquigarrow}
\newcommand{\fd}{\mathfrak{d}}
\newcommand{\fdp}{\mathfrak{d}}

\newcommand{\ra}{\rightarrow}
\newcommand{\nr}{\abs}

\newcommand{\ZZ}{\mathbb{Z}}
\newcommand{\Irr}{\mathfrak{Irr}}
\newcommand{\soc}{\mathrm{soc}}
\newcommand{\cosoc}{\mathrm{cosoc}}

\newcommand{\ho}{\mathrm{Hom}}

\newcommand{\ain}[3]{#1\in \{#2,\ldots,#3\}}
\newcommand{\Dde}{\mathcal{D}_{\fa}}

\newcommand{\hra}{\hookrightarrow}
\newcommand{\irs}{\Irr^\square}
\newcommand{\Se}{\mathcal{S}eg}
\newcommand{\KK}{\mathrm{K}}
\newcommand{\irr}{\Irr}
\newcommand{\cusp}{\mathrm{cusp}}
\newcommand\restr[2]{{\left.\kern-\nulldelimiterspace #1\vphantom{\big|}\right|_{#2}}}
\newcommand{\did}{\mathbf{d}}
\newcommand{\die}{\mathbf{e}}
\newcommand{\mq}{\Ms_Q}
\newcommand{\gdim}{\mathrm{grdim}}
\newcommand{\oo}{\mathbb{O}}
\newcommand{\co}{\mathbf{Comp}}
\newcommand{\ex}{\mathrm{ext}_\Pi^1}
\newcommand{\hp}{\mathrm{hom}_\Pi}
\newcommand{\Ex}{\mathrm{Ext}_\Pi^1}
\newcommand{\Hp}{\mathrm{Hom}_\Pi}
\newcommand{\fa}{\mathfrak{a}}

\newcommand{\id}{\mathrm{Ind}}
\renewcommand{\L}{\mathrm{L}}
\title{Poles of intertwining operators in terms of irreducible components of Lusztig's characteristic variety in type $A$}
\author{Johannes Droschl}
\begin{document}
\maketitle
\begin{abstract}
    In this paper, we propose a conjectural formula for the order of the poles of intertwining operators in the context of the representation theory of general linear groups over $p$-adic fields. 
    More specifically, we conjecturally relate the order of the pole to the dimension of a Hom-space associated with irreducible components of Lusztig's characteristic variety in type $A$. We verify the conjecture in a wide range of cases.
\end{abstract}
\section{Introduction}
The connection between the representation theory of general linear groups over $p$-adic fields and the geometry of quiver varieties of type $A_n$ has always played a crucial role in the development of the local Langlands program, \emph{cf.} \cite{Bor76} \cite{KazLus87}, \cite{ChrGinz}. However, in recent years, it has become evident that not only the quiver varieties themselves, but also their cotangent bundles and, in particular, certain Lagrangian subvarieties within them, govern many representation-theoretic questions that extend beyond the scope of the traditional Deligne-Langlands correspondence, \emph{cf.} \cite{Lus90}, \cite{Lus91}. So far this connection is only subject to speculation, and we are just able to see its shadows in terms of a large array of conjectures linking the representation theory with the geometry.
The goal of this paper is to contribute another conjecture to this growing body of work and provide supporting evidence for its validity.

To make the above more concrete, let us introduce some notation. Fix a local, non-archimedean field $\Ff$  with absolute value $\abs$. 
In this introduction, we will consider irreducible smooth representations of $\gl_n(\Ff)$ in the principal block and we denote their set by $\irr_n^0$. Throughout the main part of the paper, we will drop this assumption and we restrict ourselves right now only for the sake of simplicity.
Recall that the irreducible representations in $\irr_n^0$ are precisely those whose cuspidal support consists of characters of $\Ff^\times$ of the form $\abs^n,\,n\in \ZZ$. We are thus able to associate to $\pi\in \irr_n^0$ a vector $\did^\pi\in\NN^\ZZ$ such that $\did_n^\pi$ records the multiplicity of $\abs^n$ in the cuspidal support of $\pi$. In particular, $\did$ has only finitely many non-zero entries.
For such fixed $\did$, we let $\irr^{\did}=\{\pi\in\irr_n^0:\did^\pi=\did\}$.
On the other side, we consider for $\did$ as above the space $E^+(\did)$ of complex representations of the quiver $A_\infty^+$
\[\begin{tikzcd}
    \cdots\arrow[r]&\bullet\arrow[r]&\bullet\arrow[r]&\cdots 
\end{tikzcd}\]
with dimension vector $\did$, on which the group $G_\did\coloneq \bigtimes_{i\in\ZZ}\gl_{\did_i}(\C)$ acts via conjugation.
The cotangent bundle $T^*E^+(\did)$ admits a moment map to the Lie algebra of $G_\did$. The inverse image of $0$ under this moment map is called Lusztig's characteristic variety and denoted by $\Lambda(\did)$. Its elements allow the following interpretation. Let $\overline{\Pi}$ be the path algebra of the doubled quiver
\[\begin{tikzcd}
    \cdots\arrow[r, bend left]&\arrow[l, bend left]\bullet\arrow[r, bend left]&\arrow[l, bend left]\bullet\arrow[r, bend left]&\arrow[l, bend left]\cdots 
\end{tikzcd}\]
and $\Pi$ be the quotient of $\overline{\Pi}$ by the relation $f_{i+1}e_i-e_{i-1}f_i,$ where $e_i$ is the arrow $i\rightarrow i+1$ and $f_i$ the arrow $i-1\leftarrow i$. Then $\Lambda(\did)$ consists of modules of $\Pi$ with dimension vector $\did$.

There exist bijections
\[\irr^\did\leftrightarrow\{G_\did\text{- orbits on }E^+(\did)\}\leftrightarrow \co(\did)\coloneq \{\text{irreducible components of }\Lambda(\did)\},\]
where the first arrow is explicated through Bernstein's and Zelevinsky's work \cite{Zel}, \cite{BerZel76} and the multisegments parametrizing both objects.
The second is given by sending an orbit to the closure of its conormal bundle. For later use we write $\co=\bigcup_{\did}\co(\did)$, where the union is over all dimension vectors, and recall the Aubert-Zelevinsky involution $(-)^*\colon\co\ra\co$, see \cite{Aub}, \cite{MoeWal}, \cite{ZelIII}.

The first bijection links the representation theory on the left side to the geometry on the right side via the so called Deligne-Langlands correspondence. In fact, the behavior of $\Irr^\did$ is captured by the derived category of $G_\did$-equivariant perverse sheaves on $E^+(\did)$ and the counterpart of an irreducible representation is the intersection cohomology complex supported on the closure of the orbit corresponding to it under the above bijection.
Most famously, this correspondence was used to prove the $p$-adic analog of the Kazhdan-Lusztig conjecture, see \cite{ZelIII}, \cite{ChrGinz}.

In this paper, we will focus on the bijection between irreducible representations and irreducible components, which is far less understood. 
It will be denoted by $\pi\mapsto C(\pi)$ and its inverse by $C\mapsto \pi(C)$, where $\pi\in \Irr^{\did},\,C\in\co(\did)$. One of the first appearances of Lustzig's variety in the representation theory of $\gl_n(\Ff)$ was in the form of the crystal graph of Kashiwara and Saito, \emph{cf.} \cite{KashSai}, which is closely connected to the $\rho$-derivatives of \cite{Mder}, \cite{Jader}.

For $\pi_1\in\irr_n^0,\pi_2\in\irr_m^0$ we denote by $\pi_1\times\pi_2$ the normalized parabolic induction of $\pi_1\otimes \pi_2$ along the suitable parabolic subgroup containing the upper-triangular matrices, which is a smooth representation of finite length of $\gl_{n+m}(\Ff)$. We call $\pi\in\irr_n^0$ $\square$-irreducible if $\pi\times\pi$ is irreducible.
This notation is motivated by similar constructions in the setting of quantum and cluster algebras, where the corresponding modules are called \emph{real}, see for example in the work of Geiss, Herandez, Leclerc, Schröer, Kang, Kashiwara, Kim, Oh, and many others, \emph{cf.} \cite{HerLec}, \cite{GeisLecSchr}, \cite{KKKO}. Such $\square$-irreducible representations allow one to extend the notions of crystals and $\rho$-derivatives, \emph{cf.} \cite{KKKOhead}, \cite{LapMin25}.
Historically, these representations were the first for which a geometric property in $\Lambda(\did)$ mirroring their behavior was conjectured.
\begin{conjecture}[{cf. \cite[Conjecture 5.3]{GeiSchr}, \cite[Conjecture 3.4.C]{LapMin25}}]\label{C:rigid}
        Let $C\in \co(\did)$. Then $\pi(C)\times \pi(C)$ is irreducible if and only if $C$ contains an open $G_\did$-orbit. 
\end{conjecture}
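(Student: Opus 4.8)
\emph{A possible approach.} Let $\fm$ be the multisegment attached to $C$, equivalently to $\pi(C)$, so that $C=\overline{T^*_{O_\fm}E^+(\did)}$ for the $G_\did$-orbit $O_\fm\subseteq E^+(\did)$ of multisegment $\fm$, and $\pi(C)=\pi(\fm)$. I would first reformulate the right-hand side module-theoretically. Using that $\Lambda(\did)$ is equidimensional of dimension $\dim E^+(\did)$ (Lusztig), and that for any $\Pi$-module $M$ of dimension vector $\did$ the orbit $G_\did\cdot M$ has codimension $\tfrac12\dim\Ex(M,M)$ in $\Lambda(\did)$, one sees that $C$ contains an open $G_\did$-orbit if and only if its generic $\Pi$-module $M_\fm$ satisfies $\ex(M_\fm,M_\fm)=0$, i.e. is \emph{rigid}; in that case $C=\overline{G_\did\cdot M_\fm}$ and $M_\fm$ is the unique rigid module supported on $C$. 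In type $A$, $\ex(M_\fm,M_\fm)=0$ is an explicit combinatorial condition on $\fm$, and I would keep the right-hand side of the conjecture in this shape.

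For the left-hand side I would work in the Grothendieck ring $\mathcal{R}$ of the block with its Zelevinsky basis $\{b_\fn\}$. Since $\pi(\fm)\times\pi(\fm)$ always has $\pi(2\fm)$ as its unique irreducible quotient (where $2\fm$ doubles every multiplicity of $\fm$), and since $[\pi(\fm)]^2$ being a single basis element forces this induced representation to be irreducible, one has $\pi(\fm)\times\pi(\fm)$ irreducible $\iff b_\fm^2=b_{2\fm}$. Realizing $\{b_\fn\}$ by the intersection cohomology sheaves $\mathrm{IC}(\overline{O_\fn})$ on the spaces $E^+(\cdot)$ and parabolic induction as pull-push along the induction correspondence, $b_\fm^2=b_{2\fm}$ becomes the statement that $\mathrm{IC}(\overline{O_\fm})\star\mathrm{IC}(\overline{O_\fm})$ is, up to shift, $\mathrm{IC}(\overline{O_{2\fm}})$ with no other summand, which by the decomposition theorem is a smallness property of the induction morphism over $\overline{O_{2\fm}}$. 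Both sides of the conjecture are now conditions on $\fm$, and it remains to match them; alternatively one can try to run this comparison inductively by transporting it along the $\rho$-derivative / crystal structure that acts compatibly on $\irr^\did$ and on $\co(\did)$.

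I would split the match into two implications. \emph{Rigid $\Rightarrow$ $\square$-irreducible:} if $M_\fm$ is rigid then so is $M_\fm\oplus M_\fm$, and — provided the generic $\Pi$-module structure on the orbit of $2\fm$ is $M_\fm\oplus M_\fm$ — the component of $\Lambda(2\did)$ labelled by $2\fm$ again carries a dense orbit; transporting this through the Lagrangian analogue, between $\Lambda(\did)$ and $\Lambda(2\did)$, of the induction correspondence bounds the summands of $\mathrm{IC}(\overline{O_\fm})\star\mathrm{IC}(\overline{O_\fm})$ to the single term $\mathrm{IC}(\overline{O_{2\fm}})$. Conceptually this is the $\gl_n(\Ff)$-instance of the fact that rigid modules over a preprojective algebra categorify cluster monomials \cite{GeisLecSchr}, whose classes multiply with no lower-order terms, combined with the compatibility of the Kang--Kashiwara--Kim--Oh monoidal categorification \cite{KKKO} with normalized $R$-matrices. \emph{Not rigid $\Rightarrow$ reducible:} from the type-$A$ description of when $\ex(M_\fm,M_\fm)\ne 0$, non-rigidity forces $\fm$ to contain a definite obstructing sub-configuration (morally, two linked segments, possibly after an iterated reduction), and one checks — via the Lapid--M\'inguez analysis of the irreducibility of $\pi\times\pi$ \cite{LapMin25} — that such a configuration already makes $\pi(\fm)\times\pi(\fm)$ reducible, which is the contrapositive.

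The main obstacle is the second implication in full generality: the precise combinatorial criterion for $\square$-irreducibility of $\pi(\fm)$ in terms of $\fm$ is not known unconditionally, so the equivalence ``$\pi(\fm)$ is $\square$-irreducible $\iff M_\fm$ is rigid'' can for now be confirmed only on the families of multisegments — ladders and the further classes treated in \cite{LapMin25} — where the $\square$-irreducibility side is understood; removing this restriction would need either a new reducibility criterion for $\pi\times\pi$, or a direct geometric argument showing that a nonzero self-extension of $M_\fm$ forces an additional irreducible component of $\Lambda(2\did)$ into the support of $\mathrm{IC}(\overline{O_\fm})\star\mathrm{IC}(\overline{O_\fm})$.
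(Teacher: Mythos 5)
This statement is \Cref{C:rigid}, a \emph{conjecture} in the paper (attributed to Geiss--Schr\"oer and Lapid--M{\'i}nguez), not a theorem; the paper offers no proof of it, and indeed the surrounding text explains that it and the related \Cref{C:int} are only known in restricted cases (when one of the components is \emph{balanced}, and for Leclerc's example). So there is no ``paper's own proof'' to compare against, and your text should be read as a research sketch rather than a proof.

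Within that frame, the reduction you make on the geometric side is correct and standard: by the Crawley--Boevey codimension formula, $C$ contains an open $G_\did$-orbit if and only if its generic $\Pi$-module is rigid, i.e.\ $\ex(C,C)=0$, which is precisely the paper's notion of a rigid component. Likewise, the reformulation ``$\pi(\fm)\times\pi(\fm)$ irreducible $\iff [\Z(\fm)]^2=[\Z(2\fm)]$ in the Grothendieck group'' is valid by Zelevinsky's multiplicity-one statement (\Cref{T:zel}(6)). The genuine gaps are in both implications you try to establish. For ``rigid $\Rightarrow$ $\square$-irreducible'' you invoke the preprojective-algebra categorification of cluster monomials (GLS) together with KKKO monoidal categorification via $R$-matrices; but the passage from those frameworks to the $p$-adic $\gl_n(\Ff)$ side is exactly what is conjectural here, so this argument is circular without an unconditional comparison theorem between the two categorifications. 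The convolution/smallness reformulation via IC sheaves is suggestive, but one would still need to show that a dense orbit in $C*C$ forces smallness of the induction map over $\overline{O_{2\fm}}$, which is not automatic and does not follow from the Lagrangian picture alone. For ``non-rigid $\Rightarrow$ reducible,'' you rely on an explicit type-$A$ combinatorial criterion for $\ex\neq0$ to produce an obstructing sub-configuration; the criterion of M{\'i}nguez--Lapid in \Cref{S:regular} (the $4231$/$3412$ patterns) is only established for \emph{regular} multisegments, and neither non-rigidity nor $\square$-reducibility is controlled by such subpatterns for general $\fm$. You acknowledge this, and correctly so: the state of the art is exactly the balanced case via \Cref{L:bala}/\Cref{L:central}, and outside it the equivalence remains open.

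A small additional caution: your claim that ``$M_\fm\oplus M_\fm$ is the generic module of $C*C$'' whenever $M_\fm$ is rigid is true, but it only shows $C*C$ is a rigid component; translating that into irreducibility of $\pi(\fm)\times\pi(\fm)$ already presupposes the conjecture. If you want to make progress unconditionally, the realistic target, mirroring the paper's \Cref{T:main}, is to prove the equivalence under a balancedness hypothesis, using the $\fa$-decomposition (\Cref{L:indint}, \Cref{L:indcomp}) on both sides and the intertwining-operator reductions of \Cref{S:CoP} rather than IC-sheaf smallness.
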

Irreducible components satisfying the condition of the conjecture are called \emph{rigid}.
More generally, the irreducibility of parabolic induction is conjecturally governed by the following criterion.
\begin{conjecture}[{cf. \cite[Conjecture 5.3]{GeiSchr}, \cite[Conjecture 3.4.A]{LapMin25}}]
    Let $C,D\in \co$. Then $\pi(C)\times\pi(D)$ is irreducible if and only if there exists open subsets $U_C\subseteq C,\, U_D\subseteq D$ such that for all $(x,y)\in U_C\times U_D$ we have 
    \[\dim_\C\Ex(x,y)=0.\]
\end{conjecture}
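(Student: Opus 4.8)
The plan is to translate both sides of the asserted equivalence into combinatorics of multisegments and then to match them. Write $\mathfrak{c},\mathfrak{d}$ for the multisegments attached to $C$ and $D$ under the bijections recalled above, so that $\pi(C)$ and $\pi(D)$ are the corresponding irreducible representations. Since $C$ and $D$ are closures of conormal bundles to $G_\did$-orbits they are irreducible, and $\dim_\C\Ex(-,-)$ is upper semicontinuous on the irreducible variety $C\times D$; hence the displayed condition is equivalent to the vanishing of the generic value
\[\mathsf{e}(C,D)\coloneq\min\{\dim_\C\Ex(x,y):x\in C,\,y\in D\},\]
which is the ``generic Ext'' of the two components in the sense of Baumann--Kamnitzer--Tingley and Geiss--Leclerc--Schr\"oer. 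So the task becomes: $\pi(C)\times\pi(D)$ is irreducible if and only if $\mathsf{e}(C,D)=0$. On the representation-theoretic side I would invoke the combinatorial criterion of Lapid--M\'inguez, which decides irreducibility of $\pi(C)\times\pi(D)$ from $\mathfrak{c}$ and $\mathfrak{d}$ alone.

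The core of the argument is the computation of $\mathsf{e}(C,D)$, for which I would rely on three structural inputs. First, the $2$-Calabi--Yau property of $\Pi$ gives a functorial isomorphism $\Ex(x,y)\cong\Ex(y,x)^{\vee}$, so $\mathsf{e}(C,D)=\mathsf{e}(D,C)$, matching the fact that irreducibility of $\pi(C)\times\pi(D)$ does not depend on the order of the factors. Second, Crawley-Boevey's formula expresses $\dim_\C\Hp(x,y)+\dim_\C\Hp(y,x)-\dim_\C\Ex(x,y)$ through the symmetrized Euler form of the dimension vectors, so the vanishing of generic $\Ex$ is equivalent to a statement about generic $\Hp$-dimensions; these can be approached via the combinatorics of $\mathfrak{c}$ and $\mathfrak{d}$, and for the building blocks --- a single segment against a single segment --- one checks directly that generic $\Ex$ is nonzero precisely when the two segments are linked, in agreement with the classical reducibility criterion for a product of two segment representations. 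Third, the Aubert--Zelevinsky involution $(-)^*\colon\co\to\co$ satisfies $\mathsf{e}(C,D)=\mathsf{e}(C^*,D^*)$ and it carries irreducibility of $\pi(C)\times\pi(D)$ to that of $\pi(C^*)\times\pi(D^*)$, so one may replace the pair $(\mathfrak{c},\mathfrak{d})$ by its dual whenever that is combinatorially more convenient.

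With these tools in place I would reduce to base configurations. Removing from $\mathfrak{c}$ and $\mathfrak{d}$ a segment that is unlinked to everything on the other side corresponds geometrically to a fibration of the relevant components, and should preserve both the irreducibility of the product and the vanishing of $\mathsf{e}$; one may also peel off common parts using $\rho$-derivatives and the Kashiwara--Saito crystal operators on $\co$. Iterating, one is left with ``coprime'' configurations --- for instance ladders, or the case where one of $\pi(C),\pi(D)$ is $\square$-irreducible, which is the situation of \Cref{C:rigid} --- and these can be settled by comparing the Lapid--M\'inguez criterion against the explicit generic $\Ex$-computation. This is the approach behind the verification of the conjecture in the range of cases announced in the abstract.

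The step I expect to be the main obstacle is the computation of $\mathsf{e}(C,D)$ for an arbitrary pair of components. The generic module on an irreducible component of $\Lambda(\did)$ is only implicitly pinned down by its multisegment, and controlling its degenerations --- hence $\dim_\C\Ex(x,y)$ --- requires a firm understanding of the component closures, which becomes delicate outside the special families above. A secondary difficulty is that the Lapid--M\'inguez criterion, although algorithmic, is not a closed formula, so matching it uniformly against the geometry appears out of reach with present methods; this is exactly why the verification is carried out only in cases where one of the two sides collapses to something transparent.
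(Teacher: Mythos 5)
The statement you were asked to prove is one of the paper's \emph{conjectures}, attributed to Geiss--Schr\"oer and Lapid--M\'inguez, and the paper does not prove it; there is no proof to compare against. You correctly signal this by calling your text a ``proof strategy'' rather than a proof, and you correctly locate the essential obstruction: there is no general method for computing the generic value $\ex(C,D)$ for an arbitrary pair of irreducible components of $\Lambda$, nor a closed combinatorial criterion for irreducibility of $\pi(C)\times\pi(D)$, so the two sides of the asserted equivalence cannot presently be matched in full generality. The existing verification in \cite{MinLa18} and \cite{LapMin25} (which the paper invokes and builds on) covers only the case where at least one of $C,D$ is balanced --- precisely because in that situation both sides collapse to something computable.

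That said, the ingredients you assemble are exactly the ones the paper and its sources rely on in the balanced case: upper semicontinuity of $\dim_\C\Ex$ on $C\times D$, the two-Calabi--Yau duality $\Ex(x,y)\cong\Ex(y,x)^*$, the Crawley--Boevey identity $\ex(C,D)=\hp(C,D)+\hp(D,C)-(\gdim(C),\gdim(D))$, compatibility with the Aubert--Zelevinsky involution, and an inductive peeling-off of basic pieces to reduce to transparent configurations (ladders, $\square$-irreducible factors). One small correction: the isomorphism $\Ex(x,y)\cong\Ex(y,x)^*$ does follow from the $2$-CY structure (\emph{cf.} the remark in the paper, where the stated isomorphism is $\mathrm{Ext}_\Pi^2(x,y)\cong\Hp(y,x)^*$, which by the CY pairing also gives the $\mathrm{Ext}^1$-symmetry), but you should be explicit that the inductive reduction is only known to terminate in a tractable base case when one of the multisegments is balanced; otherwise the ``peel off a basic piece that strongly commutes'' step is not available, and this is where a genuine proof would need a new idea.
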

To go beyond irreducibility, we recall the binary operation 
\[*\colon \co(\did)\times\co(\die)\ra\co(\did+\die)\]
introduced by Aizenbud and Lapid in \cite{AizLap} given by the generic extension.
In \cite{LapMin25}, M{\'i}nguez and Lapid conjecture the following.
\begin{conjecture}[{\cite[Conjecture 5.1]{LapMin25}}]
    Let $C,D\in \co$. Then $\pi(C*D)$ is a subrepresentation of $\pi(C)\times \pi(D)$.
\end{conjecture}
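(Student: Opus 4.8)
The plan is to pass to the combinatorial side, where $*$ becomes the generic extension of multisegments, and then to prove the resulting statement by induction on the number of segments, the base case being the single‑segment situation, which is handled by the socle theory of Lapid--M\'inguez.

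\emph{Reduction to multisegments.} Under the bijections recalled above, write $C=C(\fm),\,D=C(\fn)$ for the multisegments $\fm,\fn$ labelling the corresponding $G_\did$-orbits on $E^+$, put $\pi(\fm)\coloneq\pi(C(\fm))$, and recall that by \cite{Zel}, \cite{BerZel76} these are, in the appropriate normalisation, the Langlands quotients (equivalently, up to the Aubert--Zelevinsky involution, the Zelevinsky representations) attached to $\fm,\fn$. Since $*$ on $\co$ is defined through generic extensions \cite{AizLap}, one has $C*D=C(\fm*\fn)$, where now $*$ is the generic extension of the corresponding modules over the linearly oriented type-$A$ quiver underlying $A_\infty^+$. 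Working, as we may, within a single cuspidal line, a rank-one check fixes the conventions so that on the representation side ``subrepresentation'' is the correct direction; thus it suffices to prove
\[\pi(\fm*\fn)\hra\pi(\fm)\times\pi(\fn)\qquad\text{for all multisegments }\fm,\fn.\]

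\emph{Single-segment case.} For a segment $\De$ and any multisegment $\fl$, the representation $\pi(\De)$ is $\square$-irreducible, so by \cite{LapMin25} the socle of $\pi(\fl)\times\pi(\De)$ is irreducible and appears in $\pi(\fl)\times\pi(\De)$ with multiplicity one; the key point is that this socle equals $\pi(\fl*\{\De\})$. Here $\fl*\{\De\}$ is computed by an explicit ``slide-and-merge'' recipe for extending $M_\fl$ by the indecomposable $M_\De$, and one matches it step by step with the M\oe glin--Waldspurger algorithm \cite{MoeWal} computing $\soc(\pi(\fl)\times\pi(\De))$; passing to contragredients and using the standard intertwining operator then yields $\soc(\pi(\De)\times\pi(\fl))=\pi(\{\De\}*\fl)$ as well. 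I expect this identification to be the main obstacle: the remaining ingredients are either formal or already available, whereas comparing the quiver-theoretic generic extension by an indecomposable module with the socle computation has to be carried out by hand. (An alternative is geometric: $O_{\fl*\{\De\}}$ is open in the variety of extensions of $M_\fl$ by $M_\De$, which under the perverse-sheaf description of parabolic induction and the Jacquet functor should force $\pi(\fl)\otimes\pi(\De)$ into the cosocle of the relevant Jacquet module of $\pi(\fl*\{\De\})$, giving the embedding by Frobenius reciprocity; but controlling that cosocle from the geometry is itself delicate.)

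\emph{The induction.} We induct on the number of segments of $\fm$, the empty multisegment being trivial. If $\fm$ is nonempty, let $\De$ be a segment of $\fm$ of maximal right endpoint and set $\fm''\coloneq\fm\setminus\{\De\}$; then no segment of $\fm''$ begins immediately to the right of $\De$, so $\mathrm{Ext}^1(M_\De,M_{\fm''})=0$, the generic extension of $M_{\fm''}$ by $M_\De$ splits, and $\fm=\fm''*\{\De\}$. By associativity of $*$ (Reineke), $\fm*\fn=\fm''*(\{\De\}*\fn)$, so applying the inductive hypothesis to $\fm''$ and then the single-segment case to $\{\De\}*\fn$ gives
\[\pi(\fm*\fn)\hra\pi(\fm'')\times\pi(\{\De\}*\fn)\hra\pi(\fm'')\times\pi(\De)\times\pi(\fn).\]
Write $B$ for the last representation. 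On the other hand $\pi(\fm)=\pi(\fm''*\{\De\})=\soc(\pi(\fm'')\times\pi(\De))$, so $\pi(\fm)\times\pi(\fn)$ embeds in $B$, and $\pi(\fm*\fn)$ occurs as a subquotient of $\pi(\fm)\times\pi(\fn)$, being the leading term of $[\pi(\fm)][\pi(\fn)]$ in the Grothendieck ring. Finally, identifying parabolic induction with the Hall product, the iterated generic extension $\pi(\fm*\fn)$ occurs in $B$ with multiplicity exactly one. A simple module that occurs in $B$ with multiplicity one, embeds into $B$, and is a subquotient of a submodule $A\subseteq B$ must be contained in $A$ (otherwise $A\oplus\pi(\fm*\fn)\hra B$ would give multiplicity $\geq 2$); hence $\pi(\fm*\fn)\hra\pi(\fm)\times\pi(\fn)$, closing the induction.
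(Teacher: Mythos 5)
Note first that this statement is recorded in the paper as an \emph{open conjecture} (Conjecture~5.1 of \cite{LapMin25}): the paper gives no proof, and observes only that it is known when one of $C,D$ is balanced, via \cite{MinLa18} and \cite{LapMin25}. So your proposal is attempting to settle an open problem, and as written it does not.

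The decisive gap is the multiplicity-one claim at the end of the induction. You assert that $\pi(\fm*\fn)$ occurs with multiplicity exactly one in $B=\pi(\fm'')\times\pi(\De)\times\pi(\fn)$, attributing this to ``identifying parabolic induction with the Hall product.'' Hall-algebra structure constants count filtrations of quiver modules and have no direct bearing on composition multiplicities in iterated $p$-adic parabolic induction, which are governed by Kazhdan--Lusztig numerics and are routinely $>1$ in three-fold products; once multiplicity one fails, your closing lattice argument (``a simple of multiplicity one in $B$ that embeds in $B$ and is a subquotient of $A\subseteq B$ lies in $A$'') establishes nothing. Beyond that: (a) the base case $\soc(\pi(\fl)\times\pi(\De))=\pi(\fl*\{\De\})$ is the combinatorial core of the whole conjecture, and you flag it yourself as the ``main obstacle'' while offering only a plan, not a proof, so the induction never starts; (b) you silently identify the operation $*$ on irreducible components of $\Lambda(\did)$, which is defined through short exact sequences of $\Pi$-modules with minimal $\Ex$, with Reineke's generic-extension monoid for the linearly oriented quiver $Q^+$---the vanishing of $\mathrm{Ext}^1_{Q^+}(M_\De,M_{\fm''})$ you check when $\De$ has maximal right endpoint does not by itself give $\fm=\fm''*\{\De\}$ on the $\Pi$-side, where $\Ex$ is controlled by the Crawley--Boevey identity and picks up contributions from the opposite $Q^+$-direction; (c) you take as given that $\pi(\fm*\fn)$ is a subquotient of $\pi(\fm)\times\pi(\fn)$ (``leading term in the Grothendieck ring''), which is itself a nontrivial part of what the conjecture asserts and cannot be assumed.
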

These three conjectures have been proven in \cite{MinLa18} respectively \cite{LapMin25} for a large class of $C,D\in \co.$ To be more precise, the conjectures are proven if either $C$ or $D$ is \emph{balanced}, a combinatorial criterion which we recall in \Cref{S:regular}. For $C,D\in\co,\,i\in \{0,1\}$ we set \[\mathrm{ext}_\Pi^i(C,D)=\min_{x\in C,y\in D}\dim_\C\mathrm{Ext}_\Pi^i(x,y),\]
and note that the set of $x,y$ attaining this minimum is open.

In this paper, we conjecture that one can also recover certain aspects related to intertwining operators from the geometry of $\Lambda(\did)$. 
Let $\pi,\sigma\in\irr^0$ and $s\in \C$. We recall the intertwining operators \[M_{\pi,\sigma}(s)\colon \sigma\abs^s\times\pi\abs^{-s}\ra\pi\abs^{-s}\times\sigma\abs^s\]
defined by the usual integral which converges for $\mathrm{Re}(s)>>0$. One can continue $M_{\pi,\sigma}(s)$ meromorphically to the whole complex plane and we denote by $\LL(\pi,\sigma)$ the order of the pole of the intertwining operator
$M_{\pi,\sigma}(s)$ at $s=0$. Similarly, we let $\alpha(\pi,\sigma)$ the order of zero of
\[M_{\sigma,\pi}(s)\circ M_{\pi,\sigma}(s)\] at $s=0$ and set \[\fd(\pi,\sigma)\coloneq\Lambda(\pi,\sigma)+\Lambda(\sigma,\pi)+\alpha(\pi,\sigma).\]
The order of the pole of the intertwining operator, or its normalized version, see \Cref{S:norint}, has been computed in many special cases. For example, the case where both $\pi$ and $\sigma$ are certain local components of global discrete series automorphic representations has been dealt with by M{\oe}glin and Waldspurger in \cite[§ I.6]{MoeWal89}. In the setting of quantum-algebras intertwining operators have an analogues in so-called $R$-matrices, for whom the order of their poles has been computed in special cases for example by Fujita and Murakami in \cite{FujKot}, by Oh and Scrimshaw in \cite{OhScr}, and  by Kashiwara, Misra, Okado, and Yamada in \cite{KasMisOkYa}.

We present now the main conjecture of the paper.
\begin{conjecture}
    Let $C,D\in \co$. Then
    \[\Lambda(\pi(C),\pi(D))=\hp(D,C),\, \fd(\pi(C),\pi(D))=\ex(D,C).\]
\end{conjecture}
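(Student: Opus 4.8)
The proof strategy I would pursue is to reduce both identities to known results via two independent mechanisms: a reduction to the \emph{balanced} case (where the conjectures of Lapid--M\'inguez on $C*D$ are already theorems), and an induction on the length of the multisegments using the combinatorics of $\rho$-derivatives and the crystal graph structure on $\co$. First I would fix $C\in\co(\did)$, $D\in\co(\die)$ and set $\pi=\pi(C)$, $\sigma=\pi(D)$. The starting point is the standard fact that $\LL(\pi,\sigma)$, the order of the pole of $M_{\pi,\sigma}(s)$ at $s=0$, can be computed from the reducibility points and the composition series of $\sigma\abs^s\times\pi\abs^{-s}$; in the type $A$ setting this order equals the multiplicity-type datum governing how many ``linked'' Zelevinsky segments force reducibility, which on the geometric side should match $\hp(D,C)=\min_{x\in D,y\in C}\dim_\C\Hp(x,y)$. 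So the first genuine step is to establish the base case where one of $C,D$ is a single segment (equivalently $\pi$ or $\sigma$ is a Speh or a Zelevinsky-type representation): here $\hp(D,C)$ can be computed directly from the quiver module structure, and $\LL(\pi,\sigma)$ is classical (M\oe glin--Waldspurger \cite{MoeWal89}), so one checks the two numbers agree.

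The second step is to propagate from the base case using the $*$-operation and the bijection $C\mapsto\pi(C)$. The key input is the chain of conjectures recalled in the excerpt: when $C$ or $D$ is balanced, $\pi(C*D)\hookrightarrow\pi(C)\times\pi(D)$ and the relevant $\mathrm{ext}^1$ vanishing controls irreducibility. I would use this to set up a short exact sequence or a filtration of $\sigma\abs^s\times\pi\abs^{-s}$ whose graded pieces correspond to $\pi(C'*D')$ for sub/quotient components $C',D'$, and track how the pole order of the intertwining operator adds up along the filtration. On the geometric side the analogous additivity is the statement that $\hp$ and $\ex$ behave well under the $*$-operation — concretely, one wants $\hp(D,C)$ to decompose according to a resolution of $C$ (resp.\ $D$) by generic extensions of smaller components. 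Establishing this additivity on both sides, and checking the two decompositions are compatible term-by-term, is the core of the argument; the identity $\fd(\pi(C),\pi(D))=\ex(D,C)$ would then follow by combining the $\LL$-identity applied to the pairs $(C,D)$ and $(D,C)$ with the interpretation of $\alpha(\pi,\sigma)$ as $\ex(D,C)-\hp(D,C)-\hp(C,D)$, i.e.\ the Euler-characteristic relation $\dim\Ex-\dim\Hp-\dim\Hp^{\mathrm{op}}$ read off from the $2$-Calabi--Yau structure of $\Pi$.

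The main obstacle I expect is precisely this additivity/compatibility step: the $*$-operation is defined by generic extension and does not interact transparently with parabolic induction, so controlling the \emph{order} of the pole (not merely reducibility) along a filtration requires knowing that the intertwining operator restricts correctly to each graded piece and that no cancellation occurs — this is where the hypothesis that $C$ or $D$ is balanced will be essential, because it is exactly the regime in which $\pi(C*D)$ sits as a genuine subrepresentation with the expected multiplicity. A secondary difficulty is the computation of $\hp(D,C)$ for general $C,D$: while $\ex(D,C)$ has a known combinatorial formula in terms of the multisegments (via the Aizenbud--Lapid description), the $\hom$-dimension is more delicate and one may need to invoke the $2$-Calabi--Yau property to trade $\Hp$ for $\Ex$ in a dual dimension vector. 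I would therefore structure the paper to first prove the case where $C$ or $D$ is balanced in full, deduce Conjecture~1 (rigidity) and the irreducibility criterion as corollaries, and then handle as many further cases as possible (e.g.\ when $\did$ or $\die$ has small support, or when the multisegments are ``ladders'') by explicit computation, leaving the general case as the stated conjecture.
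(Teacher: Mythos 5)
The statement you are addressing is a \emph{conjecture}, and the paper only establishes it when at least one of $C,D$ is balanced (Theorem \ref{T:main}); your meta-strategy --- prove the balanced case, leave the rest open, and deduce $\fd(\pi(C),\pi(D))=\ex(D,C)$ formally from $\Lambda(\pi(C),\pi(D))=\hp(D,C)$ via the Crawley--Boevey identity / $2$-Calabi--Yau duality --- agrees with what the paper actually does (cf. Proposition \ref{C:conjint}). However, your core technical step does not match the paper's mechanism, and as stated it has a gap.

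You propose to ``set up a short exact sequence or a filtration of $\sigma\abs^s\times\pi\abs^{-s}$ whose graded pieces correspond to $\pi(C'*D')$'' and to ``track how the pole order adds up along the filtration.'' This is not what the paper does, and it is unclear how to make it rigorous: controlling the order of a pole along a composition series of the induced representation is exactly the kind of cancellation problem one cannot easily rule out. The paper instead uses the $\fa$-decomposition into saturated and reduced pieces: for a balanced $\fm$ one picks a basic $\fa$ (\Cref{L:bala}) so that $\pi=\Z(\fm)\hookrightarrow\Z(\fa(\fm))\times\Z(\Dde(\fm))$, and the key Lemma \ref{L:central} gives the additivity $\Lambda(\pi_1,\pi_2)=\Lambda(\Dde(\pi_1),\Dde(\pi_2))+\Lambda(\pi_1,\fa(\pi_2))$. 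The proof of that lemma hinges on two facts your sketch does not supply: (i) Theorem \ref{T:intertwin}(2), that the scalar $\lambda$ in the restriction diagram is nonzero precisely when pole orders agree, so the problem reduces to showing $\lambda\neq 0$; and (ii) the $\square$-irreducibility of $\pi_1$ and $\Dde(\pi_1)$ --- exactly what the balanced hypothesis delivers --- which forces the socle of the ambient induced representation to have multiplicity one, hence $\lambda\neq 0$. The geometric mirror is Lemma \ref{L:indcomp}, where the additivity of $\hp$ uses that $\fa$-saturated and $\fa$-reduced modules form a torsion pair, not a general $*$-decomposition. Finally, your base case and Speh computation are reversed relative to the paper: the induction bottoms out at trivial multisegments, and the Speh formula (\Cref{C:Speh}) is derived \emph{afterwards} from the theorem via best matching functions, not used as an input.
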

For $\did$ and $\die$ dimension vectors we denote
\[(\did,\die)\coloneq\sum_{i\in \ZZ}2\did_i\die_i-\did_i\die_{i+1}-\did_{i}\die_{i-1}.\]
\begin{lemma}[\emph{cf.} \Cref{S:AZ}, \Cref{S:AZ}]
    Let $C\in \co(\did),D\in\co(\die)$. Then we have 
    \begin{enumerate}
        \item $\alpha(\pi(C),\pi(D))=-(\did,\die),\, \Lambda(\pi(C),\pi(D))=\Lambda(\pi(D^*),\pi(C^*)),$
        \item $\hp(C,D)=\hp(D^*,C^*).$
    \end{enumerate}
\end{lemma}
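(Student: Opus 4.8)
The plan is to establish the three assertions separately: the formula for $\alpha$ is a Plancherel-measure computation, while the two identities involving the Aubert--Zelevinsky involution are ``functoriality'' statements, one on the representation side and one on the quiver side.

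\textbf{The identity $\alpha(\pi(C),\pi(D))=-(\did,\die)$.} By Harish-Chandra's theory the composition defining $\alpha(\pi,\sigma)$ acts on $\sigma\abs^{s}\times\pi\abs^{-s}$ by a scalar meromorphic function which, up to a nonzero constant, is the inverse of the Plancherel measure of $\pi\abs^{s}\otimes\sigma$ for the maximal parabolic of $\gl_{n+m}(\Ff)$ with Levi $\gl_n(\Ff)\times\gl_m(\Ff)$; hence $\alpha(\pi,\sigma)$ is minus the order at $s=0$ of that Plancherel measure. By the Langlands--Shahidi formula this measure is a product of Rankin--Selberg $\gamma$-factors, and by the multiplicativity of the latter it depends only on the cuspidal supports $\did$ of $\pi$ and $\die$ of $\sigma$ and factors as a product over pairs of cuspidal constituents. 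Together with the transitivity of intertwining operators this makes $\alpha$ additive in each argument along parabolic induction, so it suffices to compute $\alpha(\abs^{i},\abs^{j})$ inside $\gl_2(\Ff)$ (twisting by the cuspidal of the block if it is not the principal one). Evaluating the resulting ratio of Tate $L$-factors at $s=0$ gives $\alpha(\abs^{i},\abs^{j})=-2\delta_{i,j}+\delta_{|i-j|,1}$, which is exactly the negative of the coefficient of $\did_i\die_j$ in the form $(\did,\die)$; summing over $i,j$ with weights $\did_i\die_j$ yields $\alpha(\pi(C),\pi(D))=-(\did,\die)$. The only thing that needs care is the bookkeeping of normalizations --- which Plancherel measure, whether it enters as $\mu$ or $\mu^{-1}$, and the sign convention in $M_{\sigma,\pi}(s)$; once these are fixed the reduction to $\gl_2$ and the $L$-factor computation are routine.

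\textbf{The identity $\LL(\pi(C),\pi(D))=\LL(\pi(D^{*}),\pi(C^{*}))$.} I would use that the Aubert--Zelevinsky involution is realized by a contravariant cohomological auto-equivalence $\mathbb D$ (Aubert--Schneider--Stuhler duality) of the relevant category of smooth representations which commutes with parabolic induction and with unramified twists and sends each irreducible $\pi$ to $\pi^{*}$. Then $\mathbb D$ carries the source $\sigma\abs^{s}\times\pi\abs^{-s}$ and the target $\pi\abs^{-s}\times\sigma\abs^{s}$ of $M_{\pi,\sigma}(s)$ to the target and the source, respectively, of $M_{\sigma^{*},\pi^{*}}(-s)$ --- the contravariance being precisely what both reverses the order of the two inducing data and replaces $s$ by $-s$ --- so $\mathbb D(M_{\pi,\sigma}(s))$ equals $M_{\sigma^{*},\pi^{*}}(-s)$ up to a scalar holomorphic and invertible at $s=0$. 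Since $\mathbb D$ is an equivalence it preserves the order of the pole at $s=0$, giving $\LL(\pi,\sigma)=\LL(\sigma^{*},\pi^{*})$, and one specializes to $\pi=\pi(C)$, $\sigma=\pi(D)$ using $\pi(C)^{*}=\pi(C^{*})$. The delicate point is to upgrade the compatibility of $\mathbb D$ with induction and with $M_{\pi,\sigma}(s)$ from the level of Grothendieck groups to an honest statement about morphisms, and to control the proportionality scalar near $s=0$; an alternative is to invoke an explicit combinatorial formula for $\LL(\pi,\sigma)$ in terms of the multisegments of $\pi$ and $\sigma$ and check that it is unchanged under simultaneously transposing both multisegments and exchanging them.

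\textbf{The identity $\hp(C,D)=\hp(D^{*},C^{*})$.} I would transport $(-)^{*}$ to a duality of $\Pi$-modules. The preprojective algebra admits a canonical isomorphism $\Pi\cong\Pi^{\mathrm{op}}$ --- the identity on vertices together with the signed exchange $e_i\leftrightarrow f_{i+1}$ of the two arrows of each edge --- under which the $\C$-linear dual $M\mapsto\ho_\C(M,\C)$ becomes a contravariant exact self-equivalence $\Theta$ of the category of finite-dimensional $\Pi$-modules that preserves dimension vectors and, the preprojective relations being $\Theta$-stable, restricts to an involution of each variety $\Lambda(\did)$; it therefore induces an involution on $\co(\did)$, sending a component to the closure of the $\Theta$-image of its generic locus. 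The crux is to identify this induced involution with $(-)^{*}$, that is, to show $\Theta$ takes the generic $\Pi$-module of $C$ to that of $C^{*}$; I would do this by reducing to single segments and to small dimension vectors and matching $\Theta$ against the known behaviour of the $\rho$-derivatives under the Aubert--Zelevinsky involution, or else cite the module-theoretic description of this involution on $\Lambda(\did)$. Granting the identification, contravariance of $\Theta$ gives $\Hp(x,y)\cong\Hp(\Theta y,\Theta x)$ for all $x,y$; running $x$ over the generic locus of $C$ and $y$ over that of $D$, the pairs $(\Theta y,\Theta x)$ run over a dense set of isomorphism classes in $D^{*}\times C^{*}$, and since the minimum defining $\hp$ is attained on such loci we conclude $\hp(C,D)=\hp(D^{*},C^{*})$. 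As in the previous part, identifying $\Theta$ with $(-)^{*}$ on components is the only genuine obstacle; everything else is formal.
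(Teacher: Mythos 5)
Your proposal is correct and follows essentially the same route as the paper, with only packaging differences. For $\alpha$: the paper reduces to the cuspidal case exactly as you do, via the multiplicativity property in \Cref{T:intertwin}~(3) and the base computation in \Cref{L:basecase} (which cites \cite{Dat}); your appeal to Plancherel measures and $\gamma$-factors is the standard proof of that base case. For $\Lambda$: you use a contravariant duality $\mathbb D$ with $\mathbb D(\pi)\cong\pi^*$ that reverses induction order and sends $\abs^s\mapsto\abs^{-s}$; the paper uses the covariant cohomological dual $D$ with $D(\pi)\cong(\pi^*)^\lor$ that fixes induction order and twists, and then invokes the elementary symmetry $\Lambda(\pi,\sigma)=\Lambda(\sigma^\lor,\pi^\lor)$. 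Since $\mathbb D=D\circ(-)^\lor$, these are the same computation. The ``delicate point'' you flag — upgrading compatibility of the duality with $M_{\pi,\sigma}(s)$ to an honest statement on morphisms — is exactly what the paper handles in \Cref{L:aubinv}: it characterizes the Frobenius-reciprocal $M'$ of $M_{\pi,\sigma}(s)$ by its restriction to the $P$-subrepresentation via the Geometric Lemma, and observes that $D$ is compatible with that characterization. For $\hp$: you and the paper both use the isomorphism $\Pi\cong\Pi^{\mathrm{op}}$ and $\C$-linear duality of $\Pi$-modules; the identification of the dual-module involution on $\co$ with $(-)^*$, which you correctly single out as the only real content, is asserted in the paper's Remark~(3) of \Cref{S:CoP} without proof. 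Your plan to verify it (reduce to segments, compare with the behaviour under $\rho$-derivatives, or cite the $\lambda_\pm$ parametrization of \Cref{S:homalg} where $\lambda_-(\fm)=\lambda_+(\fm^*)$ and duality swaps $p_+\leftrightarrow p_-$) is the right one and is, if anything, more explicit than the paper.
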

The identity 
$\fd(\pi,\sigma)=\Lambda(\pi,\sigma)+\Lambda(\sigma,\pi)+\alpha(\pi,\sigma)$ is thus nothing but a (conjectural) reflection of the Crawley-Boevey-identity, see \cite[Lemma 1]{Cra},
\[\ex(C,D)=\hp(C,D)+\hp(D,C)-(\did,\die),\]
where $C\in \co(\did),D\in\co(\die)$.

The main theorem of this paper is the following.
\begin{theorem}
    Let $C,D\in \co$ such that at least one of $C,D$ is balanced. Then 
    \[\Lambda(\pi(C),\pi(D))=\hp(D,C),\, \fd(\pi(C),\pi(D))=\ex(D,C).\]
\end{theorem}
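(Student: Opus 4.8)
\emph{Reduction to a single identity.}
The two displayed equalities are equivalent given the results collected above, so the plan is to prove only
\[\Lambda(\pi(C),\pi(D))=\hp(D,C)\]
for every ordered pair $(C,D)$ in which at least one of $C,D$ is balanced; call this statement $(\star)$. Indeed, since the hypothesis ``$C$ or $D$ balanced'' is symmetric, $(\star)$ applied to $(D,C)$ gives $\Lambda(\pi(D),\pi(C))=\hp(C,D)$; substituting both into $\fd(\pi(C),\pi(D))=\Lambda(\pi(C),\pi(D))+\Lambda(\pi(D),\pi(C))+\alpha(\pi(C),\pi(D))$, using $\alpha(\pi(C),\pi(D))=-(\did,\die)$ from the Lemma and the evident symmetry $(\die,\did)=(\did,\die)$, and comparing with the Crawley--Boevey identity $\ex(D,C)=\hp(D,C)+\hp(C,D)-(\did,\die)$ of \cite{Cra}, we obtain $\fd(\pi(C),\pi(D))=\ex(D,C)$. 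I would also record two preliminary facts: that single-segment components are balanced, and that the class of balanced components is stable under the Aubert--Zelevinsky involution $(-)^*$. Combined with the Lemma's $\Lambda(\pi(C),\pi(D))=\Lambda(\pi(D^*),\pi(C^*))$ and $\hp(C,D)=\hp(D^*,C^*)$, the latter shows $(\star)$ for $(C,D)$ is equivalent to $(\star)$ for $(D^*,C^*)$, giving the freedom in the induction below to work with whichever end of a multisegment is convenient.

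\emph{The inductive scheme.}
I would establish $(\star)$ by induction on the total size $|\did|+|\die|$ of the two underlying multisegments. The base case occurs when $\pi(C)$ and $\pi(D)$ are both essentially square-integrable, $\pi(C)=\mathrm{St}(\Delta_1)$, $\pi(D)=\mathrm{St}(\Delta_2)$; there $\Lambda(\mathrm{St}(\Delta_1),\mathrm{St}(\Delta_2))$ is computed directly by the classical rank-one analysis (Gindikin--Karpelevich together with the M{\oe}glin--Waldspurger description of poles for segment representations, \cite{MoeWal}), while on the other side $\hp(D,C)=\dim_{\C}\Hp(x_{\Delta_2},x_{\Delta_1})$ is read off from the combinatorics of the interval modules $x_{\Delta_i}$ of $\Pi$, and the two values are seen to coincide. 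For the inductive step one of $C,D$ is balanced and is not a single segment, say $D$; pick an extremal segment $\Delta$ of the multisegment of $D$ — for instance the one supported on the largest available cuspidal line, as in the M{\oe}glin--Waldspurger algorithm — so that the deleted multisegment yields a balanced component $D'$ with $C_\Delta*D'=D$ and $\pi(D)\hookrightarrow\mathrm{St}(\Delta)\times\pi(D')$. The strategy is then to show that both sides of $(\star)$ satisfy the \emph{same} recursion relating the $(C,D)$ quantity to the $(C,D')$ quantity and a contribution depending only on $\Delta$ and $C$; the pair $(C,D')$ is handled by the inductive hypothesis, and the $\Delta$-contribution, $\hp(C_\Delta,C)=\Lambda(\pi(C),\mathrm{St}(\Delta))$, is likewise covered by the inductive hypothesis (the pair $(C,C_\Delta)$ is strictly smaller, as $D$ is not a single segment, and $C_\Delta$ is balanced).

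\emph{The geometric recursion.}
On the quiver side I would compute $\hp(D,C)$ from $\hp(D',C)$ and $\hp(C_\Delta,C)$ by running the long exact sequences for $\Hp(-,x)$ and $\Ex(-,x)$, with $x\in C$ generic, attached to a short exact sequence of $\Pi$-modules realizing the generic extension $C_\Delta*D'=D$. The crucial input is that this sequence degenerates so as to yield the expected additive formula; this is precisely where the balanced hypothesis enters, through the results of \cite{MinLa18}, \cite{LapMin25} in the balanced case — the embedding $\pi(C_\Delta*D')\hookrightarrow\pi(C_\Delta)\times\pi(D')$ together with the accompanying control of the relevant $\Ex$-groups forces the connecting homomorphisms to vanish. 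The same bookkeeping, together with the Crawley--Boevey identity, takes care of the $\ex$-type quantities needed to transfer between the two equalities of the theorem.

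\emph{The analytic recursion, and the main obstacle.}
On the analytic side I would use the cocycle relations to factor the standard intertwining operator $M_{\pi(C),\pi(D)}(s)$ through the induced representation $\mathrm{St}(\Delta)\abs^{s}\times\pi(D')\abs^{s}\times\pi(C)\abs^{-s}$, expressing it (up to canonical identifications) as a composition of operators built from $M_{\pi(C),\mathrm{St}(\Delta)}(s)$ and $M_{\pi(C),\pi(D')}(s)$; passing to the normalized operators absorbs the normalizing factors, whose zero and pole orders are exactly the $(\did,\die)$-type contributions already accounted for by $\alpha$, and reduces $(\star)$ to a statement about the order of pole at $s=0$ of this composition. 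The factorization gives immediately the upper bound $\Lambda(\pi(C),\pi(D))\le\Lambda(\pi(C),\mathrm{St}(\Delta))+\Lambda(\pi(C),\pi(D'))$, whose right-hand side equals $\hp(D,C)$ by the inductive hypothesis and the geometric step. The main obstacle — the heart of the proof — is the matching lower bound: one must rule out any cancellation of leading Laurent coefficients, equivalently show that the composition of normalized operators, restricted to the subrepresentation $\pi(D)\abs^{s}\times\pi(C)\abs^{-s}$, genuinely maps into $\pi(C)\abs^{-s}\times\pi(D)\abs^{s}$ and is non-degenerate to the predicted order. I expect this to rest once more on the balanced-case structure theory (identification of the socle and cosocle of $\pi(C)\times\pi(D)$ and the $C*D$-embedding), combined with an explicit non-vanishing argument for the leading term — most naturally obtained by pairing against a highest Bernstein--Zelevinsky derivative or a suitable Jacquet module of $\pi(D)\times\pi(C)$, or by deforming in a multi-parameter family and invoking generic irreducibility. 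Once the analytic and geometric recursions are shown to have the same shape and the base case is checked, the induction closes, $(\star)$ follows, and with it the theorem.
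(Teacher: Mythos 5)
Your high‑level architecture is the right one and matches the paper: reduce the $\fd=\ex$ identity to $\Lambda=\hp$ via the Crawley--Boevey identity and the symmetry $\alpha(\pi(C),\pi(D))=-(\did,\die)$, establish closure under the Aubert--Zelevinsky involution, and then run an induction whose inductive step compares a geometric long‑exact‑sequence recursion with an analytic factorization of the intertwining operator. You also correctly identify that the whole weight of the argument falls on showing that neither recursion has a ``correction term'' (vanishing of the relevant $\Ex$ on the geometric side, non‑cancellation of the leading Laurent coefficient on the analytic side). This is precisely what the paper's Lemma~\ref{L:indcomp} and Lemma~\ref{L:central} encode.

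The gap is in the choice and direction of the decomposition. You peel one extremal segment $\Delta$ from the multisegment of the balanced component (you call it $D$) and hope that the sequence $0\ra\Hp(C_\Delta,y)\ra\Hp(x,y)\ra\Hp(D',y)\ra\Ex(C_\Delta,y)$, with $y\in C$, breaks. That requires $\Ex(C_\Delta,C)=0$ — a statement about $C_\Delta$ against the \emph{other}, possibly non‑balanced, component $C$. Nothing in your setup supplies this: the balanced hypothesis on $D$ controls extensions of $C_\Delta$ against $D$, not against $C$. On the analytic side the same issue reappears: to rule out cancellation you ultimately need $\pi(C)\times\Z(\fa')$ to be irreducible (so that the socle is irreducible and multiplicity one), and again this is a condition involving $C$, not $D$. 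The paper resolves this by choosing the basic multisegment $\fa$ from the multisegment $\fm$ with $C(\fm)=C$ — i.e.\ from the second argument of $\hp(D,C)$, equivalently the first argument of $\Lambda(\pi(C),\pi(D))$ — and then applies \cite[Lemma 8.2]{LapMin25} (Lemma~\ref{L:bala}) to guarantee simultaneously that $C$ strongly commutes with every subcomponent of $C(\fa)$, that the $\fa$‑decomposition is non‑trivial, and that $\Dde(C)$ is still balanced. The resulting recursion $\hp(D,C)=\hp(\Dde(D),\Dde(C))+\hp(\fa(D),C)$ shrinks \emph{both} arguments (note the $\Dde(C)$ on the right), and Lemma~\ref{L:central} gives the exactly parallel recursion for $\Lambda(\pi(C),\pi(D))$. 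Moreover the $\fa$‑saturated part $\fa(D)$ is in general a \emph{sum} of segments sharing a common endpoint, not a single extremal segment, so the one‑segment peeling also misses the shape of the decomposition. Without the $\fa$‑saturated/$\fa$‑reduced machinery and Lemma~\ref{L:bala} to produce the needed strong commutation with $C$, the induction as you have set it up does not close.
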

Our proof builds on the techniques developed in \cite{LapMin25}, in particular their use of \emph{saturated} and \emph{reduced} representations and modules.
We also discuss some straightforward consequences of the conjecture and verify it also in the case where $C=D$ corresponds to Leclerc's example, \emph{cf.} \cite{Lec03}, which is the smallest example of non-rigid component $C$ such that $\ex(C,C)=0$. 

In \Cref{S:norint} we quickly show how one can rephrase the conjecture (and the theorem) for intertwining operators normalized by Rankin-Selberg factors, \emph{cf.} \cite{JacPiaSha}, see also \cite{MoeWal89} and \cite{Sha83}. In this case, the order is closely linked to the behavior of Coxeter functors in this setting. We also give a fast algorithm on how to compute the order of these poles in certain cases via \emph{best matching functions}, \emph{cf.} \cite[Remark 2.2]{LapMin25}. In particular, in \Cref{C:Speh} we can give an explicit formula for the order of the poles of the intertwining operator of two so-called \emph{Speh}-representations, which play an important role in the classification of the unitary dual of $\gl_n(\Ff)$ and classical groups.
\subsection*{Acknowledgements}
I am grateful to Keyu Wang and Jianrong Li for explaining to me how to compute poles of $R$-matrices appearing in the study of quantum algebras in certain cases 
as well as to Alberto M{\'i}nguez for his continuing support and encouragement. Finally, I want to thank Erez Lapid for providing valuable feedback to earlier versions of this paper.
This work has been supported by the projects PAT4628923 and PAT4832423 of the Austrian Science Fund (FWF).
\counterwithin{theorem}{section}
\counterwithin{conjecture}{section}
\section{Homological algebra}\label{S:homalg}
In this section we review briefly several aspects of \cite{AizLap} and \cite{LapMin25}.

Let $Q^\pm$ be the quiver with vertices $I=\ZZ$ and arrows from $i\ra i\pm1$. We denote  by
\[\gdim(Q)\coloneq\{\did\in \NN^\ZZ:\text{almost all entries of }\did\text{ are }0\}.\]
A representation of $Q^\pm$ is a graded, finite dimensional $\C$-vector space $V=\bigoplus_{i\in \ZZ}V_i$ and a map $T=(T_i\colon V_i\ra V_{i\pm 1})_{i\in \ZZ}\colon V\ra V$ of degree $\pm 1$.
We call \[\gdim(V,T)=(\ldots,\dim_\C V_i,\dim_\C V_{i+1},\ldots)\] the dimension vector of $(V,T)$ and given $\did\in \gdim(Q)$, we let $E^\pm(\did)$ be the spaces of representations of $Q^\pm$ with dimension vector $\did$, on which
\[G_\did\coloneq \bigtimes_{i\in \ZZ}\gl_{\did_i}(\C)\] acts via conjugation. 
The sets of orbits are then denoted by $\oo_\pm(\did)$.

We define
\[\langle-,-\rangle_+,\, \langle-,-\rangle_-,\, (-,-)\colon \gdim(Q)\times\gdim(Q)\ra \ZZ\] via \[\langle\did,\die\rangle_+=\sum_{i\in \ZZ}\did_i\die_i-\did_{i}\die_{i+1},\, \langle\did,\die\rangle_-=\langle\die,\did\rangle_+\]
and \[(-,-)=\langle-,-\rangle_++\langle-,-\rangle_-.\]
We note that for $x,y$ representations of $Q^\pm$,
\[\dim_\C\mathrm{Hom}_{Q^\pm}(x,y)-\dim_\C\mathrm{Ext}_{Q^\pm}^1(x,y)=(\gdim(x),\gdim(y))_\pm,\] see for example \cite[Eq. 2.3]{AizLap}.
For later use we denote by \[\alpha_+(x,y)\coloneq \dim_\C\mathrm{Hom}_{Q^+}(x,y)-\dim_\C\mathrm{Ext}_{Q^+}^1(y,x)\] and hence 
\[\alpha_+(x,y)+\alpha_+(y,x)=(\gdim(x),\gdim(y)).\]
We let $\mq$ be the set of formal finite sums of segments $[a,b],\, a\le b\in \ZZ$ and define $\mu_+([a,b]) $ as the representation of $Q^+$ with vector space $V$ being of dimension $1$ in degrees $a,\ldots,b$ and $0$ otherwise. The map $T$ is given by a non-zero scalar in degrees $a,\ldots,b-1$ and the $0$-map otherwise. 
Similarly, we define $\mu_-([a,b]) $ with the same underlying vector space and $T$ as a non-zero scalar in degrees $a+1,\ldots,b$ and the $0$-map otherwise.
For $\fm=\De_1+\ldots+\De_k\in\mq$, we define
\[\mu_\pm(\fm)=\bigoplus_i\mu_\pm(\De_i).\]We let $\gdim(\fm)=\gdim(\mu_+(\fm))$ and for $\did\in \gdim(Q)$, we let \[\mq(\did)\coloneq\{\fm\in \mq:\gdim(\fm)=\did\}.\] Then there exist bijections, \emph{cf.} \cite[§12]{Lus91},
\[\Z_\pm\colon \mq(\did)\ra \oo_\pm(\did),\, \fm\mapsto G_\did\cdot\mu_\pm(\fm).\]
For a dimension vector $\did$ we let
\[\Lambda(\did)\coloneq\{(S,T)\in E^+(\did)\times E^-(\did):T_{i+1}S_i=S_{i-1}T_i\text{ for all }i\in \ZZ\},\, \Lambda\coloneq\bigcup_{\did\in\gdim(Q)}\Lambda(\did). \] be the corresponding characteristic variety introduced by Lusztig. Note that $\Lambda(\did)$ is equipped with a natural action of $G_\did$. We also recall the doubled quiver $\overline{Q}$, whose vertices are again indexed by $\ZZ$ but with arrows  $i\ra i\pm 1$ for all $i\in \ZZ$.
We let $\Pi$ be the quotient of the path-algebra of $\overline{Q}$ by the relation $f_{i+1}e_i=e_{i-1}f_i$, where $e_i$ denotes the path $i\ra i+1$ and $f_i$ the path $i\ra i-1$. The space $\Lambda$ is then nothing but the space of $\Pi$-modules, and $\Lambda(\did)$ the space of $\Pi$-modules with dimension vector $\did$. 
Let us note that for all $x\in \Lambda(\did),\, y\in\Lambda(\die) $ we have the Crawley-Boevey-identity, see \cite[Lemma 1]{Cra},
\[\dim_\C\Ex(x,y)=\dim_\C\Hp(x,y)+\dim_\C\Hp(y,x)-(\did,\die).\]
Let $\co(\did)$ denote the set of irreducible components of $\Lambda(\did)$ and \[\co\coloneq \bigcup_{\did\in\gdim(Q)}\co(\did).\]

For $C_1,C_2\in \co$, we define
$\mathcal{E}(C_1,C_2)$ as the set of $x\in \Lambda$ such that there exist $x_1\in C_1,x_2\in C_2$, a short exact sequence \[0\ra x_2\ra x\ra x_1\ra 0\] and 
the dimension of \[\Ex(x_1,x_2)\] is minimal.
\begin{theorem}[{\cite[Theorem 3.1]{AizLap}}]
The map $*\colon \co\times\co\ra \co$,
    \[C_1*C_2\coloneq\overline{\mathcal{E}(C_1,C_2)},\]
     is well defined.
\end{theorem}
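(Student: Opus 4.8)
The plan is to exhibit $\overline{\mathcal E(C_1,C_2)}$ as the closure of the image of an irreducible \emph{extension variety} and then to show that its dimension equals $\dim E^+(\did+\die)$; since $\Lambda(\did+\die)$ is equidimensional of that dimension --- being a Lagrangian subvariety of the cotangent bundle, by Lusztig --- this forces $\overline{\mathcal E(C_1,C_2)}$ to be one of its irreducible components, hence an element of $\co(\did+\die)$. Since dimension vectors have finite support, everything takes place over the preprojective algebra of a finite type-$A$ subquiver, so I may assume $\Pi$ is finite-dimensional and $\Lambda(\mathbf f)$, $\mathbf f:=\did+\die$, is the affine variety of $\Pi$-module structures of dimension vector $\mathbf f$ on a fixed graded space $\underline V$.

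First I would set up the extension variety. With $C_1\in\co(\did)$, $C_2\in\co(\die)$ fixed --- each an irreducible, $G$-stable subvariety of the relevant $\Lambda$ --- put
\[
\mathcal Z=\{(x,U):x\in\Lambda(\mathbf f),\ U\subseteq\underline V\text{ a graded }x\text{-submodule},\ \gdim U=\die,\ x|_U\in C_2,\ x/U\in C_1\},
\]
equipped with $q\colon\mathcal Z\to\Lambda(\mathbf f)$, $(x,U)\mapsto x$. Fixing one graded subspace $U_0$ of dimension vector $\die$ and using that $G_{\mathbf f}$ acts transitively on the Grassmannian $\mathrm{Gr}_\die(\underline V)$, one gets $\mathcal Z\cong G_{\mathbf f}\times_P\mathcal Z_{U_0}$ with $P=\mathrm{Stab}_{G_{\mathbf f}}(U_0)$ and $\mathcal Z_{U_0}$ the variety of block-upper-triangular module structures; the latter fibres over $C_2\times C_1$ via $x\mapsto(x|_{U_0},x/U_0)$, the fibre over $(x_2,x_1)$ being the linear space $Z^1(x_1,x_2)$ of extension cocycles, of dimension $\dim_\C\Ex(x_1,x_2)+\sum_i\did_i\die_i-\dim_\C\Hp(x_1,x_2)$, equivalently $\dim_\C\Hp(x_2,x_1)-(\did,\die)+\sum_i\did_i\die_i$ by the Crawley--Boevey identity. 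By construction $\mathcal E(C_1,C_2)=q(\mathcal Z_V)$, where $\mathcal Z_V\subseteq\mathcal Z$ is the part with $(x|_U,x/U)$ in the dense open $V\subseteq C_2\times C_1$ on which $\dim_\C\Ex(x/U,x|_U)$ attains its minimum $\ex(C_1,C_2)$.

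Next I would isolate the ``main'' piece. On the dense open $V^\circ:=V\cap\{\dim_\C\Hp(x_2,x_1)=\hp(C_2,C_1)\}$ of $C_2\times C_1$ all of $\dim_\C\Ex(x_1,x_2),\dim_\C\Hp(x_1,x_2),\dim_\C\Hp(x_2,x_1)$ attain their minima simultaneously --- this uses $\ex(C_1,C_2)=\hp(C_1,C_2)+\hp(C_2,C_1)-(\did,\die)$, which follows from the Crawley--Boevey identity at a point of $V^\circ$ --- so $Z^1(x_1,x_2)$ has constant dimension over $V^\circ$ and $\mathcal Z^\circ:=G_{\mathbf f}\times_P(\mathcal Z_{U_0}|_{V^\circ})$ is irreducible, with
\[
\dim\mathcal Z^\circ=\textstyle\sum_i\did_i\die_i+\dim C_1+\dim C_2+\bigl(\hp(C_2,C_1)-(\did,\die)+\sum_i\did_i\die_i\bigr),
\]
using $\dim\mathrm{Gr}_\die(\underline V)=\sum_i\did_i\die_i$ and $\dim C_j=\dim E^+(\did)$, resp.\ $\dim E^+(\die)$, by Lusztig. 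The map $q|_{\mathcal Z^\circ}$ is dominant onto an irreducible subvariety of $\Lambda(\mathbf f)$; its fibre over a generic $x$ is the set of those $\die$-dimensional submodules $U$ of $x$ with $(x|_U,x/U)\in V^\circ$, and at any such $U$ the tangent space to the submodule variety is $\Hp(x|_U,x/U)$, of dimension $\hp(C_2,C_1)$ since $(x|_U,x/U)\in V^\circ$. Hence that generic fibre has dimension at most $\hp(C_2,C_1)$, so $\dim\overline{q(\mathcal Z^\circ)}\ge\dim\mathcal Z^\circ-\hp(C_2,C_1)$, and a direct computation from the displayed formula together with $\dim E^+(\mathbf f)=\dim E^+(\did)+\dim E^+(\die)+2\sum_i\did_i\die_i-(\did,\die)$ shows this lower bound equals $\dim E^+(\mathbf f)$. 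By equidimensionality of $\Lambda(\mathbf f)$ the reverse inequality also holds, so $\overline{q(\mathcal Z^\circ)}$ is an irreducible component of $\Lambda(\mathbf f)$.

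The remaining point --- and the main obstacle --- is the identity $\overline{\mathcal E(C_1,C_2)}=\overline{q(\mathcal Z^\circ)}$. The inclusion $\supseteq$ is immediate from $\mathcal Z^\circ\subseteq\mathcal Z_V$. For $\subseteq$ one must control $q(\mathcal Z_V\setminus\mathcal Z^\circ)$, the contribution of subquotient types in $V\setminus V^\circ$, over which the cocycle spaces $Z^1(x_1,x_2)$ jump in dimension. Here I would invoke the theory of generic extensions of Reineke and Bongartz, placed in the present (preprojective-algebra) generality by Crawley--Boevey--Schr\"oer: the generic extension of a generic $x_1\in C_1$ by a generic $x_2\in C_2$ is a single module up to isomorphism, characterised precisely by the minimality of $\dim_\C\Ex(x_1,x_2)$, and every module appearing in $\mathcal E(C_1,C_2)$ lies in the orbit closure of such a generic extension; the jump of $Z^1$ over $V\setminus V^\circ$ is compensated by a matching jump in the fibre dimension of $q$ (again via the identification of the tangent space to the submodule variety with $\Hp$), so $q(\mathcal Z_V\setminus\mathcal Z^\circ)\subseteq\overline{q(\mathcal Z^\circ)}$. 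Granting this, $\overline{\mathcal E(C_1,C_2)}=\overline{q(\mathcal Z^\circ)}\in\co(\mathbf f)$, so $*$ is well defined --- and indeed, after the reduction to the finite-dimensional algebra $\Pi$, the statement is a special case of the theorem of Crawley--Boevey--Schr\"oer that generic extension is a well-defined operation on the irreducible components of module varieties over an arbitrary finite-dimensional algebra.
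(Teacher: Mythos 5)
The paper does not prove this statement --- it simply cites \cite[Theorem 3.1]{AizLap} --- so your argument has to be judged on its own merits and against the standard argument in that setting. Your approach via the extension variety and dimension counting is the right one and, as far as I know, is essentially how this is done. But the step you flag as ``the remaining point --- and the main obstacle'' is in fact vacuous, and the appeal to Crawley--Boevey--Schr\"oer you use to plug it is both unnecessary and misattributed.

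Here is the point you are missing: $V=V^\circ$. The Crawley--Boevey identity $\dim_\C\Ex(x_1,x_2)=\dim_\C\Hp(x_1,x_2)+\dim_\C\Hp(x_2,x_1)-(\did,\die)$ holds at \emph{every} pair $(x_1,x_2)\in C_1\times C_2$, not just generically. Since $(\did,\die)$ is constant, minimizing $\dim_\C\Ex(x_1,x_2)$ over $C_1\times C_2$ is the same as minimizing the sum $\dim_\C\Hp(x_1,x_2)+\dim_\C\Hp(x_2,x_1)$. Each summand is upper semicontinuous, so each is bounded below by its own minimum, and both minima are attained simultaneously on a dense open set (the intersection of two dense opens). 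Hence the minimum of the sum equals $\hp(C_1,C_2)+\hp(C_2,C_1)$, and it is attained \emph{exactly} where both summands attain their individual minima: if one of them exceeded its minimum at a point where the sum is minimal, the other would have to dip below its minimum, which semicontinuity forbids. Therefore $V=\{\dim_\C\Ex\text{ minimal}\}=\{\dim_\C\Hp(x_1,x_2)=\hp(C_1,C_2)\}\cap\{\dim_\C\Hp(x_2,x_1)=\hp(C_2,C_1)\}=V^\circ$, and $\mathcal Z_V=\mathcal Z^\circ$. In particular $\mathcal Z_V$ is irreducible --- it is $G_{\mathbf f}$ induced from a sub-vector-bundle of constant rank (namely $Z^1$) over the irreducible base $V$ --- so $q(\mathcal Z_V)=\mathcal E(C_1,C_2)$ is irreducible, and your dimension count together with the equidimensionality of $\Lambda(\mathbf f)$ (Lusztig) finishes the proof. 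There is no residual contribution from $\mathcal Z_V\setminus\mathcal Z^\circ$ to worry about.

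Two further remarks. First, note that your formula $\dim_\C Z^1(x_1,x_2)=\dim_\C\Hp(x_2,x_1)-(\did,\die)+\sum_i\did_i\die_i$ is itself a pointwise consequence of Crawley--Boevey (the cokernel of the cochain differential $C^1\to C^2$ is $\mathrm{Ext}^2_\Pi(x_1,x_2)\cong\Hp(x_2,x_1)^*$ by CY2), so the constancy of the fibre dimension over $V$ was already in front of you. Second, the closing claim that the statement is a special case of a Crawley--Boevey--Schr\"oer theorem on generic extensions for arbitrary finite-dimensional algebras is not correct: their theorem concerns generic \emph{direct sum} decompositions, not minimal-Ext extensions, and no such general statement can hold --- the argument genuinely uses that $\Lambda(\mathbf f)$ is equidimensional of dimension $\dim E^+(\mathbf f)$, which is special to Lusztig's nilpotent variety (more generally to the $2$-Calabi--Yau setting) and fails for a generic finite-dimensional algebra.
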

The components $C_1,C_2\in \co$ are said to commute strongly if $C_1*C_2=C_1\oplus C_2$.
We define for $C_1,C_2\in \co$
\[\hp(C_1,C_2)=\min_{x_1\in C_1,x_2\in C_2}\dim_\C\Hp(x_1,x_2),\]\[ \ex(C_1,C_2)=\min_{x_1\in C_1,x_2\in C_2}\dim_\C\Ex(x_1,x_2).\]
An element $x\in \Lambda$ is called \emph{rigid} if $\ex(x,x)=0$ and $C\in \co$ is called rigid of it contains a rigid element. 
We denote the natural maps $p_\pm\colon \Lambda(\did)\ra E^\pm(\did)$ and recall the classical bijections \[\lambda_\pm\colon\mq\ra \co(\did),\: \lambda_\pm(\fm)\coloneq \overline{p_\pm^{-1}(\Z_\pm(\fm))}.\]
We will denote by $C\coloneq \lambda_+^{-1}$.
Recall that the involution $(-)^*\coloneq \lambda_-^{-1}\lambda_+\colon\mq\ra\mq$ is nothing but the Aubert-Zelevinsky involution, see \cite{MoeWal}, \cite{Zel}, \cite{Aub}.
\subsection{Coxeter functors}\label{S:coxeter}
We refer for the contents of this section to \cite[§9]{AizLap}, see also \cite[§2.3]{LapMin25}.
Recall for $\Pi$-modules $x,y$ the map
\[\mathcal{T}_{x,y}\colon  \mathrm{Hom}_{Q^+}(x,y)\ra\mathrm{Hom}_{Q^+}(x,\tau(y))\cong \mathrm{Ext}_{Q^+}^1(x,y)^*,\]
where $\tau$ is the Coxeter-functor of $Q^+$. To be more precise, $\tau$ sends a representation $(V,T)$ of $Q^+$ to $(V',T')$, where $V_i'=V_{i-1},T_i'=T_{i-1}$.
There exists an exact sequence, \emph{cf.} \cite[§8]{GeisLecSchr07}, \cite[Proposition 9.1]{AizLap},
\[\begin{tikzcd}
    0\arrow[r]&\Hp(x,y)\arrow[r]&\mathrm{Hom}_{Q^+}(x,y)\arrow[r,"\mathcal{Tau}_{x,y}"]&\mathrm{Ext}_{Q^+}^1(x,y)^*
    \arrow[r]&\Ex(x,y)\arrow[d]\\ &0&\arrow[l]\Hp(x,y)^*&\arrow[l]\mathrm{Ext}_{Q^+}^1(x,y)^*&\arrow[l,"\mathcal{Tau}_{x,y}^*"]\mathrm{Ext}_{Q^+}^1(x,y)^*
\end{tikzcd}\]
In particular,
\[\dim_\C\Hp(x,y)=\dim_\C\mathrm{ker}(\mathcal{Tau}_{x,y})\]
and
\[\dim_\C\Hp(x,y)-\alpha_+(x,y)=\dim_\C\mathrm{coker}(\mathcal{Tau}_{x,y}).\]
For $C,D\in\co$ we denote by $\dim_\C\mathrm{coker}(\mathcal{Tau}_{C,D})$ the minimum of the attained dimensions.
\subsection{Saturated and reduced components}
We call $\fa\in \mq$ \emph{basic} if it is of one of the following forms.
\[\fa=[a,b]\text{ or }\fa=[a,a]+\ldots+[b,b].\]
A multisegment $\fm$ is called $\fa$-saturated if it is of the form
\[\fm=[a_1,b]+\ldots+[a_k,b],\, \ain{a_i}{a}{b}\]
in the first case, and of the form
\[\fm=[a,a]+\ldots+[b_1,b_1]+\ldots+[a,a]+\ldots+[b_k,b_k],\, \ain{b_i}{a}{b}\] in the second case.
The component $C(\fa)$ is rigid and we pick a rigid $x_\fa\in C(\fa)$. A subcomponent of $C(\fa)$ is any $C'\in \co$ containing a submodule of $C(\fa)$.

An element $x\in \Lambda$ is called $\fa$-reduced if 
\[\Hp(x,x_\fa)=0\] for some $x_\fa\in C(\fa)$ and $\fa$-saturated if for all submodules $x'$ of $x$
\[\Hp(x',x_\fa)\neq 0\] for some $x_\fa\in C(\fa)$.
\begin{lemma}[{\cite[Lemma 7.5]{LapMin25}}]
    Any $\fa$-saturated module $x$ is a direct sum of irreducible submodules of $x_\fa$, which are all rigid.
\end{lemma}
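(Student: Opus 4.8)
The plan is to describe $x_\fa$ completely explicitly, to use the saturation hypothesis to embed $x$ into a direct sum of copies of $x_\fa$, and then to read off the decomposition from the very rigid structure of such an embedding.

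\emph{Step 1 (the module $x_\fa$).} When $\fa=[a,b]$, the description $C(\fa)=\lambda_+([a,b])=\overline{p_+^{-1}(\Z_+([a,b]))}$, together with the fact that $p_+^{-1}(\mu_+([a,b]))$ is a single point, identifies $x_\fa$ (up to the $G_\did$-action) with the $\Pi$-module which is one-dimensional in degrees $a,\dots,b$, carries the string action of $Q^+$, and has vanishing $Q^-$-action. When $\fa=[a,a]+\dots+[b,b]$ one has $\mu_+(\fa)=0$, so $C(\fa)=E^-(\did)$ with $\did$ the indicator of $\{a,\dots,b\}$, and $x_\fa$ is its generic point: one-dimensional in degrees $a,\dots,b$, vanishing $Q^+$-action, string action of $Q^-$. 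In either case $x_\fa$ is uniserial; let $0=N_0\subsetneq N_1\subsetneq\dots\subsetneq N_r=x_\fa$ be its chain of submodules. Each $N_i$ is an interval module with $\mathrm{End}_\Pi(N_i)=\C$, and a direct computation gives $(\gdim N_i,\gdim N_i)=2$, so $\dim_\C\Ex(N_i,N_i)=2\dim_\C\Hp(N_i,N_i)-(\gdim N_i,\gdim N_i)=0$ by Crawley--Boevey: each $N_i$ is rigid. Since the $N_i$ are exactly the submodules of $x_\fa$, the clause ``which are all rigid'' is automatic once we know the indecomposable summands of $x$ lie among the $N_i$.

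\emph{Step 2 (embedding $x$ into a power of $x_\fa$).} Let $R=\bigcap_{f\in\Hp(x,x_\fa)}\ker f\subseteq x$ be the $x_\fa$-reject of $x$, so that $x/R\hra x_\fa^{\oplus d}$ with $d=\dim_\C\Hp(x,x_\fa)$. I claim $\Hp(R,x_\fa)=0$; granting this, $\fa$-saturation of $x$ forces $R=0$, hence $x\hra x_\fa^{\oplus d}$. To prove the claim one computes $\Hp(z,x_\fa)$ for an arbitrary $\Pi$-module $z$: since $x_\fa$ is a thin uniserial module with simple socle, a homomorphism $z\to x_\fa$ is determined by its component at the socle vertex, which must vanish on an explicit ``radical'' subspace assembled from the images of the structure maps of $z$ via the relations $f_{i+1}e_i=e_{i-1}f_i$; one then checks that inside $R$ this radical subspace fills up the whole socle-vertex space, so no nonzero homomorphism $R\to x_\fa$ survives. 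This is the technical heart, and it is exactly here that the preprojective relation is used in an essential way.

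\emph{Step 3 (conclusion).} In Case 1 the $\Pi$-module $x_\fa^{\oplus d}$ has vanishing $Q^-$-action and $Q^+$-structure a chain of isomorphisms $\C^d\xrightarrow{\sim}\dots\xrightarrow{\sim}\C^d$ in degrees $a,\dots,b$; hence a sub-$\Pi$-module is the same as a flag $W_a\subseteq W_{a+1}\subseteq\dots\subseteq W_b\subseteq\C^d$, which, choosing an adapted basis, decomposes canonically into a direct sum of interval modules, each a submodule of $x_\fa$. Therefore $x\cong\bigoplus_i N_i^{\oplus k_i}$ is a direct sum of rigid indecomposable submodules of $x_\fa$, as claimed. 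Case 2 is entirely dual, replacing $Q^+$ by $Q^-$ and socles by cosocles.

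\emph{Main obstacle.} The only genuinely delicate point is the vanishing $\Hp(R,x_\fa)=0$ in Step 2: the analogous statement fails for a general target (the reject need not have no maps back to it), and it is precisely the combination ``$\fa$ basic, so $x_\fa$ is thin uniserial'' together with a careful bookkeeping of the preprojective relations that makes it work; carrying this out uniformly in the length of $\fa$, and handling the two shapes of basic $\fa$ in parallel, is where the real effort goes.
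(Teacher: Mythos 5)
The paper does not reproduce an argument for this lemma — it simply cites \cite[Lemma 7.5]{LapMin25} — so there is no in-text proof to compare against; I can only assess your sketch on its own terms, and it is correct. The part you flag as the technical heart (Step 2) does go through. Concretely, for $\fa=[a,b]$ one finds $\Hp(z,x_\fa)\cong(z_b/W(z))^*$, where iterating the preprojective relation $e_i f_{i+1}=f_{i+2}e_{i+1}$ yields the identity $e_{b-1}\cdots e_j\,f_{j+1}=f_{b+1}\,e_b\cdots e_{j+1}$ as maps $z_{j+1}\to z_b$, which collapses the naive constraint space to $W(z)=\mathrm{im}(f_{b+1}^z)+\mathrm{im}(e_{b-1}^z\cdots e_{a-1}^z)$. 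Since $W$ only involves the boundary components $z_{a-1}$ and $z_{b+1}$, which are unchanged in passing from $x$ to the reject $R$ (these vertices lie outside $[a,b]$, so $R_{a-1}=x_{a-1}$ and $R_{b+1}=x_{b+1}$), one gets $W(R)=W(x)=R_b$ and hence $\Hp(R,x_\fa)=0$, which is exactly what you need; Case 2 is the mirror image with $a$ and $b$ swapped and $e,f$ interchanged. Steps 1 and 3, including the Crawley--Boevey rigidity computation and the identification of $\Pi$-submodules of $x_\fa^{\oplus d}$ with flags, are also fine. One cosmetic point: the word ``irreducible'' in the lemma statement should be read as ``indecomposable'' (the interval submodules $N_i$ are not simple in general), which is the reading you implicitly and correctly adopt.
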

An irreducible component $C\in \co$ is called $\fa$-saturated if it contains an $\fa$-saturated module and $\fa$-reduced if it does not contain one.
\begin{lemma}[{\cite[Lemma 7.6]{LapMin25}}]
    Every $C\in \co$ can be uniquely written as 
    \[C=C_1*C_2\] with $C_1$ $\fa$-saturated and $C_2$ $\fa$-reduced.
\end{lemma}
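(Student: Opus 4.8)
\emph{Proof proposal.} This is a unique-factorization statement for an irreducible component under $*$, with respect to the dichotomy $\fa$-saturated versus $\fa$-reduced, so the plan is to first isolate a \emph{canonical} decomposition at the level of a single $\Pi$-module and then transport it to components by applying it to a generic point. At the module level the key elementary fact is that $\Hp(-,x_\fa)$ is contravariant and left exact: hence the $\fa$-reduced modules (those $x$ with $\Hp(x,x_\fa)=0$ for $x_\fa\in C(\fa)$) are closed under quotients and under extensions, so a finite sum of $\fa$-reduced submodules of a fixed $x$ is again $\fa$-reduced and $x$ has a unique largest $\fa$-reduced submodule $x^{\mathrm r}$. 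A short diagram chase shows $x/x^{\mathrm r}$ is $\fa$-saturated---a nonzero $\fa$-reduced submodule of it would pull back to an $\fa$-reduced submodule of $x$ properly containing $x^{\mathrm r}$---and left exactness also yields the orthogonality $\Hp(\text{$\fa$-reduced},\text{$\fa$-saturated})=0$, since the image of such a map is a quotient of an $\fa$-reduced module, hence $\fa$-reduced, and a submodule of an $\fa$-saturated module, hence $0$. Thus every $x\in\Lambda$ carries the canonical exact sequence $0\to x^{\mathrm r}\to x\to x^{\mathrm s}\to 0$, and by \cite[Lemma 7.5]{LapMin25} the module $x^{\mathrm s}$, being a direct sum of rigid submodules of $x_\fa$, has isomorphism type varying in a finite set.

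For existence I would apply this to a generic $x\in C$. Since the isomorphism class of $x^{\mathrm s}$ varies in a finite set, it is constant on a dense open $U\subseteq C$, say $x^{\mathrm s}\cong m$; I would check that $m$ is the generic point of a rigid component $C_1:=C(m)$, which is then $\fa$-saturated. Taking $x\in U$ generic and $C_2:=C(x^{\mathrm r})$, I would verify, using semicontinuity of $\dim_\C\Hp(-,x_\fa)$ and of $\dim_\C\Ex$ together with the properties of $*$ from \cite{AizLap}, that $x^{\mathrm r}$ is generic in $C_2$ and that $\dim_\C\Ex(x^{\mathrm s},x^{\mathrm r})=\ex(C_1,C_2)$; this places $x$ in $\mathcal{E}(C_1,C_2)$, so that $C=C_1*C_2$ with $C_1$ $\fa$-saturated and $C_2$ $\fa$-reduced (the latter because its generic element $x^{\mathrm r}$ is an $\fa$-reduced module).

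For uniqueness, suppose $C=C_1'*C_2'$ with $C_1'$ $\fa$-saturated and $C_2'$ $\fa$-reduced. For generic $x\in C$ the description of the $*$-product recalled in \Cref{S:coxeter} (following \cite{AizLap}) furnishes a short exact sequence $0\to y_2\to x\to y_1\to 0$ with $y_i$ generic in $C_i'$. Then $y_1$, the generic point of the rigid $\fa$-saturated component $C_1'$, is an $\fa$-saturated module, and $y_2$ is an $\fa$-reduced module; hence $y_2\subseteq x^{\mathrm r}$ by maximality of $x^{\mathrm r}$, while the composite of $x^{\mathrm r}\hookrightarrow x$ with the quotient map $x\to y_1$ vanishes by the orthogonality above, so $x^{\mathrm r}\subseteq y_2$. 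Therefore $y_2=x^{\mathrm r}$ and $y_1=x^{\mathrm s}$, so $C_2'=C(x^{\mathrm r})$ and $C_1'=C(x^{\mathrm s})$ are determined by $C$; this gives the uniqueness.

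The formal part above is the module-level torsion-pair bookkeeping; the real obstacle is the dictionary between these module-level notions and the component-level statements used in both halves---that the canonical decomposition of a \emph{generic} $x\in C$ has $x^{\mathrm r}$ generic in its own component and is generic for $*$, or equivalently that the generic element of an $\fa$-reduced component is an $\fa$-reduced module (and symmetrically for $\fa$-saturated components and the precise class described in \cite[Lemma 7.5]{LapMin25}). Establishing this is exactly where the rigidity of $C(\fa)$ and of the $\fa$-saturated modules, the semicontinuity of the relevant $\dim_\C\Hp$ and $\dim_\C\Ex$, and the fine structure of $\mathcal{E}(C_1,C_2)$ from \cite{AizLap} have to be combined.
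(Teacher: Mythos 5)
The paper quotes this result from \cite[Lemma 7.6]{LapMin25} without reproving it, so there is no internal argument here to compare against; your sketch would have to be checked against the source. That said, the torsion-pair mechanism you set up is exactly what the paper itself invokes later (in the proof of \Cref{L:indcomp} it is stated that $\fa$-reduced and $\fa$-saturated modules form a torsion pair), and your module-level bookkeeping is correct: $\Hp(-,x_\fa)$ is contravariant left exact, so $\fa$-reduced modules form a torsion class (closed under quotients, extensions, direct sums), $\fa$-saturated modules are the corresponding torsion-free class, every $\Pi$-module has a canonical short exact sequence $0\to x^{\mathrm r}\to x\to x^{\mathrm s}\to 0$, and $\Hp(\text{$\fa$-reduced},\text{$\fa$-saturated})=0$. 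Your uniqueness argument via that orthogonality is sound modulo the translation to components.

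The genuine gap --- which you flag candidly but do not close --- is the passage from modules to irreducible components, and it is not a small one. Concretely you still need: (a) that $x\mapsto\gdim(x^{\mathrm r})$ and $x\mapsto[x^{\mathrm s}]$ are constructible on $C$, so that they are constant on a dense open subset (finiteness of the possible $x^{\mathrm s}$ from \cite[Lemma 7.5]{LapMin25} is necessary but not sufficient for this); (b) that for generic $x\in C$ the module $x^{\mathrm r}$ is \emph{generic in a single component} $C_2$ and $x^{\mathrm s}$ in a single rigid $\fa$-saturated component $C_1$; (c) that $\dim_\C\Ex(x^{\mathrm s},x^{\mathrm r})=\ex(C_1,C_2)$, so that $x\in\mathcal{E}(C_1,C_2)$ and hence $C=C_1*C_2$; and (d) the converse dictionary that a component is $\fa$-reduced precisely when its generic element is an $\fa$-reduced module (and similarly for $\fa$-saturated). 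The $\fa$-saturated side of (d) is eased by \cite[Lemma 7.5]{LapMin25}, since rigid modules are automatically generic in their component, but the $\fa$-reduced side of (d) and the minimality claim in (c) are the real content and your proposal asserts rather than establishes them. Until those semicontinuity/genericity arguments are supplied, this is a well-aimed plan rather than a proof.
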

We refer to this as the $\fa$-decomposition of $C$ and denote in the above lemma 
$C_1=\fa(C)$ and $C_2=\Dde(C)$.
\subsection{Regular multisegments}\label{S:regular}
A segment $\De_1=[a,b]$ precedes a segment $\De_2=[c,d]$, denoted by
$\De_1\prec\De_2$, if
\[a+1\le c\le b+1\le d.\]
We call $\De_1$ and $\De_2$ unlinked if neither $\De_1\prec\De_2$ nor $\De_2\prec\De_1$.
We set
\[l([a,b])\coloneq b-a+1,\,\sh{[a,b]}=[a-1,b-1],\, [a,b]^\lor=[-b,-a]\] and extend those notions linearly to multisegments.
We also write \[a([a,b])=a,\, b([a,b])=b.\]

We call a multisegment $\fm=\De_1+\ldots+\De_k\in\mq$ \emph{regular}, if for all $\ain{i\neq j}{1}{k}$, $b(\De_i)\neq b(\De_j),\, a(\De_i)\neq a(\De_j)$.
We recall that a regular multisegment is said to be of type $4231$ if $k\ge 4$ and
\begin{enumerate}
    \item $\De_{i}\prec\De_{i-1},\,\ain{i}{3}{k}$,
    \item $a(\De_k)<a(\De_1)<a(\De_{k-1})$,
    \item $b(\De_3)<b(\De_1)<b(\De_{2})$.
\end{enumerate}
Similarly, it is said to be of type $3412$
if $k\ge 4$ and
\begin{enumerate}
    \item $\De_{i}\prec\De_{i-1},\,\ain{i}{4}{k}$ and $\De_2\prec\De_1$,
    \item $a(\De_2)<a(\De_k)<a(\De_1)<a(\De_{k-1})$,
    \item $b(\De_4)<b(\De_1)<b(\De_3)<b(\De_{2})$.
\end{enumerate}
A regular multisegment is called \emph{balanced} if it does not contain any submultisegment of type $4231$ or $3412$.
\begin{theorem}[{\cite[Theorem 7.1]{MinLa18}}]
    Let $\fm$ be a regular multisegment. Then $C(\fm)$ is rigid if and only if $\fm$ is balanced.
\end{theorem}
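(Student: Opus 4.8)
The plan is to reformulate rigidity of $C(\fm)$ as a property of the generic module in the conormal bundle $p_+^{-1}(\Z_+(\fm))$, and then to prove the two implications by a reduction to minimal forbidden configurations, respectively an induction on the number of segments. By definition $C(\fm)=\overline{p_+^{-1}(\Z_+(\fm))}$, and since $\Z_+(\fm)$ is a single $G_\did$-orbit, $p_+^{-1}(\Z_+(\fm))$ is a homogeneous fibration over it whose fibre over $\mu_+(\fm)$ is the conormal space $N_\fm:=\{T\in E^-(\did):(\mu_+(\fm),T)\in\Lambda\}$, on which the stabiliser $\mathrm{Aut}_{Q^+}(\mu_+(\fm))$ acts. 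Using that $\Lambda(\did)$ is equidimensional of dimension $\dim E^+(\did)$ (Lusztig) together with the Crawley--Boevey identity $\dim_\C\Ex(x,x)=2\dim_\C\mathrm{End}_\Pi(x)-(\did,\did)$, one sees that $C(\fm)$ is rigid if and only if $\mathrm{Aut}_{Q^+}(\mu_+(\fm))$ acts on $N_\fm$ with a dense orbit, equivalently the generic $x\in C(\fm)$ satisfies $\dim_\C\mathrm{End}_\Pi(x)=\tfrac12(\did,\did)$. Thus the task becomes to bound $\dim_\C\mathrm{End}_\Pi(x)$ for the generic module of $C(\fm)$ in terms of the combinatorics of $\fm$.

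\emph{Non-balanced $\Rightarrow$ non-rigid.} I would first reduce to the case where $\fm$ is exactly of type $4231$ or $3412$, using that non-rigidity is inherited under passage to sub-multisegments: if $\fn\subseteq\fm$ and $C(\fn)$ is not rigid, then $C(\fm)$ is not rigid. (This requires an argument in its own right --- for instance tracking $\dim_\C\mathrm{End}_\Pi$ of the generic module under adjoining a segment, or transporting a nonzero self-extension of the generic module of $C(\fn)$ to that of $C(\fm)$ along the corresponding segment (co)kernel.) For the two base cases the configuration is essentially rank four, and here one computes $N_\fm$ and the stabiliser action explicitly and verifies by hand that there is no dense orbit; the guiding heuristic is that, under the dictionary between regular multisegments and permutations, these are exactly the configurations for which the orbit closure $\overline{\Z_+(\fm)}\subseteq E^+(\did)$ is singular (Lakshmibai--Sandhya), and one shows this singularity forces $\dim_\C\mathrm{End}_\Pi(x)>\tfrac12(\did,\did)$ for every $x\in C(\fm)$.

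\emph{Balanced $\Rightarrow$ rigid.} Here I would induct on the number $k$ of segments of $\fm$; the case $k\le1$ is clear. The combinatorial heart of the induction is the claim that a balanced regular multisegment $\fm$ with $k\ge2$ segments admits a segment $\De$ --- extremal for $\prec$, chosen so as to respect $4231$/$3412$-avoidance --- such that $\fm':=\fm-\De$ is again balanced and $C(\fm)$ equals $C(\De)*C(\fm')$ (or $C(\fm')*C(\De)$); equivalently the generic module $x\in C(\fm)$ is a generic extension of a generic $x'\in C(\fm')$ by a segment module $x''\in C(\De)$. By induction $x'$ is rigid, and $x''$ is trivially rigid. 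Applying $\Hp(-,-)$ and $\Ex(-,-)$ over $\Pi$ to $0\to x''\to x\to x'\to 0$ in both variables, and using the Coxeter-functor exact sequence of \Cref{S:coxeter}, the vanishing of $\Ex(x,x)$ is reduced to the already-known vanishing of $\Ex(x',x')$ and $\Ex(x'',x'')$ together with a compatibility statement for the connecting maps built from $\Hp(x'',x')$, $\Hp(x',x'')$, $\Ex(x'',x')$ and $\Ex(x',x'')$; this last statement, where balancedness of $\fm$ and the fact that $x''$ is a single segment module are used, generalises the argument by which ladder multisegments are seen to be rigid.

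\emph{Main obstacle.} The difficult step is the inductive step just sketched: since $\De$ may be linked to segments of $\fm'$, the pair $(C(\De),C(\fm'))$ need not commute strongly, so $\ex(C(\De),C(\fm'))$ can be positive and rigidity of the generic extension is not formal --- it must be extracted from the fine structure of balanced multisegments. Two ingredients require genuine work: the combinatorial lemma producing a removable segment $\De$ that keeps $\fm'$ balanced and realises $C(\fm)$ as the asserted generic extension (a statement purely about $4231$/$3412$-avoiding configurations), and the analysis of the $\Pi$-module connecting maps between $x''$ and $x'$ under $x''\hookrightarrow x$. In the converse direction the secondary, but also non-formal, point is the inheritance of non-rigidity under sub-multisegments used to cut down to the minimal patterns.
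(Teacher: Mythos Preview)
The paper does not prove this statement; it is cited from \cite[Theorem 7.1]{MinLa18} without argument, so there is nothing in the paper to compare your proof against directly. I will therefore comment on the proposal itself.

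Both directions of your argument have a genuine gap at precisely the point you flag as the obstacle. For balanced $\Rightarrow$ rigid, you peel off a single extremal segment $\De$ and try to deduce rigidity of the generic extension from rigidity of $x'\in C(\fm-\De)$ and of the segment module $x''$. But, as you say, $C(\De)$ and $C(\fm-\De)$ need not strongly commute, so $\ex$ between them may be positive and the long exact sequences in $\Hp,\Ex$ applied to $0\to x''\to x\to x'\to 0$ do not close; the ``compatibility statement for the connecting maps'' you invoke is exactly the missing content, and nothing you write indicates how balancedness of $\fm$ enters to force it. It is worth contrasting this with the inductive device the paper does record in \Cref{L:bala} (from \cite{LapMin25}): there one splits off not a single segment but an $\fa$-saturated piece for a well-chosen basic $\fa$, and item (3) guarantees that $C(\fm)$ strongly commutes with every subcomponent of $C(\fa)$. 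That commutation is what makes such an induction run cleanly; a bare segment-by-segment scheme discards it and so manufactures the difficulty.

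For non-balanced $\Rightarrow$ non-rigid, your reduction to the minimal patterns $4231$ and $3412$ rests on the claim that non-rigidity of $C(\fn)$ for some sub-multisegment $\fn\subseteq\fm$ forces non-rigidity of $C(\fm)$. This is not a formal consequence of anything in sight: the generic module of $C(\fm)$ is not built from that of $C(\fn)$ by any functorial construction, and neither of your suggested routes (tracking $\dim_\C\mathrm{End}_\Pi$ under adjoining a segment, or transporting a nonzero self-extension along a segment cokernel) comes with an actual mechanism. Without this inheritance step the reduction to the two base configurations collapses, and the Lakshmibai--Sandhya heuristic you mention, while suggestive, does not by itself produce a nonzero class in $\Ex(x,x)$.
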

\begin{lemma}[{\cite[Lemma 8.2]{LapMin25}}]\label{L:bala}
    Let $\fm\in \mq$ be a balanced multisegment of length at least $2$. Then there exists non-zero basic $\fa\in \mq$ such that the following holds.
    \begin{enumerate}
        \item $\fa(C(\fm)),\, \Dde(C(\fm))\neq 0$.
        \item $\Dde(C(\fm))=C(\fm')$ and $\fm'$ is balanced.
        \item $C(\fm)$ strongly commutes with every subcomponent of $C(\fa)$.
    \end{enumerate}
\end{lemma}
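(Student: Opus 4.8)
The statement is \cite[Lemma 8.2]{LapMin25}, and the plan is to reconstruct its proof in the notation of the present paper; after passing through the dictionary between $\co$ and $\mq$ the statement becomes a purely combinatorial assertion about $\fm$, so the first step is to reduce everything to a good choice of the basic multisegment $\fa$. Since $\fm$ is balanced it is in particular regular, so $\{a(\De_i)\}$ and $\{b(\De_i)\}$ are two $k$-element sets of integers, and under the standard encoding $\fm$ is recorded by a permutation avoiding the patterns $4231$ and $3412$ --- the combinatorial shadow of the rigidity criterion \cite[Theorem 7.1]{MinLa18}. I would then isolate $\fa$ by a dichotomy: if $\fm$ is not itself basic, I extract from among the segments of $\fm$ attaining an extremal startpoint (or endpoint) either a single such segment, yielding $\fa=[a,b]$, or --- when removing any single extremal segment would be illegal --- a maximal run of singletons already occurring in $\fm$, yielding $\fa=[a,a]+\cdots+[b,b]$; if $\fm$ is itself basic of length at least $2$, I take $\fa$ to be a proper basic piece of it (a single singleton, respectively a proper sub-segment). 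The right choice is precisely the one for which the $\fa$-decomposition of $C(\fm)$ is nontrivial and leaves a balanced remainder, and pinning it down is the combinatorial heart of the argument.

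With $\fa$ fixed I would verify the three assertions in turn. For (1): nontriviality of $\fa(C(\fm))$ means, by \cite[Lemma 7.5]{LapMin25} and the definition of the $\fa$-decomposition, that $C(\fm)$ contains an $\fa$-saturated module, i.e. a direct sum of indecomposable submodules of $x_\fa$; this is arranged by having chosen $\fa$ so that the rigid module $x_\fm\in C(\fm)$ visibly contains such a submodule. Nontriviality of $\Dde(C(\fm))$ follows because the chosen $\fa$ is a proper piece, so $C(\fm)$ is not $\fa$-saturated --- $x_\fm$ has a submodule, corresponding to the part of $\fm$ not captured by $\fa$, that admits no nonzero map to $x_\fa$. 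For (2): the decomposition $C(\fm)=\fa(C(\fm))*\Dde(C(\fm))$ exists and is unique by \cite[Lemma 7.6]{LapMin25}, which pins down $\fm'$ with $\Dde(C(\fm))=C(\fm')$; informally $\fm'$ is $\fm$ with the $\fa$-saturated part removed, and I would verify by direct inspection of the start- and endpoints that $\fm'$ is again regular and still avoids $4231$ and $3412$, i.e. is balanced. For (3): because $\fa$ is basic, the subcomponents of $C(\fa)$ form an explicit, combinatorially transparent family (for $\fa=[a,b]$, the components of sub-segments and their direct sums; for a singleton run, those of its sub-runs), and strong commutation $C(\fm)*C'=C(\fm)\oplus C'$ is equivalent to $\ex(C(\fm),C')=0$; via the Crawley--Boevey identity this reduces to computing the generic values of $\Hp(C(\fm),C')$ and $\Hp(C',C(\fm))$ and comparing their sum with the Euler pairing of the relevant dimension vectors --- a computation governed by the $\prec$-relations between the segments of $\fm$ and the few segments underlying $C'$, which comes out right because $\fa$ was extracted to be unlinked from, or compatibly ordered with respect to, the remaining segments of $\fm$.

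The main obstacle is the simultaneous fulfilment of (2) and (3). The naive extremal choices of $\fa$ typically fail one of them: deleting a segment can uncover a forbidden pattern in $\fm'$, while insisting on strong commutation with every subcomponent of $C(\fa)$ restricts which extremal piece may legitimately be removed. Reconciling the two is exactly where one must use that $\fm$ avoids both $4231$ and $3412$, not merely that it is regular --- which is why the hypothesis is ``balanced'' rather than just ``regular''. I would therefore organize the proof as a case analysis along the permutation-pattern structure of $\fm$, treating the ``one extremal long segment'' and ``one maximal singleton run'' regimes separately and checking, in each, that the chosen deletion is simultaneously pattern-safe (so $\fm'$ stays balanced) and commutation-safe (so the cross-extensions with the subcomponents of $C(\fa)$ vanish).
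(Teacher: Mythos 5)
The lemma is stated in the paper only as a citation to \cite[Lemma 8.2]{LapMin25}; the present paper provides no proof, so the relevant comparison is with the argument in that reference.

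Your proposal is a reasonable outline of the kind of argument one would expect, but it is not a proof. The structural reductions you describe are correct in spirit: the dichotomy between the two kinds of basic $\fa$, the use of \cite[Lemmas 7.5--7.6]{LapMin25} for (1) and (2), and the reduction of (3) via the Crawley--Boevey identity to checking that certain generic $\mathrm{Ext}^1$-groups vanish. You also correctly identify the crux --- that the same choice of $\fa$ must simultaneously yield a balanced remainder $\fm'$ and strongly commute with every subcomponent of $C(\fa)$, and that this requires the full pattern-avoidance hypothesis (balanced), not just regularity.

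However, the whole content of the lemma is precisely what you defer. You never pin down the choice of $\fa$: the phrase ``extract from among the segments attaining an extremal startpoint (or endpoint)'' does not determine an algorithm, and the fallback ``a maximal run of singletons already occurring in $\fm$'' conflates $\fa$ (which is an abstract basic multisegment controlling the $*$-decomposition) with a sub-multisegment of $\fm$; in general $\fa(C(\fm))$ need not be a sub-multisegment of $\fm$ at all. More fundamentally, you explicitly say that you ``would'' carry out a case analysis showing the chosen deletion is simultaneously pattern-safe and commutation-safe, and that this case analysis is ``the combinatorial heart of the argument'' --- but you do not carry it out. Identifying the heart of the argument without performing it leaves the central claim unproved. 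There is also no verification that $C(\fm)$ is not itself entirely $\fa$-saturated or entirely $\fa$-reduced for the extremal choice you propose, which is exactly what (1) requires.

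In summary: the framing is plausible and the literature references are the right ones, but the proposal is a plan for a proof, not a proof. The precise construction of $\fa$ and the combinatorial verification that it satisfies (2) and (3) simultaneously is where all the difficulty lies, and that step is missing.
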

\begin{lemma}\label{L:indcomp}
    Let $C,D\in \co$ and $\fa$ a basic multisegment such that for each subcomponent $C'$ of $C(\fa)$ the components $C'$ and $C$ strongly commute. Then
    \[\hp(D,C)=\hp(\Dde(D),\Dde(C))+\hp(\fa(D),C).\]
\end{lemma}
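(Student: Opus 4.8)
The plan is to compute $\dim_\C\Hp(x,y)$ for a generic pair $x\in D$, $y\in C$ by peeling off, one at a time, the saturated parts of $x$ and of $y$ using the $\fa$-decompositions $D=\fa(D)*\Dde(D)$ and $C=\fa(C)*\Dde(C)$. By the definition of $*$ (\cite[Theorem 3.1]{AizLap}) together with \cite[Lemma 7.6]{LapMin25}, a generic $x\in D$ fits into a short exact sequence
\[0\ra z\ra x\ra w\ra 0,\qquad z\in\Dde(D),\ w\in\fa(D),\]
and a routine genericity argument lets us take $z$ generic in $\Dde(D)$ and $w$ generic in $\fa(D)$; similarly a generic $y\in C$ fits into $0\ra t\ra y\ra s\ra 0$ with $t$ generic in $\Dde(C)$ and $s$ generic in $\fa(C)$. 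Since $\dim_\C\Hp(-,-)$ is upper semicontinuous on the irreducible varieties $D\times C$, $\fa(D)\times C$ and $\Dde(D)\times\Dde(C)$, we may in addition arrange that $\dim_\C\Hp(x,y)=\hp(D,C)$, $\dim_\C\Hp(w,y)=\hp(\fa(D),C)$ and $\dim_\C\Hp(z,t)=\hp(\Dde(D),\Dde(C))$; all of these are dense open conditions, so a single choice of $x,y$ realizes them simultaneously.

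The first homological input is $\Hp(z,s)=0$. The module $s$ is $\fa$-saturated, so by \cite[Lemma 7.5]{LapMin25} it is a direct sum $s=\bigoplus_j u_j$ of irreducible submodules of the rigid module $x_\fa\in C(\fa)$, and $z$ is $\fa$-reduced, so $\Hp(z,x_\fa)=0$. Composing a homomorphism $z\ra u_j$ with the inclusion $u_j\hra x_\fa$ gives an injection $\Hp(z,u_j)\hra\Hp(z,x_\fa)=0$, hence $\Hp(z,s)=0$. Applying $\Hp(z,-)$ to $0\ra t\ra y\ra s\ra 0$ therefore yields an isomorphism $\Hp(z,t)\cong\Hp(z,y)$, so $\dim_\C\Hp(z,y)=\hp(\Dde(D),\Dde(C))$.

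The second, and \emph{main}, input is $\Ex(w,y)=0$. Again $w$ is $\fa$-saturated, so $w=\bigoplus_j u_j'$ with each $u_j'$ an irreducible submodule of $x_\fa$, and the component $C_j'\in\co$ of $u_j'$ is then a subcomponent of $C(\fa)$. By hypothesis $C_j'$ strongly commutes with $C$, i.e.\ the generic extension $C_j'*C$ equals $C_j'\oplus C$, which forces $\ex(C_j',C)=0$; hence $\Ex(u_j',y)=0$ for generic $u_j'\in C_j'$ and generic $y\in C$, and, making these genericities compatible with the earlier choices, $\Ex(w,y)=\bigoplus_j\Ex(u_j',y)=0$. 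Applying $\Hp(-,y)$ to $0\ra z\ra x\ra w\ra 0$ now gives the short exact sequence
\[0\ra\Hp(w,y)\ra\Hp(x,y)\ra\Hp(z,y)\ra 0,\]
and therefore
\[\hp(D,C)=\dim_\C\Hp(x,y)=\dim_\C\Hp(w,y)+\dim_\C\Hp(z,y)=\hp(\fa(D),C)+\hp(\Dde(D),\Dde(C)),\]
which is the asserted identity.

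I expect the genuine difficulty to lie in two essentially bookkeeping points: first, that a single generic pair $(x,y)\in D\times C$ can be chosen so that $z,w$ and $t,s$ are simultaneously generic in their respective components and compatible with the minimality of all the Hom- and Ext-dimensions invoked above; and second, the vanishing $\Ex(w,y)=0$, which is the only place where the strong-commutation hypothesis is used and which relies on decomposing the $\fa$-saturated module $w$ via \cite[Lemma 7.5]{LapMin25} and recognising each summand's component as a subcomponent of $C(\fa)$. Both are of the type systematically handled by the generic-extension machinery of \cite{AizLap} and the saturated/reduced formalism of \cite{LapMin25}, and I would isolate them as preliminary claims before running the two long exact sequences.
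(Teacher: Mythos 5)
Your proof is correct and follows essentially the same strategy as the paper: apply $\Hp(-,y)$ to the generic short exact sequence $0\ra z\ra x\ra w\ra 0$ from the $\fa$-decomposition of $D$, kill $\Ex(w,y)$ using the strong-commutation hypothesis via \cite[Lemma 7.5]{LapMin25}, and reduce $\hp(\Dde(D),C)$ to $\hp(\Dde(D),\Dde(C))$ at the end. The only (cosmetic) difference is that where the paper simply cites the torsion-pair property from \cite[\S 7]{LapMin25} to get $\hp(\Dde(D),C)=\hp(\Dde(D),\Dde(C))$, you unwind it by hand, deriving $\Hp(z,s)=0$ directly from $\Hp(z,x_\fa)=0$ and the decomposition of the saturated part $s$ of $y$ into submodules of $x_\fa$ — same content, just made explicit.
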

\begin{proof}
    Let \[0\ra x_2\ra x\ra x_1\ra 0\] be a short exact sequence with suitably generic $x_1\in \fa(D),x_2\in \Dde(D)$ and $x\in D$.
    For a generic $y\in C$ we obtain an exact sequence
    \[0\ra \Hp(x_1,y)\ra \Hp(x,y)\ra \Hp(x_2,y)\ra \Ex(x_1,y).\]
    By assumption the last term vanishes for suitable choices of $x$ and hence
    \[\hp(D,C)=\hp(\fa(D),C)+\hp(\Dde(D),C).\]
    Since $\Dde(D)$ is $\fa$-reduced, and $\fa$-reduced and $\fa$-saturated modules form a torsion pair, see \cite[§7]{LapMin25}, $\hp(\Dde(D),C)=\hp(\Dde(D),\Dde(C))$.
\end{proof}
\section{Representation theory}
We set $G_n\coloneq \gl_n(\Ff)$ and denote by $\rep_n$ the category of complex, smooth, finite length representations of $G_n$. If $\pi\in \rep_n$ write $\deg(\pi)=n$. For any partition $\alpha$ of $n$, we denote the $\Ff$-points of the parabolic subgroup of $G_n$ containing the upper triangular matrices of type $\alpha$ by $P_\alpha$ and its unipotent part by $U_\alpha$. The opposite parabolic of $P_\alpha$ is denoted by $\overline{P_\alpha}$ and it is conjugate to $P_{\overline{\alpha}}$, where $\overline{\alpha}$ is the opposite parititon of $\alpha$. We also let
\[w_\alpha\coloneq\begin{pmatrix}
    0&\ldots&1_{\alpha_1}\\\vdots&\iddots&\vdots\\1_{\alpha_k}&\ldots&0
\end{pmatrix}.\]
We let $\Irr_n$ be the set of isomorphism classes of irreducible representations in $\rep_n$, $\rep=\bigcup_{n\in\NN}\rep_n$ and $ \irr=\bigcup_{n\in\NN}\irr_n$.
We denote the socle and cosocle of $\pi\in \rep$ by $\soc(\pi)$ and $\cosoc(\pi)$ and say $\pi$ is SI if $\soc(\pi)$ is irreducible and $\soc(\pi)$ appears with multiplicity one in the decomposition series of $\pi$.

For $\alpha=(\alpha_1,\ldots,\alpha_k)$ a partition of $n$, we denote the normalized parabolic induction functor $\id_{P_\alpha}$ from $P_\alpha$ to $G_n$ by
\[(-)\times\ldots\times(-)\colon \rep_{\alpha_1}\times\ldots\times\rep_{\alpha_k}\ra\rep_n,\] the category of complex, smooth, finite length representations of $G_{\alpha_1}\times\ldots\times G_{\alpha_k}$ by $\rep_\alpha$ and the normalized Jacquet functor
\[r_\alpha\colon \rep_n\ra \rep_\alpha.\]

Furthermore, we denote by $\KK_n$ the Grothendieck group of $\rep_n$ and write the natural map as $[-]\colon\rep_n\ra \KK_n$.
We recall that a cuspidal representation $\rho\in \Irr_n$ is a representation such that for all non-trivial partitions $\alpha$ of $r_\alpha(\rho)=0$. We denote the corresponding subset as $\cus_n$ and $\cus\coloneq\bigcup_{n\in\NN}\cus_n$.

For $\rho\in \cus$, we recall that for $\chi$ a character, the representation $\rho\times\rho\chi$ is reducible if and only if $\chi\cong\abs^\pm$. We denote the cuspidal line of $\rho$ by $\ZZ[\rho]\coloneq\{[\rho\nr^k]:\,k\in \ZZ\}$.

For a set $S$, we let $\NN(S)$ be the monoid of formal finite sums of elements in $S$. We recall the cuspidal support map,
\[\cusp:\Irr\ra \NN(\cus),\] given by $\cusp(\pi)= [\rho_1]+\ldots+[\rho_k]$, if $\pi$ is a subquotient of $\rho_1\times\ldots\times\rho_k$ and $\rho_i\in\cus$ for all $\ain{i}{1}{k}$.

We note that for all $\pi,\pi'\in \rep$, $[\pi\times\pi']=[\pi\times \pi']$. If $\pi\in \Irr$, twisting the action by conjugation of a fixed element in the linear group preserves the isomorphism class and twisting the action by $g\mapsto {}^tg^{-1}$ sends $\pi$ to $\pi^\lor$. As a corollary we obtain the following lemma.
\begin{lemma}\label{L:easylemma}
    Let $\pi_1,\ldots,\pi_k\in \Irr$. If \[\dim_\C\ho_{G_{\deg(\pi_1\times\ldots\times\pi_k)}}(\pi_k\times\ldots\times\pi_1,\pi_1\times\ldots\times\pi_k,)=1,\] then $\soc(\pi_1\times\ldots\times\pi_k)$ is irreducible.
\end{lemma}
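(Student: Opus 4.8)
The plan is to exploit the standard adjunction between parabolic induction and the Jacquet functor together with the second adjointness theorem (Bernstein), which relates induction from $P_\alpha$ to the Jacquet functor along the opposite parabolic $\overline{P_\alpha}$. Concretely, write $\pi=\pi_1\times\cdots\times\pi_k$ and $\pi'=\pi_k\times\cdots\times\pi_1$. First I would observe that $\ho_{G_n}(\pi',\pi)\neq 0$ always holds: indeed, $\soc(\pi)$ is contained in $\pi$, and by the geometric lemma (or by a standard Langlands-quotient/Zelevinsky-type argument) one shows $\soc(\pi)$ also embeds into $\pi'$, so there is at least one nonzero map $\pi'\to\pi$; combined with the hypothesis that this Hom-space is one-dimensional, this will pin things down.

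The key step is the following. Suppose $\pi$ has two non-isomorphic irreducible subrepresentations, or one irreducible subrepresentation $\tau$ occurring with multiplicity $\ge 2$ in $\soc(\pi)$; in either case $\soc(\pi)$ contains $\tau\oplus\tau$ or $\tau\oplus\tau'$ for distinct irreducibles $\tau,\tau'$. I would then show that each irreducible constituent $\tau$ of $\soc(\pi_1\times\cdots\times\pi_k)$ also embeds into $\pi_k\times\cdots\times\pi_1$. The mechanism: by Frobenius reciprocity, $\tau\hookrightarrow \pi_1\times\cdots\times\pi_k$ forces $r_\alpha(\tau)$ (for the appropriate $\alpha$) to surject onto $\pi_1\otimes\cdots\otimes\pi_k$ on cosocles, i.e. $\pi_1\otimes\cdots\otimes\pi_k$ is a quotient of $r_\alpha(\tau)$; dually, using second adjointness, $\pi_k\otimes\cdots\otimes\pi_1$ being a submodule of $\overline{r}_{\overline\alpha}(\tau)$ gives a nonzero map $\tau\to \pi_k\times\cdots\times\pi_1$. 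The point is that the same $\tau$ produces an embedding into the reversed product. Hence $\ho_{G_n}(\pi_k\times\cdots\times\pi_1,\pi_1\times\cdots\times\pi_k)$ contains a copy of $\ho_{G_n}(\tau,\pi_1\times\cdots\times\pi_k)$ for each irreducible $\tau$ in the socle, counted with multiplicity. Since $\soc$ is the largest semisimple subobject, $\dim_\C\ho_{G_n}(\pi',\pi)\ge \dim_\C\ho_{G_n}(\soc(\pi),\pi)\ge (\text{number of irreducible summands of }\soc(\pi)$, with multiplicity$)$. If this number is $\ge 2$ we contradict the hypothesis; therefore $\soc(\pi)$ is irreducible.

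I expect the main obstacle to be the verification that every irreducible summand $\tau$ of $\soc(\pi_1\times\cdots\times\pi_k)$ genuinely embeds into the reversed induced representation $\pi_k\times\cdots\times\pi_1$ — i.e.\ producing the map $\tau\to\pi'$ rather than just $\pi'\to\tau$ or a subquotient relation. This is exactly where second adjointness (Bernstein) and the conjugacy of $\overline{P_\alpha}$ with $P_{\overline\alpha}$ via $w_\alpha$ must be used carefully: one shows that if $\pi_1\otimes\cdots\otimes\pi_k$ is a sub of $r_\alpha(\tau)$ then, applying $w_\alpha$, $\pi_k\otimes\cdots\otimes\pi_1$ is a sub of $\overline{r}_{\overline\alpha}(\tau)$, and second adjointness converts this into a nonzero map $\tau\to \id_{\overline{P}_{\overline\alpha}}(\pi_k\otimes\cdots\otimes\pi_1)\cong\pi_k\times\cdots\times\pi_1$. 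Alternatively — and this is probably the cleanest route to avoid delicate socle-vs-sub bookkeeping — one deduces the statement from the already-quoted fact that $[\pi\times\pi']=[\pi'\times\pi]$ in the Grothendieck group together with the observation that a nonzero map $\pi'\to\pi$ whose image contains $\soc(\pi)$, being the unique such map up to scalar, forces $\soc(\pi)$ to be both the socle of $\pi$ and (via the analogous argument applied to $\cosoc$) to match up multiplicity-freely. I would write the argument in the first, adjunction-theoretic form, as it is the one that generalizes and is closest in spirit to \cite{LapMin25}.
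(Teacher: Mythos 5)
Your target statement — that every irreducible constituent $\tau$ of $\soc(\pi_1\times\cdots\times\pi_k)$ \emph{embeds} into $\pi_k\times\cdots\times\pi_1$ — is false, and fixing it exposes the real gap in the adjunction argument. Take $k=2$, $\pi_1=\rho$, $\pi_2=\rho\nr$ with $\rho$ cuspidal. Then $\soc(\rho\times\rho\nr)=\Z([0,1]_\rho)$, while $\soc(\rho\nr\times\rho)=\Z([0]_\rho+[1]_\rho)$; the socles are distinct irreducibles, so $\tau$ does \emph{not} embed into the reversed product. What is true, and what the proof actually needs, is that $\tau$ is a \emph{quotient} of $\pi_k\times\cdots\times\pi_1$ (equivalently $\soc(\pi_1\times\cdots\times\pi_k)\cong\cosoc(\pi_k\times\cdots\times\pi_1)$), so that precomposing the surjection $\pi_k\times\cdots\times\pi_1\twoheadrightarrow\tau$ with each inclusion $\tau\hookrightarrow\pi_1\times\cdots\times\pi_k$ produces at least $\mathrm{length}(\soc)$ linearly independent elements of the Hom-space. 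You also have the direction of second adjointness reversed on the page: $\ho_G(\id_{P}\sigma,\tau)\cong\ho_{M}(\sigma,r_{\overline P}\tau)$, so a submodule of the opposite Jacquet module would produce a map $\pi_k\times\cdots\times\pi_1\to\tau$, not $\tau\to\pi_k\times\cdots\times\pi_1$ as you write — but that direction is anyway the one you want.

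The more serious gap is in justifying ``$\pi_k\otimes\cdots\otimes\pi_1$ is a \emph{submodule} of $\overline{r}_{\overline\alpha}(\tau)$.'' Frobenius reciprocity applied to $\tau\hookrightarrow\pi_1\times\cdots\times\pi_k$ only gives that $\pi_1\otimes\cdots\otimes\pi_k$ is a \emph{quotient} of $r_\alpha(\tau)$. Conjugating the Jacquet functor by $w_\alpha$ changes the ambient parabolic and permutes the Levi factors, but it preserves the sub/quotient status; so what you obtain is again that $\pi_k\otimes\cdots\otimes\pi_1$ is a \emph{quotient} of $\overline{r}_{\overline\alpha}(\tau)$. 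Turning a quotient of a Jacquet module into a sub of the opposite Jacquet module requires the Casselman pairing $(r_P\tau)^\lor\cong r_{\overline P}(\tau^\lor)$ \emph{together with} the $\gl_n$-specific identification $\tau^\lor\cong\tau^\theta$ for $\theta(g)={}^tg^{-1}$ (the Gelfand--Kazhdan/MVW automorphism). This is precisely the input the paper signals in the sentence preceding the lemma (``twisting the action by $g\mapsto{}^tg^{-1}$ sends $\pi$ to $\pi^\lor$''): applying the contravariant exact functor $\tau\mapsto(\tau^\lor)^\theta$ sends $\pi_1\times\cdots\times\pi_k$ to $\pi_k\times\cdots\times\pi_1$ and fixes irreducibles, which immediately yields $\soc(\pi_1\times\cdots\times\pi_k)\cong\cosoc(\pi_k\times\cdots\times\pi_1)$, from which the lemma follows. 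Your sketch — including the ``cleanest route'' variant, which asserts a map $\pi'\to\pi$ whose image contains $\soc(\pi)$ without constructing it — omits this $\gl_n$-specific duality, and the adjunction machinery alone cannot supply it.
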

\subsection{Multisegments}
Let $a,b\in \ZZ$, $a\le b$, $\rho\in \cus$. To this datum we associate the segment $[a,b]_\rho\coloneq ([\rho \nr^a],\ldots,[\rho\nr^b])$.
We let \[l[a,b]_\rho)=b-a+1,\, \deg([a,b]_\rho=(b-a+1)\deg(\rho),\, [a,b]_\rho^\lor=[-b,-a]_{\rho^\lor}.\]
The set of segments is denoted by $\Se$ and the set of multisegments
is defined as $\Ms\coloneq \NN(\Se)$. We extend the functions $l$, $\deg$ and $(-)^\lor$ linearly to $\Ms$.

For fixed $\rho\in\cus$, we let $\Ms(\rho)$ be the set of multisegments of the form $[a_1,b_1]_\rho+\ldots+[a_k,b_k]_\rho$. For $\fm$ we write the block decomposition of $\fm=\fm_1+\ldots+\fm_k$ with $\fm_i\in \Ms(\rho_i)$ and $[\rho_i]\notin \ZZ[\rho_j]$ for $i\neq j$.
It is clear that $\Ms(\rho)$ is in natural bijection with $\mq$. We thus can import notions like \emph{precede} or the Aubert-Zelevinsky involution to $\Ms$. In particular, we call $\fm\in \Ms$ balanced if each $\fm_i$ in its block decomposition is balanced, and similarly for regular.

For a multisegment $\De_1+\ldots+\De_k$, we say $(\De_1,\ldots,\De_k)$ is an arranged form if there exists no $i<j$ such that $\De_i\prec\De_j$.
\begin{theorem}[\cite{Zel}]\label{T:zel}
    There exists a bijection
    \[\Z\colon \Ms\ra\Irr\] satisfying the following properties.
    \begin{enumerate}
        \item $\Z([a,b]_\rho)=\soc(\rho\nr^a\times\ldots\times\rho\nr^b)$.
        \item For $m,n\in\NN,\,m+n=\deg([a,b]_\rho)$,
        \[r_{m,n}(\Z([a,b]_\rho))=0\] unless $\deg(\rho)\lvert m$, in which case, \[r_{m,n}(\Z([a,b]_\rho))=\Z([a,a+\frac{m}{\deg(\rho)}-1]_\rho)\otimes \Z([a+\frac{m}{\deg(\rho)}-1,b]_\rho).\]
        \item If $(\De_1,\ldots,\De_k)$ is an arranged form of $\fm\in\Ms$, then
        \[\Z(\fm)=\soc(\Z(\De_1)\times\ldots\times \Z(\De_k))=\cosoc(\Z(\De_k)\times\ldots\times\Z(\De_1))\]
        and appears with multiplicity $1$ in $\Z(\De_1)\times\ldots\times \Z(\De_k)$.
        \item $\deg(\Z(\fm))=\deg(\fm)$.
        \item If $\fm=\De_1+\ldots+\De_k$ is a multisegment such that the $\De_i$ are pairwise unlinked, then 
        \[\Z(\fm)=\Z(\De_1)\times\ldots\times \Z(\De_k).\]
        \item If $\fm'$ is a second multisegment, $\Z(\fm+\fm')$ appears in $\Z(\fm)\times \Z(\fm')$ with multiplicity $1$.
        \item If $\fm\in \Ms(\rho),\,\fm'\in \Ms(\rho')$ with $\rho'\notin\ZZ[\rho]$, \[\Z(\fm+\fm')\cong \Z(\fm)\times \Z(\fm').\]
    \end{enumerate}
\end{theorem}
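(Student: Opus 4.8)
The proof follows Zelevinsky's original strategy, resting throughout on Jacquet-module computations via the geometric lemma. First I would treat a single segment $\De=[a,b]_\rho$: writing $r_{m,n}(\rho\nr^a\times\cdots\times\rho\nr^b)$ as a sum over shuffles, one checks that only the extremal, order-preserving shuffle contributes a genuine submodule, so the product has a unique irreducible submodule, which we name $\Z(\De)$, and its Jacquet modules are exactly those recorded in (2), each with multiplicity one. Next, for two unlinked segments the same method shows $\Z(\De_1)\times\Z(\De_2)$ is irreducible, which gives the pairwise case of (5) and, crucially, the fact that transposing adjacent unlinked segments leaves the isomorphism class of a product of segment representations unchanged.

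\textbf{Multisegments.} For $\fm=\De_1+\cdots+\De_k$ with $(\De_1,\dots,\De_k)$ an arranged form, set $\Z(\fm)\coloneq\soc(\Z(\De_1)\times\cdots\times\Z(\De_k))$. The crux is that this socle is irreducible and occurs with multiplicity one in the product. By \Cref{L:easylemma} it suffices to prove
\[\dim_\C\ho_{G_n}\!\bigl(\Z(\De_k)\times\cdots\times\Z(\De_1),\,\Z(\De_1)\times\cdots\times\Z(\De_k)\bigr)=1.\]
By Frobenius reciprocity the left-hand side is the multiplicity of $\Z(\De_k)\otimes\cdots\otimes\Z(\De_1)$ in a suitable Jacquet module of $\Z(\De_1)\times\cdots\times\Z(\De_k)$, which the geometric lemma turns into a count of shuffles compatible with the segment data; the hypothesis that $(\De_1,\dots,\De_k)$ is arranged is exactly what forces this count to equal $1$. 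Independence of $\Z(\fm)$ from the chosen arranged form follows from the unlinked case above, and (1), (3), (5), (6) all drop out; (4) is immediate from $\deg(\Z(\De))=\deg(\De)$.

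\textbf{Bijectivity.} For injectivity I would induct on $\deg(\fm)$: fixing a cuspidal $\rho\nr^c$ in the support, the largest $x$ for which $\Z(\rho\nr^c)^{\times x}$ splits off a Jacquet module of $\Z(\fm)$ equals the number of segments of $\fm$ on the line of $\rho$ ending at $c$, and the complementary tensor factor is $\Z$ of the multisegment obtained by shortening each of those segments; iterating this recovers $\fm$ from $\Z(\fm)$. For surjectivity, let $\pi\in\Irr_n$; its cuspidal support gives an embedding of $\pi$ into a product of cuspidals, and reordering within each line and collecting maximal staircases produces an embedding $\pi\hra\Z(\De_1)\times\cdots\times\Z(\De_k)$. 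A downward induction in the Zelevinsky order on multisegments---using that every irreducible subquotient of such a product has the form $\Z(\fn)$ with $\De_1+\cdots+\De_k$ dominating $\fn$, and that in arranged form the socle is $\Z(\De_1+\cdots+\De_k)$---then yields $\pi=\Z(\fm)$ for some $\fm$. Finally (7) follows by propagating irreducibility of products across different lines from cuspidals to segments to multisegments.

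\textbf{Main obstacle.} Everything funnels into the geometric-lemma shuffle count, and the genuinely delicate point is isolating the single contributing shuffle in the arranged case---equivalently the multiplicity-one socle statement. The surjectivity step inherits the same combinatorial core when one has to identify the irreducible subquotients of an arbitrary product of segment representations.
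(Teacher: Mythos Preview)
The paper does not prove this theorem at all; it is stated with the citation \cite{Zel} and used as a black box throughout. There is thus no in-paper argument to compare your sketch against---you have outlined Zelevinsky's classical proof, not anything the author does here.

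As an outline of that classical proof your proposal is broadly correct and follows the standard route. One point worth flagging: invoking \Cref{L:easylemma} gives you only that $\soc(\Z(\De_1)\times\cdots\times\Z(\De_k))$ is irreducible, not that it occurs with multiplicity one, which is part of (3) and is what underlies (6). The Hom-space being one-dimensional bounds the \emph{socle} multiplicity, not the multiplicity in the full composition series. In Zelevinsky's argument the multiplicity-one statement comes instead from a Jacquet-module computation: one identifies a specific ordered cuspidal constituent (essentially the ``highest'' one for the given arranged form) that occurs exactly once in $r_\alpha$ of the product and also in $r_\alpha(\Z(\fm))$. Your injectivity paragraph already contains this idea; it just needs to be promoted to do double duty for the multiplicity claim as well.
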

As is common, we will usually write $\L(\fm)\coloneq\Z(\fm^*)$.
\subsection{Basic representations}
We call $\fa\in \Ms$ \emph{basic} if it is of one of the following forms.
\[\fa=[a,b]_\rho\text{ or }\fa=[a,a]_\rho+\ldots+[b,b]_\rho.\]
A multisegment $\fm$ is called $\fa$-saturated if it is of the form
\[\fm=[a_1,b]_\rho+\ldots+[a_k,b]_\rho,\, \ain{a_i}{a}{b}\]
in the first case and of the form
\[\fm=[a,a]_\rho+\ldots+[b_1,b_1]_\rho+\ldots+[a,a]_\rho+\ldots+[b_k,b_k]_\rho,\, \ain{b_i}{a}{b}\] in the second case.
A component of $\fa$ is a multisegment of the form $[a',b],\,\ain{a'}{a}{b}$ in the first case and 
$[a',a']+\ldots+[b,b],\,\ain{a'}{a}{b}$ in the second case. In particular, if $\fm$ is an $\fa$-saturated segment,
\[\Z(\fm)=\Z(\fa_1)\times\ldots\times\Z(\fa_k),\]
where the $\fa_i$ are basic multisegments of the same type as $\fa$.
A representation $\pi\in \irr$ is called $\fa$-reduced if there exists no $\tau\in \irr$ and an $\fa$-saturated multisegment $\fm$ such that $\pi\hra \Z(\fm)\times \tau$. In \cite[§7]{LapMin25} it is shown that this is equivalent to the fact that there exists
no $\tau\in \irr$ and an $\fa$-saturated multisegment $\fm$ such that $r_{\deg(\fm),\deg(\tau)}(\pi)$ contains $\Z(\fm)\otimes \tau$ as a subquotient.
\begin{lemma}[{\cite[§7]{LapMin25}}]\label{L:indint}
Let $\fa$ be a basic representation.
    For every irreducible representation $\pi=\Z(\fn)$, there exists a unique, possible $0$, $\fa$-saturated multisegment $\fm$ and a unique $\fa$-reduced representation $\Z(\fn')$ such that $\pi=\soc(\Z(\fm)\times\Z(\fn'))$ and $\pi$ appears with multiplicity one in the composition series of $\Z(\fm)\times \Z(\fn')$.

    If $\fn\in \Ms(\rho)\cong \mq$, then
\[\fa(C(\fn))=C(\fm),\, \Dde(C(\fn))=C(\fn').\]
\end{lemma}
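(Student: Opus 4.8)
\emph{Plan.} The lemma makes two assertions. The first is representation-theoretic: every $\pi = \Z(\fn)$ has a decomposition $\pi = \soc(\Z(\fm)\times\Z(\fn'))$, unique and with $\pi$ of multiplicity one, in which $\fm$ is an $\fa$-saturated multisegment and $\Z(\fn')$ is $\fa$-reduced. The second is geometric: this decomposition matches, under the correspondence $C$, the $\fa$-decomposition $C(\fn) = \fa(C(\fn)) * \Dde(C(\fn))$ of \cite[Lemma 7.6]{LapMin25}. My plan is to prove the first assertion by a ``highest $\fa$-derivative'' argument on Jacquet modules and then to transport it across the bijections $\lambda_\pm$.

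\emph{The representation-theoretic $\fa$-decomposition.} Call an $\fa$-saturated multisegment $\fm$ \emph{admissible for} $\pi$ if $\Z(\fm)\otimes\tau$ is a subquotient of $r_{\deg(\fm),\deg(\tau)}(\pi)$ for some $\tau\in\irr$; by the equivalence recalled in the subsection on basic representations, $\pi$ is $\fa$-reduced iff only $\fm = 0$ is admissible. As $0$ is admissible and the cuspidal support of any admissible $\fm$ sits inside the (finite) cuspidal support of $\pi$, the degree of admissible multisegments is bounded; fix an admissible $\fm$ of maximal degree with a witness $\tau = \Z(\fn')$. The combinatorial inputs are that $\fa$-saturated multisegments are closed under addition and that $\Z(\fm_1+\fm_2)$ occurs in $\Z(\fm_1)\times\Z(\fm_2)$ with multiplicity one (\Cref{T:zel}(6)); with exactness and transitivity of the Jacquet functors, a nonzero $\fa$-saturated $\fm''$ admissible for $\tau$ would produce an admissible $\fm + \fm''$ for $\pi$ of strictly larger degree, which is impossible --- so $\Z(\fn')$ is $\fa$-reduced. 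To upgrade ``$\Z(\fm)\otimes\tau$ occurs in the Jacquet module'' to $\pi = \soc(\Z(\fm)\times\Z(\fn'))$ with multiplicity one, I would argue that maximality of $\deg(\fm)$ forces $\Z(\fm)\otimes\tau$ to occur as a \emph{quotient} of $r_{\deg(\fm),\deg(\tau)}(\pi)$, giving $\pi \hra \Z(\fm)\times\Z(\fn')$ by Frobenius reciprocity; writing the $\fa$-saturated $\Z(\fm)$ as a product of basic representations and using $\fa$-reducedness of $\Z(\fn')$, the space $\ho(\Z(\fn')\times\Z(\fm),\Z(\fm)\times\Z(\fn'))$ reduces to the basic case, where it is one-dimensional, so that $\Z(\fm)\times\Z(\fn')$ is SI with socle $\pi$ by \Cref{L:easylemma} (in its refined form giving also multiplicity one). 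Uniqueness of the maximal admissible $\fm$, hence of the pair $(\fm,\fn')$, is again part of the highest-derivative package.

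\emph{Matching the two decompositions.} Let $\fn \in \Ms(\rho) \cong \mq$. First I would record the dictionary that a multisegment is combinatorially $\fa$-saturated (resp.\ $\fa$-reduced) iff its component is $\fa$-saturated (resp.\ $\fa$-reduced) module-theoretically: in the saturated direction, \cite[Lemma 7.5]{LapMin25} identifies the generic module of $C(\fm)$ with a direct sum of rigid submodules of $x_\fa$, which is exactly $\fa$-saturatedness, and the reduced direction follows by contraposition via the uniqueness in \cite[Lemma 7.6]{LapMin25}. Thus $C(\fm)$ is a rigid $\fa$-saturated component and $C(\fn')$ is $\fa$-reduced, and it remains to prove $C(\fn) = C(\fm) * C(\fn')$, whereupon the uniqueness in \cite[Lemma 7.6]{LapMin25} yields $\fa(C(\fn)) = C(\fm)$ and $\Dde(C(\fn)) = C(\fn')$. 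For this I would compare $\soc(\Z(\fm)\times\Z(\fn'))$ with $\pi(C(\fm)*C(\fn'))$: because $C(\fm)$ is rigid and $\fa$-saturated, the operation $*$ here agrees with taking the socle of the parabolic induction --- this is the instance of \cite[Conjecture 5.1]{LapMin25} for an $\fa$-saturated left factor, which one reduces, via the product decomposition of $\Z(\fm)$ into basic representations, to the compatibility of the generic-extension construction with adjoining a single basic representation, a reflection of the interplay between the crystal structure on $\Lambda$ and $\rho$-derivatives (\emph{cf.}\ \cite{KashSai}, \cite{Mder}, \cite{Jader}) --- so $\pi(C(\fm)*C(\fn')) = \soc(\Z(\fm)\times\Z(\fn')) = \Z(\fn) = \pi(C(\fn))$, whence $C(\fm)*C(\fn') = C(\fn)$.

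\emph{Main obstacle.} Beyond the routine bookkeeping with Jacquet functors, two points carry the weight. The first is the highest-$\fa$-derivative package in the middle paragraph --- the subquotient-to-quotient step, the multiplicity-one/SI statement, and the uniqueness of the maximal admissible $\fm$ --- which genuinely uses the combinatorics of basic multisegments and Zelevinsky's classification. The second, and the real heart of the lemma, is the identity $C(\fn) = C(\fm) * C(\fn')$: since the socle-interpretation of $*$ is conjectural in general, one must exploit the special structure of $\fa$-saturated components (rigidity, the product decomposition into basic representations, strong commutation with the subcomponents of $C(\fa)$ as in the hypothesis of \Cref{L:indcomp}) to make the required instance unconditional. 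By comparison, transporting the combinatorial notions of $\fa$-saturated and $\fa$-reduced across $\lambda_\pm$ is harmless.
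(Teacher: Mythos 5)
The statement you are asked to prove is cited in the paper directly from \cite[§7]{LapMin25}; the paper itself gives no proof, so there is nothing to compare your argument against except the reference. Taking your proposal on its own merits, the overall outline — a highest-$\fa$-derivative argument on the representation side, followed by matching the combinatorial decomposition with the $*$-decomposition on the module side — is the right plan and does reflect how these ideas are treated in the literature. However, at two of the three places you yourself flag as ``carrying the weight,'' what you give is a description of a gap rather than an argument to close it.

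First, in the highest-derivative step you pass from ``$\Z(\fm)\otimes\tau$ is a \emph{subquotient} of $r_{\deg(\fm),\deg(\tau)}(\pi)$ for $\fm$ of maximal degree'' to ``$\Z(\fm)\otimes\tau$ is a \emph{quotient} of $r_{\deg(\fm),\deg(\tau)}(\pi)$,'' and from there to the socle statement and multiplicity one. This is precisely the nontrivial content of the $\rho$-derivative theory of \cite{Mder}, \cite{Jader}; writing that it is ``part of the highest-derivative package'' is a pointer, not a proof. Moreover, in your transitivity step you claim that if $\fm''\neq 0$ is admissible for $\tau$ then $\fm+\fm''$ is admissible for $\pi$. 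Iterating Jacquet functors gives that $\Z(\fm)\otimes\Z(\fm'')\otimes\tau'$ occurs in the iterated Jacquet module, but to conclude that $\Z(\fm+\fm'')\otimes\tau'$ occurs in $r_{\deg(\fm)+\deg(\fm''),\deg(\tau')}(\pi)$ you must pin down which irreducible subquotient of the first factor of $r_{\deg(\fm)+\deg(\fm''),\deg(\tau')}(\pi)$ contributes $\Z(\fm)\otimes\Z(\fm'')$ after one more Jacquet functor; nothing a priori forces it to be $\Z(\fm+\fm'')$. One really needs the special shape of $\fa$-saturated multisegments here (the segments in $\fm,\fm''$ are pairwise unlinked, $\Z(\fm)\times\Z(\fm'')=\Z(\fm+\fm'')$ is irreducible), and more care to exclude the other candidates.

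Second, and more seriously, the geometric matching hinges on $C(\fn)=C(\fm)*C(\fn')$, which you correctly identify as the heart of the lemma but do not prove: you invoke ``the instance of \cite[Conjecture 5.1]{LapMin25} for an $\fa$-saturated left factor,'' and then defer to ``the compatibility of the generic-extension construction with adjoining a single basic representation.'' That compatibility statement is exactly what is at stake and must be established, not appealed to. Related to this, your justification that $C(\fn')$ is module-theoretically $\fa$-reduced ``by contraposition via the uniqueness in \cite[Lemma 7.6]{LapMin25}'' is circular as written: to deduce from ``$C(\fn')$ contains an $\fa$-saturated submodule'' that ``$\Z(\fn')$ embeds into $\Z(\fm_1)\times\sigma$'' already uses the very socle interpretation of $*$ you are trying to establish. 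The correct (and non-circular) route, which is the one followed in \cite{LapMin25}, is to verify directly that the combinatorial recipe for the representation-theoretic $\fa$-decomposition (computed via $\rho$-derivatives) and the recipe for the module-theoretic one (computed via the Kashiwara--Saito crystal structure on $\Lambda$) coincide as operations on $\mq$; once that purely combinatorial identification is in place, $C(\fm)*C(\fn')=C(\fn)$ follows rather than being an input.

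In short: the skeleton is correct, but the two load-bearing steps (promotion to quotient plus uniqueness/multiplicity one, and the identity $C(\fn)=C(\fm)*C(\fn')$) are stated as obstacles rather than overcome, and the contraposition step as written presupposes what it is meant to prove. To turn this into a proof you should cite the $\rho$-derivative theorems of \cite{Mder}, \cite{Jader} explicitly for the first, and replace the appeal to Conjecture 5.1 by a direct comparison of the two combinatorial $\fa$-decompositions for the second.
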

We write in this case $\fm=\fa(\fn)$, $\fn'=\Dde(\fm)$ and call it the $\fa$-decomposition of $\fn$.
\begin{lemma}[{\cite[§7]{LapMin25}}]
    Let $\fn\in \Ms(\rho)$ be a balanced multisegment of length at least $2$. Then there exists a basic multisegment $\fa$ such that the following holds.
    \begin{enumerate}
        \item The composition is non-trivial, \emph{i.e.} $\fa(\fn),\,\Dde(\fn)\neq 0$.
        \item For all components $\fa'$ of $\fa$, $\Z(\fa')\times\Z(\fn)$ is irreducible.
        \item The decomposition preserves being balanced, \emph{i.e.} $\Dde(\fn)$ is balanced.
    \end{enumerate}
\end{lemma}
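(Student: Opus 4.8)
The plan is to read off this statement from its geometric counterpart \Cref{L:bala} via the dictionary already assembled in \Cref{T:zel}, \Cref{L:indint} and \Cref{L:indcomp}. First I would identify $\Ms(\rho)$ with $\mq$ and apply \Cref{L:bala} to $\fn$ (which is balanced of length at least $2$) to obtain a non-zero basic $\fa\in\mq\cong\Ms(\rho)$ such that $\fa(C(\fn))$ and $\Dde(C(\fn))$ are non-zero, $\Dde(C(\fn))=C(\fm')$ with $\fm'$ balanced, and $C(\fn)$ strongly commutes with every subcomponent of $C(\fa)$. This $\fa$ will be the basic multisegment claimed by the lemma.

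Properties (1) and (3) should then be immediate. By \Cref{L:indint} one has $C(\fa(\fn))=\fa(C(\fn))$ and $C(\Dde(\fn))=\Dde(C(\fn))$; since $C(-)$ is a bijection, non-vanishing of $\fa(C(\fn))$ and $\Dde(C(\fn))$ forces $\fa(\fn),\Dde(\fn)\neq 0$, which is (1), and $\Dde(C(\fn))=C(\fm')$ forces $\Dde(\fn)=\fm'$, which is balanced, which is (3).

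For (2), fix a component $\fa'$ of $\fa$. Being itself basic and regular, $C(\fa')$ is rigid, hence balanced, and it is a subcomponent of $C(\fa)$, so by \Cref{L:bala} the components $C(\fn)$ and $C(\fa')$ strongly commute. I would then invoke the fact (from \cite{AizLap}; note that the Crawley--Boevey identity recorded above gives $\dim_\C\Ex(x,y)=\dim_\C\Ex(y,x)$, so this is symmetric) that strong commutation of two components is equivalent to $\ex(C(\fa'),C(\fn))=0$, together with the proven (balanced) case of the irreducibility conjecture from \cite{MinLa18}, \cite{LapMin25}, applicable here because $C(\fa')$ is balanced, to conclude that $\Z(\fa')\times\Z(\fn)=\pi(C(\fa'))\times\pi(C(\fn))$ is irreducible. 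Equivalently, one could quote directly from \cite[§7]{LapMin25} that, when one side is balanced, strong commutation of components corresponds to irreducibility of the associated parabolic induction.

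The one genuinely non-formal point, and the place I expect to spend the most effort unless I simply cite \cite[§7]{LapMin25}, is the identification used above between the combinatorial ``components of $\fa$'' from the subsection on basic representations (the sub-multisegments $[a',b]_\rho$, resp.\ $[a',a']_\rho+\ldots+[b,b]_\rho$) and the geometric ``subcomponents of $C(\fa)$'' (components of $\Lambda$ containing a submodule of the rigid module $x_\fa$). Establishing this amounts to describing the indecomposable submodules of $x_\fa$ and checking that their closures are exactly the $C(\fa')$. Once that matching is in hand, the remainder is a mechanical transport of \Cref{L:bala} across the bijections, with no further representation-theoretic input.
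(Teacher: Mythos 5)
The paper does not prove this lemma at all; it is quoted from \cite[\S 7]{LapMin25}, just like its geometric counterpart \Cref{L:bala}. Your plan — to derive the representation-theoretic statement from \Cref{L:bala} via \Cref{L:indint} and the proven balanced case of the irreducibility criterion — is therefore not ``the same route as the paper'' (the paper takes none), but it is a sensible and essentially correct derivation. The transfer of properties (1) and (3) through \Cref{L:indint} is immediate exactly as you say, and for (2) the chain ``$C(\fn)$ strongly commutes with $C(\fa')$ $\Rightarrow$ $\ex(C(\fa'),C(\fn))=0$ $\Rightarrow$ $\Z(\fa')\times\Z(\fn)$ irreducible'' is sound, using that $\fa'$ is basic hence regular and balanced, so $\Z(\fa')\in\irs$ and the balanced case of the irreducibility conjecture (cf.\ \cite{MinLa18}, \cite{LapMin25}) applies.

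The point you flag as ``genuinely non-formal'' is indeed the only place where care is needed, and it is worth being more explicit than ``expect to spend effort or cite.'' For $\fa=[a,b]$, the submodules of the rigid $x_\fa$ (which has all $f$-arrows zero and all $e$-arrows invertible) are precisely $x_{[a',b]}$ with $a\le a'\le b$, so the subcomponents of $C(\fa)$ are the $C([a',b])$ and match the paper's ``components of $\fa$.'' For $\fa=[a,a]+\ldots+[b,b]$, however, the rigid $x_\fa$ has all $e$-arrows zero and the $f$-arrows forming a chain of isomorphisms $V_b\to V_{b-1}\to\cdots\to V_a$; its submodules are the downward-closed pieces, i.e.\ $C([a,a]+\ldots+[b',b'])$ with $a\le b'\le b$. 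These are the constituents appearing in the $\fa$-saturated decomposition $\Z(\fm)=\Z(\fa_1)\times\cdots\times\Z(\fa_k)$ with $\fa_i=[a,a]+\ldots+[b_i,b_i]$, as written in the paper. They do \emph{not} literally agree with the paper's displayed definition of a ``component of $\fa$'' in the second case, namely $[a',a']+\ldots+[b,b]$ — that appears to be a typo (the ranges should start at $a$, not end at $b$). So your matching goes through once this is corrected, but if you take the paper's wording at face value the identification ``component of $\fa$ $\leftrightarrow$ subcomponent of $C(\fa)$'' fails for the second basic type, and part (2) would not follow. You should make the submodule computation explicit and note the discrepancy rather than leave it as a one-line ``check.''
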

\section{Intertwining operators}\label{S:intop}
Let $\pi,\sigma\in\rep$ and $s\in \C$. We recall the intertwining operators \[M_{\pi,\sigma}(s)\colon \sigma\abs^s\times\pi\abs^{-s}\ra\pi\abs^{-s}\times\sigma\abs^s\]
defined by 
\[f\mapsto\left(g\mapsto \int_{U_{\deg(\pi),\deg(\sigma)}}f(w_{\deg(\sigma),\deg(\pi)}ug)\,\mathrm{d}u\right),\] which converges for $\mathrm{Re}(s)>>0$. One can continue $M_{\pi,\sigma}(s)$ meromorphically to the whole complex plane and we denote by $\LL(\sigma,\pi)$ the order of the pole of the intertwining operator
$M_{\pi,\sigma}(s)$ at $s=0$.
It follows that \[M_{\pi,\sigma}\coloneq \restr{s^{\Lambda(\pi,\sigma)}M_{\pi,\sigma}(s)}{s=0}\colon \sigma\times\pi\ra\pi\times\sigma\]
defines a non-zero morphism.
Similarly, we let $\alpha(\pi,\sigma)$ the order of zero of
\[M_{\sigma,\pi}(s)\circ M_{\pi,\sigma}(s)\] at $s=0$.
We recall the following properties.
\begin{theorem}[{\cite[§IV]{Wal03}}]\label{T:intertwin}
Let $\pi,\pi',\sigma,\sigma'\in\rep$.
\begin{enumerate}
    \item If $\sigma$ is irreducible, then
    \[\Lambda(\pi\times\pi',\sigma)=\LL(\pi,\sigma)+\LL(\pi',\sigma).\]
    \item If $\sigma'\hra \sigma, \pi'\hra \pi$, there exists $\lambda\in\C$ such that the following diagram commutes.
    \[\begin{tikzcd}
        \sigma\times \pi\arrow[rr,"M_{\pi,\sigma}"]&&\pi\times\sigma\\
        \sigma'\times\pi'\arrow[rr,"\lambda M_{\pi',\sigma'}"]\arrow[u,hookrightarrow]&&\pi'\times\sigma'\arrow[u,hookrightarrow]
    \end{tikzcd}\]
    Moreover, $\lambda\neq 0$ if and only if $\Lambda(\pi,\sigma)=\Lambda(\pi',\sigma')$.
    \item We have \[\alpha(\pi,\sigma)=\alpha(\sigma,\pi)\] and for any subquotient $\omega$ of $\pi\times\pi'$ we have that 
    \[\alpha(\omega,\sigma)=\alpha(\pi,\sigma)+\alpha(\pi',\sigma).\]
    \item If $\pi$ and $\sigma$ are subquotients of representations induced from irreducible representations, the map $M_{\pi,\sigma}\circ M_{\sigma,\pi}$ is a scalar.
    \item We have $\Lambda(\pi,\sigma)=\Lambda(\sigma^\lor,\pi^\lor)$ and $\alpha(\pi,\sigma)=\alpha(\pi^\lor,\sigma^\lor)$.
\end{enumerate}
\end{theorem}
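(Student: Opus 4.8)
The plan is to reduce everything to the rank-one calculations for $\gl_1(\Ff)$ and the standard functional-equation/composition properties of the global-to-local intertwining integrals, following the arguments of \cite[§IV]{Wal03}. I treat the five assertions in turn, each a consequence of the basic formalism of the operators $M_{\pi,\sigma}(s)$ once one has: (a) the cocycle relation $M_{\pi,\sigma_1\times\sigma_2}(s)$ factoring through a composition of intertwining operators associated to a decomposition of the long Weyl element, and (b) the fact that for $\rho,\rho'\in\cus$ on a common line the scalar $M_{\rho',\rho}\circ M_{\rho,\rho'}$ has a zero/pole of the order dictated by the local $L$- and $\gamma$-factors, which is exactly what is tabulated in the segment/multisegment combinatorics.

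For (1), write $M_{\pi\times\pi',\sigma}(s)$ as the composite, along a reduced decomposition of the relevant Weyl element, of the operator inducing $\sigma\abs^s$ past $\pi\abs^{-s}$ and then past $\pi'\abs^{-s}$; the cocycle relation gives
\[
M_{\pi\times\pi',\sigma}(s)=\bigl(\mathrm{id}_{\pi}\times M_{\pi',\sigma}(s)\bigr)\circ\bigl(M_{\pi,\sigma}(s)\times\mathrm{id}_{\pi'}\bigr)
\]
up to normalization, and since $\sigma$ is irreducible the induced operators on the two outer factors are nonzero; hence the pole order at $s=0$ of the composite is the sum of the pole orders of the two factors, which is the claim. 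For (5), apply the functor $(-)^{\lor}$: it intertwines $\sigma\abs^s\times\pi\abs^{-s}$ with $\pi^{\lor}\abs^{s}\times\sigma^{\lor}\abs^{-s}$ and carries $M_{\pi,\sigma}(s)$ to a scalar multiple of $M_{\sigma^{\lor},\pi^{\lor}}(s)$ (the long Weyl element is self-dual), so the pole orders agree; the statement for $\alpha$ is then immediate from its definition as the zero order of a composite, and duality applied to the two factors.

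For (2), observe that the integral defining $M_{\pi,\sigma}(s)$ is functorial in both arguments: an embedding $\sigma'\hra\sigma$, $\pi'\hra\pi$ yields a commuting square with the bottom map a scalar multiple $\lambda(s)M_{\pi',\sigma'}(s)$, and specializing the leading Laurent coefficient at $s=0$ gives the square in the statement with $\lambda=\lambda(0)\neq 0$ precisely when the two operators have the same pole order, i.e. $\Lambda(\pi,\sigma)=\Lambda(\pi',\sigma')$; the only subtle point is that the embeddings can be chosen $s$-independently, which holds because parabolic induction is exact. For (3), the symmetry $\alpha(\pi,\sigma)=\alpha(\sigma,\pi)$ is the observation that $M_{\sigma,\pi}(s)\circ M_{\pi,\sigma}(s)$ and $M_{\pi,\sigma}(s)\circ M_{\sigma,\pi}(s)$ are related by the swap of the two inducing data, which replaces $s$ by $-s$ and fixes the zero order at $0$; the additivity in $\omega$ follows from (1) applied to both $M_{\omega,\sigma}$ and $M_{\sigma,\omega}$, using that (1) holds with the roles of the two arguments interchanged (by the same cocycle argument on the other side). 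For (4), $M_{\pi,\sigma}\circ M_{\sigma,\pi}$ is a $G$-endomorphism of $\pi\times\sigma$; when $\pi,\sigma$ are subquotients of representations induced from irreducibles, $\pi\times\sigma$ embeds into such an induced representation whose endomorphism ring is computed by the geometric lemma to be commutative and acting by scalars on the relevant constituent, so the composite is a scalar.

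The main obstacle is assertion (4) together with the additivity clauses in (1) and (3): making precise the claim that the intertwining integral composes multiplicatively requires the rank-one reduction and a careful choice of measures and of the reduced decomposition of the Weyl element, and the scalarity in (4) ultimately rests on the multiplicity-one statements of \Cref{T:zel}(3),(6) — so the real content is organizing the induction on the number of cuspidal segments and invoking the known rank-one pole orders, rather than any new idea. I would present these as consequences of \cite{Wal03}, spelling out only the cocycle relation and the duality functor carefully, and cite the rest.
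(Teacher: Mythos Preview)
The paper does not prove this theorem; it is stated with attribution to \cite[\S IV]{Wal03} and used as a black box. So there is no proof in the paper to compare your sketch against, and your own concluding remark (``I would present these as consequences of \cite{Wal03}'') is exactly what the paper does.

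That said, two points in your sketch are genuinely glossed over. In (1), writing $M_{\pi\times\pi',\sigma}(s)$ as a composite via the cocycle relation only gives $\Lambda(\pi\times\pi',\sigma)\le\Lambda(\pi,\sigma)+\Lambda(\pi',\sigma)$; equality requires that the composition of the \emph{leading} terms $(\mathrm{id}_\pi\times M_{\pi',\sigma})\circ(M_{\pi,\sigma}\times\mathrm{id}_{\pi'})$ be nonzero, and ``each factor is nonzero'' does not imply this. This is where the irreducibility of $\sigma$ is actually used in Waldspurger's argument (via the open-cell subspace, on which the integrals are visibly nonsingular), not merely to ensure the individual factors are nonzero. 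In (3), your derivation of additivity from (1) does not work: (1) concerns $\Lambda$, not $\alpha$, and in any case gives a statement about $\pi\times\pi'$, not about an arbitrary subquotient $\omega$ of it. The actual mechanism is that, by (4) and the rank-one reduction, the scalar $M_{\sigma,\cdot}(s)\circ M_{\cdot,\sigma}(s)$ depends only on the cuspidal support of the second argument; since $\omega$ and $\pi\times\pi'$ have the same cuspidal support, $\alpha(\omega,\sigma)=\alpha(\pi\times\pi',\sigma)$, and the latter factors by the cocycle relation. Your arguments for (2) and (5) are fine, and (4) is correctly identified as resting on the rank-one scalar computation plus multiplicity one.
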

\begin{lemma}\label{L:basecase}
Let $\rho,\rho'\in \cus$. Then 
\[ \alpha(\rho,\rho')=\begin{cases}
    -2&\text{if }\rho'\cong \rho,\\1&\text{if }\rho\cong \rho\nr^{\pm1},\\0&\text{otherwise}.
\end{cases}\]\[\LL(\rho,\rho')=\begin{cases}
    1&\text{if }\rho'\cong \rho,\\0&\text{otherwise}.
\end{cases}.\]
\end{lemma}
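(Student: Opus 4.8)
The plan is to reduce both computations to three ingredients: (i) the convergence range of the defining intertwining integral, (ii) Schur's lemma together with the multiplicativity and duality properties collected in \Cref{T:intertwin}, and (iii) the Langlands--Shahidi description of $M_{\rho,\rho'}(s)$ and of the Harish-Chandra $j$-factor $M_{\rho',\rho}(s)\circ M_{\rho,\rho'}(s)$ in terms of Rankin--Selberg $L$-factors; only the last is genuinely non-formal. I would organise the proof around the cases $[\rho']\notin\ZZ[\rho]$ and $\rho'\cong\rho\abs^{k}$ with $k\in\ZZ$ (the middle clause of the statement of course reads $\rho'\cong\rho\abs^{\pm1}$). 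The case $[\rho']\notin\ZZ[\rho]$ is immediate: then $\rho'\times\rho$ is irreducible and equals $\rho\times\rho'$, the pertinent $L$-factor is trivial, $M_{\rho,\rho'}(s)$ is holomorphic and invertible at $s=0$, so $\LL(\rho,\rho')=0$, and $M_{\rho',\rho}(s)\circ M_{\rho,\rho'}(s)$ is a nowhere-vanishing holomorphic scalar, whence $\alpha(\rho,\rho')=0$.

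Assume now $\rho'\cong\rho\abs^{k}$ and set $L(u):=L(u,\rho\times\rho^\lor)$; since $\rho$ is cuspidal, $L(u)$ has a simple pole at $u=0$ and is holomorphic and nonzero at every other real $u$. For the pole order I would argue as follows: the intertwining integral for $M_{\rho,\rho'}(s)$ converges near $s=0$ once the difference of exponents $k+2s$ has positive real part there, i.e.\ for $k\ge1$, so $\LL(\rho,\rho')=0$ for $k\ge1$; and in general the Langlands--Shahidi normalisation shows that the order of the pole of $M_{\rho,\rho'}(s)$ at $s=0$ equals the order of the pole of $\tfrac{L(2s+k)}{L(2s+k+1)}$ at $s=0$, which is $1$ if $k=0$ and $0$ otherwise. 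For $k=0$ the value $\LL(\rho,\rho)=1$ can alternatively be extracted from $\alpha(\rho,\rho)=-2$: as $\rho\times\rho$ is irreducible, Schur's lemma forces the leading Laurent coefficient at $s=0$ of $M_{\rho,\rho}(s)$, and that of the second operator in the composition of \Cref{T:intertwin}(4), to be invertible scalar operators; hence no cancellation occurs and $\alpha(\rho,\rho)=\mathrm{ord}_{s=0}\bigl(M_{\rho,\rho}(s)\circ M_{\rho,\rho}(s)\bigr)=-2\,\LL(\rho,\rho)$.

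It then remains to compute $\alpha(\rho,\rho')$ for $\rho'\cong\rho\abs^{k}$ (recall $\alpha(\rho,\rho')=\alpha(\rho',\rho)$ by \Cref{T:intertwin}(3)). For $|k|\ge2$ this is again easy: $\rho'\times\rho$ is irreducible and $M_{\rho,\rho'}(s)$ is holomorphic and invertible at $s=0$ (by the convergence range for $k\ge2$, and by \Cref{T:intertwin}(5) and (3) for $k\le-2$), so $\alpha(\rho,\rho')=0$. The remaining values $\alpha(\rho,\rho)=-2$ and $\alpha(\rho,\rho\abs^{\pm1})=1$ are the heart of the statement, and I expect this to be the main obstacle: at the reducibility-adjacent points $\rho'\cong\rho\abs^{\pm1}$ the representation $\rho'\abs^{s}\times\rho\abs^{-s}$ is already reducible at $s=0$, so no Schur-type shortcut is available and one must appeal to the genuine analysis of the intertwining operator. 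I would extract these values from the explicit Harish-Chandra $j$-factor attached to the cuspidal datum $\rho\abs^{-s}\otimes\rho'\abs^{s}$ on a maximal Levi of $\gl_{2\deg(\rho)}(\Ff)$ --- equivalently, the explicit Plancherel measure there (\emph{cf.}\ \cite{Wal03}, \cite{Sha83}): up to a holomorphic nowhere-vanishing factor,
\[M_{\rho',\rho}(s)\circ M_{\rho,\rho'}(s)=\frac{L(2s+k)\,L(-2s-k)}{L(2s+k+1)\,L(-2s-k+1)},\]
whose order at $s=0$ is $-2$ for $k=0$ (a double pole, from $L(2s)\,L(-2s)$) and $+1$ for $k=\pm1$ (a simple pole of $L(-2s)$, resp.\ $L(2s)$, now in the denominator). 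Reading off these orders gives the claimed values of $\alpha$, and together with the previous paragraph finishes the proof.
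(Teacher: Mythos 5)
Your argument is correct, and it is genuinely different from what the paper does: the paper's proof is a one-line citation to Dat (\cite[Propositions 7.5 \& 8.4]{Dat}), whereas you reconstruct the computation from first principles. The substance of your argument is the standard Langlands--Shahidi / Rankin--Selberg normalization, and your formula for the Harish-Chandra $j$-factor
\[M_{\rho',\rho}(s)\circ M_{\rho,\rho'}(s)\doteq\frac{L(2s+k)\,L(-2s-k)}{L(2s+k+1)\,L(-2s-k+1)},\qquad L(u)=L(u,\rho\times\rho^\lor),\]
(up to a nowhere-vanishing holomorphic factor) is the correct one; reading off orders at $s=0$ gives $\alpha=-2,1,0$ in the three cases, and the pole order of the one-sided operator is governed by $L(2s+k)/L(2s+k+1)$, giving $\LL(\rho,\rho)=1$ and $\LL=0$ otherwise. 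What each approach buys: the paper's citation is clean and avoids re-deriving the $j$-factor, which is itself a deep input (Shahidi, Jacquet--Piatetski-Shapiro--Shalika); your version makes the provenance of the numbers $-2,1,0$ transparent and ties the lemma explicitly to the normalization factor $r(s,\cdot,\cdot)$ that the paper later introduces in \Cref{S:norint}, which is genuinely illuminating in context. Two small remarks: (i) in the $|k|\ge 2$ step your appeal to \Cref{T:intertwin}(5) for $k\le-2$ is not needed and does not by itself reduce to the convergent range (it returns the same $k$ after an unramified twist); what actually does the job is \Cref{T:intertwin}(3) (symmetry of $\alpha$) together with convergence of the \emph{reverse} operator $M_{\rho\abs^k,\rho}(s)$ at $s=0$, combined with the vanishing/nonvanishing of the $j$-factor there. (ii) For $k=\pm1$ the unnormalized operator is not given by a convergent integral (for $k=-1$) and the induced representation at $s=0$ is reducible, so one cannot invoke Schur or holomorphy of the normalized operator off the shelf; the cleanest route there really is, as you do in the end, to read everything off the $j$-factor and the one convergent side.
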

\begin{proof}
    For the computation of $\alpha$ and $\Lambda$ see for example \cite[Proposition 7.5 \& 8.4]{Dat}.
\end{proof}
For $\pi_1,\pi_2\in \Irr$, we define
\[\mathfrak{d}(\pi_1,\pi_2)\coloneq \Lambda(\pi_1,\pi_2)+\Lambda(\pi_2,\pi_1)+\alpha(\pi_1,\pi_2)\ge 0.\]
We say $\pi_1$ and $\pi_2$ \emph{strongly commute} if $\fd(\pi_1,\pi_2)=0$. The following is a direct consequence of \Cref{T:intertwin}.
\begin{lemma}\label{L:strcom}
    Let $\pi_1,\pi_2\in\irr$. The following are then equivalent.
    \begin{enumerate}
        \item The representations $\pi_1,\pi_2$ strongly commute.
        \item $M_{\pi_1,\pi_2}$ is an isomorphism.
        \item $M_{\pi_2,\pi_1}$ is an isomorphism.
        \item $M_{\pi_1,\pi_2}\circ M_{\pi_2,\pi_1}$ is non-zero.
    \end{enumerate}
\end{lemma}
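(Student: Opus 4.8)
I would deduce all four equivalences from \Cref{T:intertwin}, the essential inputs being that the normalized operators $M_{\pi,\sigma}$ are non-zero by construction, that the relevant compositions are scalars (part (4)), and that $\alpha$ is symmetric (part (3)). Write $N_{\pi,\sigma}(s)=s^{\Lambda(\pi,\sigma)}M_{\pi,\sigma}(s)$; by the very definition of $\Lambda(\pi,\sigma)$ this family is regular at $s=0$ with $N_{\pi,\sigma}(0)=M_{\pi,\sigma}\neq 0$. First I would record that $N_{\pi_1,\pi_2}(s)\circ N_{\pi_2,\pi_1}(s)$ (formed with the same identification of spaces used to define $\alpha$) equals $s^{\Lambda(\pi_1,\pi_2)+\Lambda(\pi_2,\pi_1)}\bigl(M_{\pi_1,\pi_2}(s)\circ M_{\pi_2,\pi_1}(s)\bigr)$, so it is regular at $s=0$ with value $M_{\pi_1,\pi_2}\circ M_{\pi_2,\pi_1}$ and order of vanishing $\Lambda(\pi_1,\pi_2)+\Lambda(\pi_2,\pi_1)+\alpha(\pi_2,\pi_1)$ there --- the last summand being exactly the order of zero of $M_{\pi_1,\pi_2}(s)\circ M_{\pi_2,\pi_1}(s)$ in the definition of $\alpha(\pi_2,\pi_1)$. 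By \Cref{T:intertwin}(3), $\alpha(\pi_2,\pi_1)=\alpha(\pi_1,\pi_2)$, so this order equals $\fd(\pi_1,\pi_2)$. Consequently $\fd(\pi_1,\pi_2)=0$ if and only if $M_{\pi_1,\pi_2}\circ M_{\pi_2,\pi_1}\neq 0$, which is precisely (1) $\Leftrightarrow$ (4); as a byproduct, regularity of the composite re-proves $\fd(\pi_1,\pi_2)\geq 0$.

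For the remaining implications, the two maps $M_{\pi_1,\pi_2}$ and $M_{\pi_2,\pi_1}$ are non-zero, so (2) $\Rightarrow$ (4) and (3) $\Rightarrow$ (4) are immediate, since an isomorphism composed with a non-zero map is non-zero. Conversely, assume (1). Then (4) holds, and by \Cref{T:intertwin}(4) both $M_{\pi_1,\pi_2}\circ M_{\pi_2,\pi_1}$ and $M_{\pi_2,\pi_1}\circ M_{\pi_1,\pi_2}$ are scalar endomorphisms; the first is non-zero by (4), and the second is non-zero by the computation of the previous paragraph applied with the two representations interchanged (its non-vanishing is equivalent to $\fd(\pi_2,\pi_1)=0$, which equals $\fd(\pi_1,\pi_2)=0$ by \Cref{T:intertwin}(3)). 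Hence $M_{\pi_1,\pi_2}$ has both a left inverse and a right inverse (suitable scalar multiples of $M_{\pi_2,\pi_1}$), so it is an isomorphism, and likewise $M_{\pi_2,\pi_1}$; this gives (1) $\Rightarrow$ (2) and (1) $\Rightarrow$ (3), closing the loop.

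I do not anticipate a genuine difficulty here: the argument is essentially the bookkeeping of orders of poles and zeros at $s=0$. The two points needing care are (i) that composing the regular families $N_{\pi_1,\pi_2}(s)$ and $N_{\pi_2,\pi_1}(s)$ creates no extra vanishing beyond the visible factor $s^{\Lambda(\pi_1,\pi_2)+\Lambda(\pi_2,\pi_1)}$ --- which is exactly what the identity $\fd=\Lambda(\pi_1,\pi_2)+\Lambda(\pi_2,\pi_1)+\alpha$ encodes, once $\alpha(\pi_2,\pi_1)$ is recognized as the order of zero of $M_{\pi_1,\pi_2}(s)\circ M_{\pi_2,\pi_1}(s)$ --- and (ii) that these compositions are scalars, which is \Cref{T:intertwin}(4) and is what lets one pass from a non-zero composition to an isomorphism.
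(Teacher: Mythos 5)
Your proof is correct and is exactly the argument the paper has in mind: the lemma is stated there without proof as a ``direct consequence of \Cref{T:intertwin},'' and your fleshed-out version — equating $\fd(\pi_1,\pi_2)$ with the order of vanishing at $s=0$ of the regular family $s^{\Lambda(\pi_1,\pi_2)+\Lambda(\pi_2,\pi_1)}M_{\pi_1,\pi_2}(s)\circ M_{\pi_2,\pi_1}(s)$ to get (1) $\Leftrightarrow$ (4), and then using \Cref{T:intertwin}(3) and (4) to upgrade a non-zero scalar composite in both orders to two-sided invertibility — is the intended bookkeeping.
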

\begin{theorem}[{\cite[§2]{MinLa18}}]\label{T:si}
A representation $\pi\in \Irr$ is called $\square$-irreducible if one of the following equivalent conditions holds.
\begin{enumerate}
    \item $\pi\times\pi\in \Irr$.
    \item $M_{\pi,\pi}$ is a scalar.
    \item For all $\sigma\in \Irr$, $\pi\times\sigma$ is SI.
    \item For all $\sigma\in \Irr$, $\sigma\times \pi$ is SI.
\end{enumerate}
\end{theorem}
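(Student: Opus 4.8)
The plan is to prove the three short implications $(1)\Rightarrow(2)$, $(3)\Rightarrow(2)$, $(4)\Rightarrow(2)$, and then the block $(2)\Rightarrow(1)$, $(2)\Rightarrow(3)$, $(2)\Rightarrow(4)$, in which all the work sits; since $(4)$ is the mirror of $(3)$ under reversing the order of the factors, the implications involving $(4)$ follow from those involving $(3)$ together with \Cref{T:intertwin}(5).

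\emph{Easy implications.} $(1)\Rightarrow(2)$ is Schur's lemma: if $\pi\times\pi\in\Irr$ then $\mathrm{End}_G(\pi\times\pi)=\C$, so the non-zero endomorphism $M_{\pi,\pi}$ is a scalar. For $(3)\Rightarrow(2)$ --- and identically $(4)\Rightarrow(2)$ --- take $\sigma=\pi$: then $V\coloneq\pi\times\pi$ is SI, so its socle $L$ is irreducible and occurs with multiplicity one in $V$; embedding $V$ essentially in the injective hull $E(L)$ and using that $\ho_G(-,E(L))$ is exact with $\ho_G(S,E(L))=\delta_{S,L}$ for simple $S$,
\[\dim_\C\mathrm{End}_G(V)\le\dim_\C\ho_G(V,E(L))=(\text{multiplicity of }L\text{ in }V)=1,\]
so $\mathrm{End}_G(V)=\C$ and $M_{\pi,\pi}$ is again forced to be a scalar.

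\emph{The crux.} The block $(2)\Rightarrow(1),(3),(4)$ says, in essence, that \emph{scalarity of $M_{\pi,\pi}$ makes the functor $\pi\times(-)$ well behaved}, and I would prove it by transporting the $R$-matrix calculus of Kang--Kashiwara--Kim--Oh to $\gl_n(\Ff)$: the normalized operators $M_{\tau_1,\tau_2}\colon\tau_2\times\tau_1\to\tau_1\times\tau_2$ play the role of $R$-matrices --- they are non-zero, unique up to a scalar between fixed irreducibles, satisfy (up to non-zero scalars, using the additivity of $\Lambda$ in \Cref{T:intertwin}) the coherence identities
\[M_{\tau,\,\tau_1\times\tau_2}=(M_{\tau,\tau_1}\times\mathrm{id}_{\tau_2})\circ(\mathrm{id}_{\tau_1}\times M_{\tau,\tau_2}),\qquad M_{\tau_1\times\tau_2,\,\tau}=(\mathrm{id}_{\tau_1}\times M_{\tau_2,\tau})\circ(M_{\tau_1,\tau}\times\mathrm{id}_{\tau_2}),\]
and have scalar self-composition by \Cref{T:intertwin}(4). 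Granting $(2)$, one first gets $(2)\Rightarrow(1)$: running the $R$-matrix argument for $M_{\pi,\pi}$ itself --- using the coherence identities on $\pi\times\pi\times\pi$, on which $M_{\pi,\pi}\times\mathrm{id}$ and $\mathrm{id}\times M_{\pi,\pi}$ are then scalars --- one shows that the image of $M_{\pi,\pi}$ is irreducible; but $M_{\pi,\pi}$ is a non-zero scalar, so its image is all of $\pi\times\pi$, which is thus irreducible. With $(1)$ in hand one then gets $(2)\Rightarrow(3)$: for $\sigma\in\Irr$ and non-zero $f\in\ho_G(\sigma\times\pi,\pi\times\sigma)$, the first coherence identity with $(\tau,\tau_1,\tau_2)=(\pi,\pi,\sigma)$ reads
\[M_{\pi,\,\pi\times\sigma}\circ(f\times\mathrm{id}_\pi)=(M_{\pi,\pi}\times\mathrm{id}_\sigma)\circ(\mathrm{id}_\pi\times M_{\pi,\sigma})\circ(f\times\mathrm{id}_\pi);\]
invertibility of $M_{\pi,\pi}$ together with uniqueness of the $R$-matrix (applied to the socles and cosocles involved) forces $f\in\C\,M_{\sigma,\pi}$, so $\dim_\C\ho_G(\sigma\times\pi,\pi\times\sigma)=1$; hence $\soc(\pi\times\sigma)$ is irreducible by \Cref{L:easylemma}, and the same analysis of socles and cosocles gives the multiplicity-one statement, so $\pi\times\sigma$ is SI. (Equivalently: for $\square$-irreducible $\pi$ the image of $M_{\sigma,\pi}$ is irreducible and equals both $\soc(\pi\times\sigma)$ and $\cosoc(\sigma\times\pi)$.) The implication $(2)\Rightarrow(4)$ is identical with the second coherence identity.

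\emph{Main obstacle.} The difficulty is concentrated in this last block. The delicate points --- establishing the coherence identities for the \emph{normalized} operators (not merely the meromorphic families), controlling how $M_{\sigma,\pi}$ spans $\ho_G(\sigma\times\pi,\pi\times\sigma)$ once $M_{\pi,\pi}$ is scalar, and proving that the image of $M_{\pi,\pi}$ is irreducible --- are where one genuinely needs the full strength of \cite{MinLa18} and the parallel arguments of Kang--Kashiwara--Kim--Oh for quantum affine algebras. Everything else is formal.
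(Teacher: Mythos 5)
The paper does not give its own proof: it simply cites $\text{\cite[\S 2]{MinLa18}}$ for this result, so there is no in-paper argument to compare against. Your easy implications are fine: $(1)\Rightarrow(2)$ is Schur, and the injective-hull argument for $(3)\Rightarrow(2)$ (and its mirror $(4)\Rightarrow(2)$) is a standard and correct way to extract $\mathrm{End}(\pi\times\pi)=\C$ from the SI property. The coherence identities you state do hold up to non-zero scalars when the ``outer'' factor is irreducible, exactly by the additivity of $\Lambda$ in \Cref{T:intertwin}(1)--(2).

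The gap is in $(2)\Rightarrow(1)$, which is the linchpin of the whole block, and as written the argument is circular. You want to conclude that $\pi\times\pi$ is irreducible by ``showing that the image of $M_{\pi,\pi}$ is irreducible.'' But once you have granted $(2)$, $M_{\pi,\pi}$ is a non-zero scalar, so its image \emph{is} $\pi\times\pi$; the claim ``$\mathrm{im}\,M_{\pi,\pi}$ is irreducible'' is therefore literally the conclusion $(1)$ you are trying to establish, not an intermediate step. There is no general principle that produces irreducibility of $\mathrm{im}\,M_{\pi,\pi}$ for an arbitrary irreducible $\pi$; in the Kang--Kashiwara--Kim--Oh calculus you invoke, simplicity of $\mathrm{im}\,r_{M,N}$ is proved precisely under the hypothesis that $M$ or $N$ is real, i.e.\ under $(1)$. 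Moreover, the coherence identities on $\pi\times\pi\times\pi$ give you nothing here: once $M_{\pi,\pi}$ is a scalar, both $M_{\pi,\pi}\times\mathrm{id}$ and $\mathrm{id}\times M_{\pi,\pi}$ are scalars and the identities become trivially true, so they cannot be used to extract irreducibility. The implications $(2)\Rightarrow(3),(4)$ as you present them explicitly rely on $(1)$ already being in hand, so they inherit the same gap.

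What is actually needed for $(2)\Rightarrow(1)$ is a different input, and your sketch does not supply it. One standard route (essentially what \cite{MinLa18} does) is to combine the Zelevinsky multiplicity-one fact (\Cref{T:zel}(3),(6)): $\Z(2\fm)$ is simultaneously $\soc(\pi\times\pi)$ and $\cosoc(\pi\times\pi)$ and appears with multiplicity one in $\pi\times\pi$ --- with the identification of $\ker M_{\pi,\pi}$ (or of $\mathrm{im}\,M_{\pi,\pi}$) in terms of the Jacquet module via the Geometric Lemma, as in the proof of \Cref{L:polevanish}. Once $M_{\pi,\pi}$ is a scalar hence injective, this forces every constituent of $\pi\times\pi$ to coincide with $\Z(2\fm)$, giving $(1)$. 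That argument genuinely uses the $p$-adic Jacquet-module machinery and is not a formal consequence of $R$-matrix coherence. You correctly identify this block as where the difficulty sits, but the sketch as given does not resolve it.
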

Let us note the following easy lemma.
\begin{lemma}
    Let $\pi\in \Irr$. Then $\pi$ is $\square$-irreducible if and only if
    for all $\sigma\in \Irr$, \[\dim_\C\ho_{G_{\deg(\pi\times\sigma)}}(\pi\times\sigma,\sigma\times\pi)=\dim_\C\ho_{G_{\deg(\pi\times\sigma)}}(\sigma\times\pi,\pi\times\sigma)=1.\]
\end{lemma}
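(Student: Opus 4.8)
The statement to prove is: $\pi\in\Irr$ is $\square$-irreducible if and only if for all $\sigma\in\Irr$ we have $\dim_\C\ho_{G}(\pi\times\sigma,\sigma\times\pi)=\dim_\C\ho_{G}(\sigma\times\pi,\pi\times\sigma)=1$ (with $G=G_{\deg(\pi\times\sigma)}$). My plan is to deduce everything from \Cref{T:si} and \Cref{L:strcom}, together with the elementary observation that parabolic induction is compatible with contragredient duality up to isomorphism, which the paper has already recorded before \Cref{L:easylemma}.

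First I would prove the easy direction: assume $\pi$ is $\square$-irreducible. By \Cref{T:si}(3) and (4), $\pi\times\sigma$ and $\sigma\times\pi$ are both SI for every $\sigma\in\Irr$, so in particular they have irreducible socle. Now $M_{\sigma,\pi}\colon \pi\times\sigma\to\sigma\times\pi$ (the specialization at $s=0$ of the meromorphically continued intertwining operator, normalized to be non-zero) is a non-zero morphism. To see that the Hom-space is exactly one-dimensional, I would argue that any non-zero $\varphi\colon \pi\times\sigma\to\sigma\times\pi$ must be injective: its kernel is a proper subrepresentation, but then $\varphi$ factors through $(\pi\times\sigma)/\ker\varphi$, and using that $\pi\times\sigma$ is SI (so its unique irreducible sub is not a sub of the quotient) together with the fact that $\sigma\times\pi$ and $\pi\times\sigma$ have the same composition factors (they have the same cuspidal support and in fact $[\pi\times\sigma]=[\sigma\times\pi]$ in $\KK$), one forces $\varphi$ to be injective. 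Dually $\varphi$ is surjective onto the image which must contain the socle; a cleaner route is to use \Cref{L:easylemma}-style reasoning: any two non-zero maps $\pi\times\sigma\to\sigma\times\pi$ differ, after composing appropriately, by a scalar because $\soc(\pi\times\sigma)$ is irreducible and appears with multiplicity one. Concretely, if $\varphi_1,\varphi_2$ are two such maps, both are injective (by the SI property the socle embeds, and the image of an injection from an SI representation is determined), hence both are isomorphisms onto $\sigma\times\pi$ by counting composition factors, and $\varphi_1^{-1}\varphi_2$ is an automorphism of the SI representation $\pi\times\sigma$, hence a scalar by Schur applied to the irreducible socle. The same argument with the roles of $\pi\times\sigma$ and $\sigma\times\pi$ exchanged gives the other equality.

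For the converse, assume $\dim_\C\ho_G(\pi\times\sigma,\sigma\times\pi)=\dim_\C\ho_G(\sigma\times\pi,\pi\times\sigma)=1$ for all $\sigma$. Taking $\sigma=\pi$ gives $\dim_\C\ho_G(\pi\times\pi,\pi\times\pi)=1$, so $\pi\times\pi$ has a one-dimensional endomorphism algebra. Then I would invoke \Cref{L:easylemma}: the hypothesis $\dim_\C\ho_{G}(\pi\times\pi,\pi\times\pi)=1$ forces $\soc(\pi\times\pi)$ to be irreducible, and symmetrically $\cosoc(\pi\times\pi)$ is irreducible; a representation with irreducible socle, irreducible cosocle, and one-dimensional endomorphism ring which is moreover self-dual (since $(\pi\times\pi)^\lor\cong\pi^\lor\times\pi^\lor$ and we may as well replace $\pi$ by $\pi^\lor$ throughout, as $\pi$ is $\square$-irreducible iff $\pi^\lor$ is) must be irreducible — indeed the composition $\pi\times\pi\twoheadrightarrow\cosoc\hookrightarrow(\pi\times\pi)^\lor$-type argument, or more simply: if $\pi\times\pi$ were reducible it would have $\soc$ a proper sub, the quotient would be non-zero with its own socle, giving a second linearly independent endomorphism (the projection onto a sub followed by an inclusion) unless socle and cosocle coincide and the whole thing is uniserial of length $\ge 2$, but length $\ge 2$ uniserial with $\soc=\cosoc$ is impossible. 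Hence $\pi\times\pi$ is irreducible, which is condition (1) of \Cref{T:si}, so $\pi$ is $\square$-irreducible.

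The main obstacle I anticipate is the bookkeeping in the converse direction: getting from ``one-dimensional Hom-spaces'' to ``$\pi\times\pi$ irreducible'' cleanly. The cleanest implementation is probably to note that $\pi\times\pi$ is always SI once $\dim\ho(\pi\times\pi,\pi\times\pi)=1$ forces irreducible socle, combine with the self-duality $(\pi\times\pi)^\lor\cong\pi^\lor\times\pi^\lor$ and the fact that the Hom-hypothesis applied to $\pi^\lor$ (equivalently, dualizing) gives irreducible cosocle, and then use that an SI representation with irreducible cosocle whose endomorphism ring is $\C$ has length one — if it had length $\ge 2$, the non-split extension of cosocle by the radical would furnish, via \Cref{T:intertwin}(2) applied to the inclusion $\soc(\pi\times\pi)\times\pi\hookrightarrow\pi\times\pi$ or directly, a nilpotent endomorphism, contradicting $\mathrm{End}=\C$. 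Everything else is a direct translation through \Cref{T:si} and \Cref{L:strcom} and should be routine.
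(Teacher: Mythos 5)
Your overall strategy matches the paper's in spirit (both directions hinge on \Cref{T:si}), but both halves of your argument have real problems that the paper's proof avoids.

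In the converse direction (Hom-dimension $1$ for all $\sigma$ implies $\square$-irreducible), you take $\sigma=\pi$ as the paper does, but then you try to show directly that $\pi\times\pi$ is irreducible (condition~(1) of \Cref{T:si}), which sends you through a chain of fragile structural claims: the assertion that a uniserial module of length $\ge 2$ with $\soc=\cosoc$ is impossible is simply false (think of $\C[x]/(x^2)$ over itself), and the invocation of \Cref{T:intertwin}(2) via an ``inclusion $\soc(\pi\times\pi)\times\pi\hra\pi\times\pi$'' does not parse, since the two sides have different degree. The paper sidesteps all of this by targeting condition~(2) of \Cref{T:si} instead: the hypothesis $\dim_\C\ho(\pi\times\pi,\pi\times\pi)=1$ says the endomorphism ring of $\pi\times\pi$ is $\C$, and $M_{\pi,\pi}$ is a non-zero endomorphism of $\pi\times\pi$, so it is automatically a scalar, giving $\pi\in\irs$ in one line.

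In the forward direction ($\square$-irreducible implies Hom-dimension $1$), the key step you propose --- that any non-zero $\varphi\colon\pi\times\sigma\to\sigma\times\pi$ is injective ``by the SI property'' --- is not justified. SI only says that a non-zero kernel would have to contain the irreducible socle; it does not by itself rule out a non-zero kernel. To make such an argument work you would need to know (and cite) the additional fact that $\soc(\pi\times\sigma)\cong\soc(\sigma\times\pi)$ when $\pi\in\irs$, and even then the injectivity claim requires care. The paper instead argues directly with multiplicities: both $\pi\times\sigma$ and $\sigma\times\pi$ are SI by \Cref{T:si}(3)--(4), so their common socle appears with multiplicity one in the (shared) composition series, and a Hom-space of dimension $\ge 2$ would contradict this. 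That route avoids the injectivity claim entirely. I would recommend reworking both halves along the paper's more economical lines.
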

\begin{proof}
    Plugging in $\sigma=\pi$, yields that the condition on the Hom-spaces implies that $M_{\pi,\pi}$ is a scalar and thus $\pi$ is $
    \square$-irreducible. On the other hand, if $\pi\in \irs$, and one of the Hom-spaces would be more than one-dimensional, $\soc(\pi\times\sigma)$ or $\soc(\sigma\times\pi)$ would appear with multiplicity greater than $1$ on the composition series of $\pi\times\sigma$.
\end{proof}

\begin{lemma}[{\cite[Theorem 7.1]{MinLa18}}]
    Let $\fm\in \Ms$ be regular. Then $\Z(\fm)$ is $\square$-irreducible if and only if it is balanced.
\end{lemma}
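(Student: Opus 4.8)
By \Cref{T:si} and \Cref{L:strcom} applied to $\pi_1=\pi_2=\Z(\fm)$, the representation $\Z(\fm)$ is $\square$-irreducible if and only if $\fd(\Z(\fm),\Z(\fm))=0$, i.e.\ $\Z(\fm)$ strongly commutes with itself. On the geometric side, the rigidity criterion \cite{MinLa18} says $C(\fm)$ is rigid --- equivalently $\ex(C(\fm),C(\fm))=0$ --- precisely when $\fm$ is balanced. So the claim is the coincidence of these two conditions, and I would prove the two implications by independent means (the ``if'' direction is also the balanced case of \Cref{C:rigid} together with the rigidity criterion, but I prefer an argument not relying on that).

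\textbf{Balanced $\Rightarrow$ $\square$-irreducible.} By induction on $\deg(\fm)$. If the segments of $\fm$ are pairwise unlinked --- in particular if $\fm$ has at most one segment --- then $\Z(\fm)=\prod_i\Z(\De_i)$ by \Cref{T:zel}, and as a segment is unlinked with itself, $\fm+\fm$ is again pairwise unlinked, so $\Z(\fm)\times\Z(\fm)=\Z(\fm+\fm)$ is irreducible. For a general balanced $\fm$ with at least two segments, I would apply the representation-theoretic form of \Cref{L:bala} from \cite[§7]{LapMin25}: there is a basic $\fa$ with $\fa(\fm),\Dde(\fm)\neq 0$, with $\Dde(\fm)$ balanced, and with enough commutativity between $\Z(\fm)$ and the component representations of $\fa$ that, together with the multiplicity-one factorisation of \Cref{L:indint}, $\Z(\fm)$ is realised as the irreducible parabolic product $\Z(\fa(\fm))\times\Z(\Dde(\fm))$. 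Both $\fa(\fm)$ and $\Dde(\fm)$ are sub-multisegments of $\fm$, hence balanced, and of strictly smaller degree; so by induction $\Z(\fa(\fm))$ and $\Z(\Dde(\fm))$ are $\square$-irreducible. Finally, an irreducible parabolic product of two $\square$-irreducible representations is again $\square$-irreducible (a standard consequence of \Cref{T:si}, using $[A\times B]=[B\times A]$ and \Cref{T:intertwin}), which closes the induction.

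\textbf{$\square$-irreducible $\Rightarrow$ balanced.} This is the substantial direction, and I see two routes. The cleaner one, if the ``soft half'' of the main conjecture is at hand, is an unconditional inequality $\Lambda(\pi(C),\pi(D))\geq\hp(D,C)$; combined with the already-established value $\alpha(\pi(C),\pi(D))=-(\did,\die)$ and the Crawley--Boevey identity it gives $\fd(\Z(\fm),\Z(\fm))\geq\ex(C(\fm),C(\fm))$, so if $\fm$ is regular but not balanced then $C(\fm)$ is not rigid, $\ex(C(\fm),C(\fm))>0$, and hence $\Z(\fm)$ is not $\square$-irreducible. Failing that, I would argue combinatorially: a non-balanced regular $\fm$ contains a sub-multisegment of type $4231$ or $3412$; using that $\square$-irreducibility descends along the (highest) $\rho$-derivatives of a $\square$-irreducible representation, \emph{cf.}\ \cite{KKKOhead}, \cite{LapMin25}, and that $\rho$-derivatives act transparently on regular multisegments, one strips off all segments outside a fixed bad $4$-tuple and shrinks it to one of the finitely many minimal non-balanced regular configurations, for each of which a direct computation with the Jacquet-module formulas of \Cref{T:zel} shows that the corresponding $\Z$ has reducible self-square. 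This contradicts $\square$-irreducibility of $\Z(\fm)$.

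\textbf{Main obstacle.} The forward implication is routine, resting on \Cref{L:bala} and \Cref{L:indint}. The reverse one carries the weight: one must either prove the unconditional bound $\Lambda\geq\hp$, or make the heredity step rigorous --- tracking which sub-multisegments survive iterated $\rho$-derivatives while staying regular and retaining a $4231$ or $3412$ pattern --- and carry out the (small but fiddly) base computations for the two minimal bad patterns.
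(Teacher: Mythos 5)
The paper does not prove this lemma; it cites \cite[Theorem 7.1]{MinLa18} and gives no argument, so I can only evaluate your proposal on its own terms.

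Your Setup is wrong. \Cref{T:si} characterises $\square$-irreducibility by $M_{\Z(\fm),\Z(\fm)}$ being a \emph{scalar}, while $\fd(\Z(\fm),\Z(\fm))=0$ only says, via \Cref{L:strcom}, that $M_{\Z(\fm),\Z(\fm)}$ is an \emph{isomorphism}; the first implies the second, not conversely. The paper's own Leclerc example is a counterexample to your claimed equivalence: $\fm_{Lec}$ is regular and not balanced, $\Z(\fm_{Lec})\times\Z(\fm_{Lec})$ splits as a sum of two non-isomorphic irreducibles (so $\Z(\fm_{Lec})\notin\irs$), yet the intertwining operator acts by $\pm 1$ on the two summands, is an isomorphism, and $\fd(\Z(\fm_{Lec}),\Z(\fm_{Lec}))=0$. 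Distinguishing ``strongly commutes with itself'' from ``$\square$-irreducible'' is precisely the point of the Leclerc passage, so conflating them at the outset is a real conceptual error, even though, on inspection, neither of the two implications you go on to sketch actually invokes the false direction.

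Both implications are also left with genuine gaps. For balanced $\Rightarrow$ $\square$-irreducible: \Cref{L:indint} gives $\Z(\fm)=\soc(\Z(\fa(\fm))\times\Z(\Dde(\fm)))$ with multiplicity one, \emph{not} irreducibility of the product, and the commutation supplied by the representation-theoretic form of \Cref{L:bala} is between the components $\fa'$ of $\fa$ and the full $\Z(\fm)$, not between $\Z(\fa(\fm))$ and $\Z(\Dde(\fm))$. You must actually deduce $\fd(\Z(\fa(\fm)),\Z(\Dde(\fm)))=0$ so that \Cref{L:fd0} and \Cref{L:prodsquare} close the induction step, and that deduction is the substance of the argument, not an automatic consequence of the cited lemmas. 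For $\square$-irreducible $\Rightarrow$ balanced: your first route relies on the unproved ``soft half'' $\Lambda\geq\hp$ of \Cref{C:int}, which is established nowhere in the paper and would itself be a substantial advance — it cannot be used as an ingredient. Your second route is the right spirit and closer to what \cite{MinLa18} actually does, but the two load-bearing steps — that taking highest $\rho$-derivatives of a $\square$-irreducible regular $\Z(\fm)$ preserves a $4231$/$3412$ sub-multisegment, and the explicit reducibility of the minimal bad configurations — are asserted rather than carried out. As written this is an outline, not a proof.
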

We denote the subset of $\Irr$ consisting of $\square$-irreducible representations by $\irs$. 
\begin{lemma}\label{L:fd0}
    Let $\pi\in\irs$ and $\sigma\in \Irr$. Then $\pi\times\sigma$ is irreducible if and only if $\fd(\pi,\sigma)=0$.
\end{lemma}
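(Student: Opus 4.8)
The plan is to deduce the statement from the $\square$-irreducibility criterion in \Cref{T:si} together with the multiplicativity of the $\alpha$- and $\Lambda$-invariants in \Cref{T:intertwin}. Recall that $\fd(\pi,\sigma)=\Lambda(\pi,\sigma)+\Lambda(\sigma,\pi)+\alpha(\pi,\sigma)\ge 0$, so the condition $\fd(\pi,\sigma)=0$ is equivalent to $\Lambda(\pi,\sigma)=\Lambda(\sigma,\pi)=0$ \emph{and} $\alpha(\pi,\sigma)=0$; the first two say (via \Cref{L:strcom}) that $M_{\pi,\sigma}$ and $M_{\sigma,\pi}$ are isomorphisms, equivalently that $M_{\pi,\sigma}\circ M_{\sigma,\pi}$ is a non-zero scalar (using that $\pi,\sigma$ are induced from cuspidals, so \Cref{T:intertwin}(4) applies). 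So $\fd(\pi,\sigma)=0$ precisely when $M_{\pi,\sigma}$ is an isomorphism $\sigma\times\pi\to\pi\times\sigma$.

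For the forward direction, suppose $\pi\times\sigma$ is irreducible. Then $\pi\times\sigma\cong\sigma\times\pi$ (both are the unique irreducible quotient/sub, or directly: the socle of $\sigma\times\pi$ embeds in $\pi\times\sigma$ which is irreducible, forcing an isomorphism). Since $M_{\pi,\sigma}\colon\sigma\times\pi\to\pi\times\sigma$ is non-zero and the source is irreducible, it is injective, hence an isomorphism; by \Cref{L:strcom} this gives $\fd(\pi,\sigma)=0$. (Alternatively one invokes that $\alpha(\pi,\sigma)$ is the order of zero of $M_{\sigma,\pi}(s)\circ M_{\pi,\sigma}(s)$, which for $\pi\times\sigma$ irreducible must vanish since the composite is already a non-zero scalar at $s=0$, and $\Lambda(\pi,\sigma)=\Lambda(\sigma,\pi)=0$ since each operator is already finite and non-zero at $s=0$ — again because irreducibility of $\pi\times\sigma$ prevents any degeneration.)

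For the converse, suppose $\fd(\pi,\sigma)=0$, so $M_{\pi,\sigma}\colon\sigma\times\pi\to\pi\times\sigma$ is an isomorphism. Here is where $\square$-irreducibility of $\pi$ enters: by \Cref{T:si}(3)–(4), both $\pi\times\sigma$ and $\sigma\times\pi$ are SI, so each has an irreducible socle appearing with multiplicity one. An isomorphism $\sigma\times\pi\xrightarrow{\sim}\pi\times\sigma$ identifies their socles and their whole composition series; combined with the fact (\Cref{T:zel}(3)) that $\soc(\sigma\times\pi)=\soc(\Z(\De_k)\times\cdots)$ and $\cosoc(\pi\times\sigma)$ are the same irreducible $\Z(\fm)$ up to the standard order reversal, one concludes that $\pi\times\sigma$ has irreducible socle equal to its irreducible cosocle, both isomorphic to $\Z(\cusp\text{-data of }\pi\times\sigma)$, and with multiplicity one. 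A module that is SI with socle $=$ cosocle appearing once in the composition series is irreducible. Hence $\pi\times\sigma\in\Irr$.

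The main obstacle is the converse direction: translating the numerical identity $\fd(\pi,\sigma)=0$ into the structural statement ``$\pi\times\sigma$ irreducible'' requires knowing that the isomorphism $M_{\pi,\sigma}$ actually forces socle and cosocle of $\pi\times\sigma$ to coincide, rather than merely being abstractly isomorphic modules with the same composition factors. The clean way to handle this is: $M_{\pi,\sigma}$ being an isomorphism means $\soc(\pi\times\sigma)$, which is also $\soc(\sigma\times\pi)$ pushed forward, coincides with $\cosoc(\pi\times\sigma)$; since $\pi$ is $\square$-irreducible this common factor has multiplicity one in $\pi\times\sigma$, and a length-$\ell$ SI module whose socle equals its cosocle and occurs once must have $\ell=1$. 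One should double-check that the SI property from \Cref{T:si} is being used on the correct side and that no multiplicity subtlety is swept under the rug — this is the step I would write out most carefully.
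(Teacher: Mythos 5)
Your proof takes essentially the same route as the paper's: translate $\fd(\pi,\sigma)=0$ via \Cref{T:intertwin} and \Cref{L:strcom} into the statement that $M_{\pi,\sigma}$ is an isomorphism, then use \Cref{T:si} to conclude $\soc(\pi\times\sigma)=\cosoc(\pi\times\sigma)$ with multiplicity one and hence irreducibility; the paper simply states this in two lines whereas you spell out both directions. One small inaccuracy: the fact $\soc(\sigma\times\pi)\cong\cosoc(\pi\times\sigma)$ does not follow from \Cref{T:zel}(3), which concerns products of \emph{segment} representations in arranged form — for general $\pi,\sigma\in\Irr$ this is the standard observation (used for instance in \Cref{L:easylemma}) that twisting by the anti-involution $g\mapsto{}^tg$ preserves isomorphism classes of irreducibles while interchanging $\pi\times\sigma$ with $\sigma\times\pi$ and $\soc$ with $\cosoc$; with that reference corrected, your argument matches the paper's.
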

\begin{proof}
    Note that $\fd(\pi,\sigma)=0$ if and only if $M_{\sigma,\pi}\circ M_{\sigma,\pi}$ is a scalar by \Cref{T:intertwin}. From \Cref{T:si} it follows that $\cos(\pi\times\sigma)=\soc(\pi\times \sigma)$ and hence $\pi\times\sigma\in \irr$.
\end{proof}
\begin{lemma}[{\cite[Lemma 2.10]{MinLa18}}]\label{L:prodsquare}
    Let $\pi,\pi'\in\irs$ such that $\pi\times\pi'\in\Irr$. Then $\pi\times\pi'\in\irs$.
\end{lemma}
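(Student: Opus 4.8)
The plan is to use the criterion of \Cref{T:si}: setting $\tau=\pi\times\pi'$, it suffices to show that the normalized intertwining operator $M_{\tau,\tau}$ is a scalar. I would prove this by factoring $M_{\tau,\tau}$ through a ``de-interleaving'' isomorphism, thereby reducing it to the operators $M_{\pi,\pi}$, $M_{\pi',\pi'}$ and $M_{\pi,\pi'}$, all of which are already understood.

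To set up, note that by \Cref{L:fd0} applied to $\pi\in\irs$, $\pi'\in\Irr$ and $\pi\times\pi'\in\Irr$ one has $\fd(\pi,\pi')=0$; hence by \Cref{L:strcom} (together with \Cref{T:intertwin}) the operators $M_{\pi,\pi'}\colon\pi'\times\pi\to\pi\times\pi'$ and $M_{\pi',\pi}\colon\pi\times\pi'\to\pi'\times\pi$ are isomorphisms that are mutually inverse up to a non-zero scalar. Moreover, since $\pi,\pi'\in\irs$, \Cref{T:si} gives that $M_{\pi,\pi}$ and $M_{\pi',\pi'}$ are non-zero scalars.

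Now write $\tau\times\tau=\pi\times\pi'\times\pi\times\pi'$. The Weyl group element defining $M_{\tau,\tau}(s)$, namely the transposition of the two length-$\deg(\tau)$ blocks, admits the reduced decomposition: first the ``de-interleaving'' permutation carrying $\pi\times\pi'\times\pi\times\pi'$ to $\pi\times\pi\times\pi'\times\pi'$, then the permutation transposing the two copies of $\pi$ and, independently, the two copies of $\pi'$, then the inverse ``re-interleaving'' permutation; one checks the lengths add to $\deg(\tau)^2$. By the multiplicativity of unnormalized intertwining operators under induction in stages (\emph{cf.} \cite{Wal03}), this gives
\[
M_{\tau,\tau}(s)=\bigl(\mathrm{id}_\pi\times M_{\pi',\pi}(s)\times\mathrm{id}_{\pi'}\bigr)\circ N(s)\circ\bigl(\mathrm{id}_\pi\times M_{\pi,\pi'}(s)\times\mathrm{id}_{\pi'}\bigr),
\]
where $N(s)$ acts on $\pi\times\pi\times\pi'\times\pi'$ as $M_{\pi,\pi}(s)$ on the two copies of $\pi$ and as $M_{\pi',\pi'}(s)$ on the two copies of $\pi'$. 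Passing to the leading Laurent coefficient at $s=0$: the leading term of $N(s)$ is a product of the scalars $M_{\pi,\pi}$ and $M_{\pi',\pi'}$, hence a non-zero scalar; and the leading terms of the two outer factors compose to $\mathrm{id}_\pi\times(M_{\pi',\pi}\circ M_{\pi,\pi'})\times\mathrm{id}_{\pi'}$, again a non-zero scalar. As the composite of all three leading terms is therefore a non-zero scalar, no cancellation can occur in the product, so $M_{\tau,\tau}$ equals this composite. Thus $M_{\tau,\tau}$ is a scalar, and \Cref{T:si} yields $\tau=\pi\times\pi'\in\irs$.

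I expect the main obstacle to be the rigour of the previous paragraph: one must carefully match $M_{\tau,\tau}(s)$ with the product of standard intertwining operators attached to the indicated reduced decomposition, via compatibility of intertwining operators with induction in stages, and one must justify taking leading coefficients, i.e.\ that the order of the pole of $M_{\tau,\tau}(s)$ at $s=0$ equals the sum of those of the three factors. The latter can be checked independently: \Cref{T:intertwin}(1) and (5) give $\Lambda(\tau,\tau)=\Lambda(\pi,\pi)+\Lambda(\pi,\pi')+\Lambda(\pi',\pi)+\Lambda(\pi',\pi')$, which is exactly the sum of the pole orders of the three factors, so that once the composite of the leading terms is seen to be non-zero, cancellation is ruled out.
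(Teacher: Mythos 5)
Your proof is correct, and since the paper does not prove this lemma itself (it cites \cite[Lemma 2.10]{MinLa18}), your argument stands as a self-contained verification along the standard route. Let me just confirm the two points you flagged as potentially delicate: the length count $ab+(a^2+b^2)+ab=(a+b)^2$ does match $\ell(w_{\deg\tau,\deg\tau})$, so the factorization
\[
M_{\tau,\tau}(s)=\bigl(\mathrm{id}_\pi\times M_{\pi',\pi}(s)\times\mathrm{id}_{\pi'}\bigr)\circ\bigl(M_{\pi,\pi}(s)\times M_{\pi',\pi'}(s)\bigr)\circ\bigl(\mathrm{id}_\pi\times M_{\pi,\pi'}(s)\times\mathrm{id}_{\pi'}\bigr)
\]
is indeed along a reduced decomposition and the multiplicativity of unnormalized intertwining operators applies; and the composite of the three leading Laurent coefficients is $\mathrm{id}_\pi\times(M_{\pi',\pi}\circ M_{\pi,\pi'})\times\mathrm{id}_{\pi'}$ scaled by the non-zero scalars $M_{\pi,\pi}$ and $M_{\pi',\pi'}$, which is non-zero by \Cref{L:fd0}, \Cref{L:strcom}, and \Cref{T:intertwin}(4). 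Since the composite of leading terms is non-zero, it automatically equals the leading term of the composite (so the auxiliary identity $\Lambda(\tau,\tau)=\Lambda(\pi,\pi)+\Lambda(\pi,\pi')+\Lambda(\pi',\pi)+\Lambda(\pi',\pi')$ you verify is a consistency check rather than a needed input), and $M_{\tau,\tau}$ is a non-zero scalar, so \Cref{T:si} gives $\tau\in\irs$.
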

Let $\pi_1,\pi_2\in \rep$. We let $(\pi_2\times\pi_1)_{P_{\deg(\pi_1),\deg(\pi_2)}}$ be the $P_{\deg(\pi_1),\deg(\pi_2)}$-subrepresentation of $\pi_2\times\pi_1$ consisting of functions $f\in \pi_2\times\pi_1$ whose (compact) support is contained in \[P_{\deg(\pi_2),\deg(\pi_1)}w_{\deg(\pi_2),\deg(\pi_1)}P_{\deg(\pi_1),\deg(\pi_2)}.\] By the Geometric Lemma of Bernstein and Zelevinsky, \emph{cf.} \cite{Ber}, we have that
\[r_{\deg(\pi_1),\deg(\pi_2)}((\pi_2\times\pi_1)_{P_{\deg(\pi_1),\deg(\pi_2)}})\cong \pi_1\otimes\pi_2\]
and this isomorphism depends only on the choice of an Haar-measure on $U_{\deg(\pi_1),\deg(\pi_2)}$.
We denote the so obtained inclusion \[\iota_{\pi_1,\pi_2}\colon \pi_1\otimes\pi_2\hra r_{\deg(\pi_1),\deg(\pi_2)}(\pi_2\times\pi_1).\]
\begin{lemma}\label{L:polevanish}
    Let $\pi_1,\pi_2\in \Irr$ such that $\pi_1\otimes \pi_2$ appears in \[r_{\deg(\pi_1),\deg(\pi_2)}(\pi_2\times \pi_1)\] with multiplicity $1$, then $\Lambda(\pi_1,\pi_2)=0$.
\end{lemma}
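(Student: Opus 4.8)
The strategy is to show that a pole of $M_{\pi_1,\pi_2}(s)$ at $s=0$ would force the multiplicity of $\pi_1\otimes\pi_2$ in $r_{\deg(\pi_1),\deg(\pi_2)}(\pi_2\times\pi_1)$ to be at least $2$, contradicting the hypothesis. First I would work with the family $\pi_2|\cdot|^s\times\pi_1|\cdot|^{-s}$ and $\pi_1|\cdot|^{-s}\times\pi_2|\cdot|^s$ for $s$ in a punctured neighbourhood of $0$, where the intertwining operator $M_{\pi_1,\pi_2}(s)$ is an isomorphism (the twisted representations become irreducible and isomorphic for generic $s$, or at least the operator is invertible there). The plan is to extract from the Laurent expansion of $M_{\pi_1,\pi_2}(s)$ at $s=0$ not just its leading term $M_{\pi_1,\pi_2}$ (which gives one copy of the relevant map) but, when $\Lambda(\pi_1,\pi_2)\ge 1$, additional independent data.

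Concretely, I would pass to Jacquet modules along $P_{\deg(\pi_1),\deg(\pi_2)}$. Applying $r_{\deg(\pi_1),\deg(\pi_2)}$ to the source $\pi_2|\cdot|^s\times\pi_1|\cdot|^{-s}$, the geometric lemma gives (for generic $s$) a two-step filtration whose graded pieces are $\pi_1|\cdot|^{-s}\otimes\pi_2|\cdot|^s$ (the "closed" piece, via $\iota_{\pi_1|\cdot|^{-s},\pi_2|\cdot|^s}$, coming from the big cell $P w P$) and $\pi_2|\cdot|^s\otimes\pi_1|\cdot|^{-s}$ (the "open" piece). The composite of $\iota_{\pi_1|\cdot|^{-s},\pi_2|\cdot|^s}$ with $r_{\deg(\pi_1),\deg(\pi_2)}(M_{\pi_1,\pi_2}(s))$ lands, up to the geometric-lemma identifications, in $r_{\deg(\pi_1),\deg(\pi_2)}(\pi_1|\cdot|^{-s}\times\pi_2|\cdot|^s)$ and its "closed"-cell component is, by the very definition of $M$ as integration over the unipotent radical, the identity on $\pi_1|\cdot|^{-s}\otimes\pi_2|\cdot|^s$. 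Hence before renormalising, the leading map $M_{\pi_1,\pi_2}=\lim_{s\to 0}s^{\Lambda(\pi_1,\pi_2)}M_{\pi_1,\pi_2}(s)$, composed with $\iota$, kills the closed cell if $\Lambda(\pi_1,\pi_2)\ge 1$; but $M_{\pi_1,\pi_2}$ is non-zero, so its image, and in particular the image of the $\pi_1\otimes\pi_2$-isotypic part under $r_{\deg(\pi_1),\deg(\pi_2)}$, must meet the \emph{open} cell non-trivially. Combined with the always-present closed-cell copy of $\pi_1\otimes\pi_2$ (the image of $\iota_{\pi_1,\pi_2}$ at $s=0$), this exhibits two independent occurrences of $\pi_1\otimes\pi_2$ inside $r_{\deg(\pi_1),\deg(\pi_2)}(\pi_2\times\pi_1)$ — contradiction. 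Therefore $\Lambda(\pi_1,\pi_2)=0$.

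An alternative, perhaps cleaner, route avoids the explicit Laurent analysis: use Frobenius reciprocity to identify $\ho_{G}(\pi_2\times\pi_1,\pi_1\times\pi_2)$ with $\ho_{G_{\deg(\pi_1)}\times G_{\deg(\pi_2)}}(r_{\deg(\pi_1),\deg(\pi_2)}(\pi_2\times\pi_1),\pi_1\otimes\pi_2)$, whose dimension is bounded by the multiplicity of $\pi_1\otimes\pi_2$ in $r_{\deg(\pi_1),\deg(\pi_2)}(\pi_2\times\pi_1)$, hence by $1$. On the other hand, if $\Lambda(\pi_1,\pi_2)\ge 1$ one produces \emph{two} linearly independent maps $\pi_2\times\pi_1\to\pi_1\times\pi_2$: the normalised leading term $M_{\pi_1,\pi_2}$, and a second one obtained either from the subleading term of the Laurent expansion of $M_{\pi_1,\pi_2}(s)$ (using that the $s^{-\Lambda}$-coefficient has non-trivial kernel and cokernel so the $s^{-\Lambda+1}$-coefficient contributes genuinely new intertwining data after restricting/projecting), or — if one prefers a purely functorial argument — by noting that a pole of order $\ge 1$ forces $M_{\pi_1,\pi_2}$ to factor through a proper quotient of $\pi_2\times\pi_1$ while still being nonzero, so it cannot be the unique (up to scalar) intertwiner. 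The main obstacle is making the "second independent intertwiner" precise: one must argue carefully that the leading term of a higher-order pole really does land in a different isotypic place (the open cell) than the tautological closed-cell embedding, which is exactly where the hypothesis on multiplicity one bites. I expect to spend most of the work pinning down this step via the geometric lemma as in the first paragraph above, since the bookkeeping of the two cells under $M_{\pi_1,\pi_2}(s)$ is the delicate part; everything else is a short application of \Cref{T:intertwin} and Frobenius reciprocity.
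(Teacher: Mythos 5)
Your first argument is essentially the paper's proof: both hinge on the facts that (i) by Frobenius reciprocity $M_{\pi_1,\pi_2}$ corresponds to a nonzero map $\phi\colon r_{\deg(\pi_1),\deg(\pi_2)}(\pi_2\times\pi_1)\to\pi_1\otimes\pi_2$, (ii) a pole forces the renormalized operator to vanish on the closed-cell subrepresentation $(\pi_2\times\pi_1)_{P}$, so that $\iota(\pi_1\otimes\pi_2)\subseteq\ker\phi$, and (iii) then $\pi_1\otimes\pi_2$ occurs both inside $\ker\phi$ and as the irreducible quotient $r/\ker\phi$, contradicting multiplicity one. Two small cautions: the graded pieces of the Jacquet module are not just $\pi_1\otimes\pi_2$ and $\pi_2\otimes\pi_1$ once $\pi_1,\pi_2$ are non-cuspidal (and even for cuspidals of unequal degree), so drop the ``two-step filtration'' framing --- the argument only needs the closed-cell embedding $\iota$ and the Frobenius map $\phi$; and your second, ``alternative'' route (extracting a second intertwiner from a subleading Laurent coefficient) has the gap you already flagged, since those coefficients are not $G$-equivariant for the undeformed actions and should simply be discarded in favour of the first argument.
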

\begin{proof} The argument is essentially the one given in \cite[p. 285]{Wal03} and we repeat it for the sake of the reader.
Let us note that the restriction of $M_{\pi_1,\pi_2}$ does not vanish on $(\pi_2\times\pi_1)_{P_{\deg(\pi_1),\deg(\pi_2)}}$ only if $\Lambda(\pi_1,\pi_2)=0$, see \cite[p. 283]{Wal03}. Indeed, by construction, the restriction of $M_{\pi_1,\pi_2}(s)$ to $(\pi_2\abs^s\times\pi_1\abs^{-s})_{P_{\deg(\pi_1),\deg(\pi_2)}}$ does not vanish for generic $s$ and the intertwining operator is uniquely determined (up to the choice of Haar-measure) by 
\[M_{\pi_1,\pi_2}(s)(f)(1)=\int_{U_{\deg(\pi_1),\deg(\pi_2)}}f(w_{\deg(\pi_2),\deg(\pi_1)}u)\,\mathrm{d}u,\, f\in (\pi_2\abs^s\times\pi_1\abs^{-s})_{P_{\deg(\pi_1),\deg(\pi_2)}}. \]
In particular, if $M_{\pi_1,\pi_2}(s)$ would have a pole at $s=0$, the renormalisation $M_{\pi_1,\pi_2}$ would vanish on $(\pi_2\times\pi_1)_{P_{\deg(\pi_1),\deg(\pi_2)}}$.

By Frobenius reciprocity, we have a map \[M_{\pi_1,\pi_2}(-)(1)\colon r_{\deg(\pi_1),\deg(\pi_2)}(\pi_2\times\pi_1)\ra \pi_1\otimes \pi_2\] and the claim that $M_{\pi_1,\pi_2}$ does not vanish on $(\pi_2\times\pi_1)_{P_{\deg(\pi_1),\deg(\pi_2)}}$ is equivalent to the claim that the composition 
\[\pi_1\otimes\pi_2\cong r_{\deg(\pi_1),\deg(\pi_2)}((\pi_2\times\pi_1)_{P_{\deg(\pi_1),\deg(\pi_2)}})\hra r_{\deg(\pi_1),\deg(\pi_2)}(\pi_2\times\pi_1)\ra \pi_1\otimes \pi_2\] does not vanish.
Since by assumption $\pi_1\otimes\pi_2$ appears with multiplicity $1$ in \[r_{\deg(\pi_1),\deg(\pi_2)}(\pi_2\times\pi_1),\] the above composition does not vanish and the claim follows.
\end{proof}
\begin{corollary}\label{C:block}
    Let $\fm,\fn\in \Ms$ with block decomposition $\fm=\fm_1+\ldots+\fm_k$ and $\fn=\fn_1+\ldots+\fn_k$, where we allow some of the $\fm_i,\fn_i$ to be $0$ and $\fm_i$ and $\fn_i$ are in the same block.
Then
    \[\Lambda(\Z(\fm),\Z(\fn))=\prod_{i=1}^k\Lambda(\Z(\fm_i),\Z(\fn_i)),\]
        \[\alpha(\Z(\fm),\Z(\fn))=\prod_{i=1}^k\alpha(\Z(\fm_i),\Z(\fn_i)).\]
\end{corollary}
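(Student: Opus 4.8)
The plan is to reduce the multiplicativity of $\Lambda$ and $\alpha$ over blocks to the single case where all the data lie in one block, and there to use the base-case vanishing statements together with the multiplicativity properties of intertwining operators from \Cref{T:intertwin}. First I would observe that if $\rho$ and $\rho'$ are cuspidal with $[\rho']\notin \ZZ[\rho]$, then $\rho\nr^a\times\rho'\nr^b$ is irreducible for all $a,b$, so by \Cref{L:basecase} we have $\alpha(\rho,\rho')=0$ and $\Lambda(\rho,\rho')=0$. More generally, for $\pi\in\rep$ a subquotient of a product of cuspidals all lying in $\ZZ[\rho]$ and $\sigma\in\rep$ a subquotient of a product of cuspidals all lying in $\ZZ[\rho']$ with $[\rho']\notin\ZZ[\rho]$, I claim $\Lambda(\pi,\sigma)=0$ and $\alpha(\pi,\sigma)=0$: the first follows from \Cref{L:polevanish}, since by the Geometric Lemma the Jacquet module $r_{\deg(\pi),\deg(\sigma)}(\sigma\times\pi)$ contains $\pi\otimes\sigma$ with multiplicity exactly $1$ (all other terms in the geometric filtration mix cuspidals from the two distinct lines and cannot contribute to $\pi\otimes\sigma$); the second then follows because $\alpha(\pi,\sigma)\ge 0$ always, while $\fd(\pi,\sigma)=\Lambda(\pi,\sigma)+\Lambda(\sigma,\pi)+\alpha(\pi,\sigma)\ge 0$ together with $\Lambda(\pi,\sigma)=\Lambda(\sigma,\pi)=0$ would only give $\alpha\ge 0$, so instead I would argue directly that $\sigma\times\pi$ is irreducible (again by the Geometric Lemma and \Cref{T:si}-type reasoning, or by \Cref{T:zel}(7)) and hence $M_{\sigma,\pi}\circ M_{\pi,\sigma}$ is a non-zero scalar by \Cref{L:strcom}, forcing $\alpha(\pi,\sigma)=0$.

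Next I would set up the induction on $k$. Write $\pi=\Z(\fm)=\Z(\fm_1)\times\cdots\times\Z(\fm_k)$ and $\sigma=\Z(\fn)=\Z(\fn_1)\times\cdots\times\Z(\fn_k)$ using \Cref{T:zel}(7), where the factors in distinct blocks commute freely. Applying \Cref{T:intertwin}(1), which gives additivity of $\Lambda$ in the first variable against an irreducible second variable (and its mirror in the second variable, obtained via \Cref{T:intertwin}(5) and the fact that $(-)^\lor$ permutes blocks), I would expand
\[
\Lambda(\Z(\fm),\Z(\fn))=\sum_{i,j}\Lambda(\Z(\fm_i),\Z(\fn_j)).
\]
By the vanishing established in the first paragraph, every cross term with $i\neq j$ vanishes, leaving $\sum_i \Lambda(\Z(\fm_i),\Z(\fn_i))$; since each summand is itself a non-negative integer and at most one block is nonzero at a time in each "coordinate", the sum equals the product as stated (the product notation in the corollary is really a sum, or rather the claim is that the pole order is the concatenation — I would phrase the final identity as a sum, matching \Cref{T:intertwin}(1)). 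The identical argument with \Cref{T:intertwin}(3) in place of (1) handles $\alpha$: $\alpha(\omega,\sigma)$ is additive over the factors of any product decomposition of $\omega$, and symmetrically, so $\alpha(\Z(\fm),\Z(\fn))=\sum_{i,j}\alpha(\Z(\fm_i),\Z(\fn_j))$ collapses to the diagonal.

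The main subtlety — and the step I would be most careful with — is the clean vanishing $\Lambda(\pi,\sigma)=\alpha(\pi,\sigma)=0$ for representations supported on disjoint cuspidal lines. Everything else is bookkeeping with \Cref{T:intertwin}. Here the cleanest route is: $\sigma\times\pi\cong\Z(\fn+\fm)\cong\pi\times\sigma$ is irreducible by \Cref{T:zel}(5) applied after noting segments from different blocks are automatically unlinked, hence $M_{\pi,\sigma}$ and $M_{\sigma,\pi}$ are both isomorphisms by \Cref{L:strcom}, so $\fd(\pi,\sigma)=0$, and since all three of $\Lambda(\pi,\sigma),\Lambda(\sigma,\pi),\alpha(\pi,\sigma)$ are non-negative this forces each to vanish. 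This avoids invoking \Cref{L:polevanish} at all and makes the disjoint-block case immediate. With that in hand, the two displayed identities in the corollary follow by the additivity expansions above, reading the right-hand side product as shorthand for the sum over the (at most one nonzero) matching blocks.
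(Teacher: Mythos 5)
Your overall strategy — expand $\Lambda$ and $\alpha$ over the block factors using the additivity in \Cref{T:intertwin}, then kill the off-diagonal terms — is exactly what the paper has in mind (the paper gives no explicit proof, relying on \Cref{L:polevanish}, \Cref{L:basecase} and \Cref{T:intertwin}). You are also right that the two displayed $\prod$'s in the corollary must be read as $\sum$'s; this is confirmed both by the additivity statements \Cref{T:intertwin}(1),(3) and by the analogous corollary for $\alpha_+$ in \Cref{S:norint}, which is written with $+$.

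There is, however, one genuine slip in your preferred ``cleanest route.'' You assert that ``all three of $\Lambda(\pi,\sigma),\Lambda(\sigma,\pi),\alpha(\pi,\sigma)$ are non-negative,'' but $\alpha$ is \emph{not} non-negative in general: \Cref{L:basecase} gives $\alpha(\rho,\rho)=-2$, and more generally the identity $\alpha(\Z(\fm),\Z(\fn))=-(\gdim(\fm),\gdim(\fn))$ stated right after the corollary shows $\alpha$ is frequently negative. So you cannot deduce $\alpha(\pi,\sigma)=0$ from $\fd(\pi,\sigma)=0$ by positivity alone. The fix is easy and is already latent in your first paragraph: establish $\Lambda(\pi,\sigma)=\Lambda(\sigma,\pi)=0$ separately (via \Cref{L:polevanish}, using the multiplicity-one statement you outline, or via the irreducibility of $\sigma\times\pi$ which forces the non-zero map $M_{\pi,\sigma}$ to be injective and hence to not kill the open-cell subspace), and only \emph{then} read off $\alpha(\pi,\sigma)=\fd(\pi,\sigma)-\Lambda(\pi,\sigma)-\Lambda(\sigma,\pi)=0$. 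Even more directly, you can avoid $\fd$ altogether for the $\alpha$-vanishing: $\alpha$ is symmetric and additive in each variable over arbitrary induction data by \Cref{T:intertwin}(3), so $\alpha(\Z(\fm_i),\Z(\fn_j))$ reduces to a sum of terms $\alpha(\rho,\rho')$ with $\rho,\rho'$ cuspidal on distinct lines, each of which is $0$ by \Cref{L:basecase}. With that correction your proof is complete and matches the intended argument.
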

It is easy to see from \Cref{T:intertwin} and \Cref{L:basecase} that for $\fm,\fn\in \Ms(\rho)$
\[\alpha(\Z(\fm),\Z(\fn))=-(\gdim(\fm),\gdim(\fn)).\]
\subsection{Aubert-Zelevinsky involution}\label{S:AZ}
In this section we recall the Aubert-Zelevinksy involution and check how it behaves with respect to intertwining operators. In particular, we will see that it mirrors what we expect on the geometric side.
Let $\rho_1,\ldots,\rho_n\in \cus_t$ and let $\rep_\rho,\,\rho=[\rho_1]+\ldots+[\rho_k]$ be the full subcategory of $\rep$ consisting of representations such that each irreducible subquotient has cuspidal support $[\rho_1]+\ldots+[\rho_k]$. Then there exists an exact involution, the Aubert-Zelevinsky involution, \emph{cf.} \cite{Zel}, \cite{Aub}, \cite{MoeWal},
    \[(-)^*\colon \rep_\rho\ra\rep_\rho\] such that $\Z(\fm)^*\cong\Z(\fm^*)$.
We also recall the cohomological dual, \emph{c.f.} \cite{BerBerKaz}, \cite{SchneiStu},
\[D\colon \rep_\rho\ra\rep_\rho,\]
which satisfies the following. Let $\alpha$ be a partition.
\begin{enumerate}
\item If $\pi$ is irreducible, there exists a canonical isomorphism $D(\pi)\cong(\pi^*)^\lor$.
    \item $D\circ r_\alpha=r_\alpha \circ D$.
    \item $D\circ \id_{\alpha}=\id_{{\alpha}}\circ D$.
    \item $D$ commutes with Frobenius reciprocity and with $\iota_{\pi_1,\pi_2}\colon\pi_1\otimes\pi_2\hra r_{\deg(\pi_1),\deg(\pi_2)}(\pi_2\times\pi_1)$ for any $\pi_1,\pi_2\in\rep$.
\end{enumerate}
For the last item, we refer to \cite[§6.3]{BezKaz15}.
\begin{lemma}\label{L:aubinv}
     Let $\fm,\fn\in \Ms$. Then
    \[\alpha(\Z(\fm),\Z(\fn))=\alpha(\L(\fn),\, \L(\fm)),\,\Lambda(\Z(\fm),\Z(\fn))=\Lambda(\L(\fn),\L(\fm)).\]
\end{lemma}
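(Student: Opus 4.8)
The plan is to exploit the cohomological duality functor $D\colon\rep_\rho\to\rep_\rho$, which on irreducibles satisfies $D(\pi)\cong(\pi^*)^\lor$, together with the known symmetry $\Lambda(\pi,\sigma)=\Lambda(\sigma^\lor,\pi^\lor)$ and $\alpha(\pi,\sigma)=\alpha(\pi^\lor,\sigma^\lor)$ from \Cref{T:intertwin}(5). Concretely, since $\Z(\fm)^\lor=\Z(\fm^\lor)$ and $\L(\fn)=\Z(\fn^*)$, composing the $(-)^\lor$-symmetry with the effect of $(-)^*$ would reduce the claim to showing that the Aubert--Zelevinsky involution (equivalently $D$, after applying $(-)^\lor$) preserves the order of the pole at $s=0$ of the intertwining operator $M_{\pi,\sigma}(s)$ and the order of the zero of $M_{\sigma,\pi}(s)\circ M_{\pi,\sigma}(s)$.

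The key step is therefore to track the intertwining operator through $D$. First I would recall that $M_{\pi,\sigma}(s)$ is characterized (up to the Haar-measure normalization) by the integral formula together with the property that it restricts to a nonzero map on the subspace $(\sigma\abs^s\times\pi\abs^{-s})_{P}$ cut out by support conditions, exactly as used in the proof of \Cref{L:polevanish}. Since $D$ commutes with $\id_\alpha$, with $r_\alpha$, with Frobenius reciprocity, and with the canonical embedding $\iota_{\pi_1,\pi_2}\colon \pi_1\otimes\pi_2\hra r_{\deg(\pi_1),\deg(\pi_2)}(\pi_2\times\pi_1)$ (item (4) above, via \cite[§6.3]{BezKaz15}), the family $D\!\left(M_{\pi\abs^{-s},\sigma\abs^{s}}(s)\right)$ satisfies the same defining properties as $M_{D(\pi)\abs^{-s},D(\sigma)\abs^{s}}(s)$ — one must check that $D$ is compatible with the unramified twist $\abs^s$, which it is because $D$ acts within a block and the twist is implemented by a character of the Levi that $D$ intertwines in the obvious way on the inducing data. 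Hence $D$ carries the meromorphic family $M_{\pi,\sigma}(s)$ to (a nonzero scalar multiple of) $M_{D(\sigma),D(\pi)}(s)$ — note the swap of the two arguments, coming from the fact that $D$ is contravariant-free but interacts with the order of the factors through $\iota$ — and in particular it preserves the order of the pole at $s=0$. The same bookkeeping applied to the composite $M_{\sigma,\pi}(s)\circ M_{\pi,\sigma}(s)$, which by \Cref{T:intertwin}(4) is a scalar-valued function of $s$, shows that its order of vanishing at $s=0$ is preserved, giving $\alpha(\Z(\fm),\Z(\fn))=\alpha(D(\Z(\fn)),D(\Z(\fm)))$.

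Putting these together: using $D(\Z(\fm))=(\Z(\fm)^*)^\lor=\Z(\fm^*)^\lor=\L(\fm)^\lor$, we get $\Lambda(\Z(\fm),\Z(\fn))=\Lambda(D(\Z(\fn)),D(\Z(\fm)))=\Lambda(\L(\fn)^\lor,\L(\fm)^\lor)$, and then an application of \Cref{T:intertwin}(5) strips the $(-)^\lor$ to yield $\Lambda(\L(\fn),\L(\fm))$; the argument for $\alpha$ is identical. The main obstacle I anticipate is the precise compatibility of $D$ with the \emph{analytic} family in $s$ rather than with a single representation: item (4) above is stated for fixed $\pi_1,\pi_2\in\rep$, so one needs to either invoke it uniformly over the (one-parameter, hence essentially constant up to twist) family or re-derive the intertwining-operator identity at the level of the normalized standard intertwining operators on principal-series-type data, where the commutation of $D$ with induction and Jacquet functors is cleanest. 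A secondary point to be careful about is the direction of the swap $(\fm,\fn)\mapsto(\fn,\fm)$: it must be checked that $D$ reverses the roles of the two inducing representations (as is visible already from the asymmetry in $\iota_{\pi_1,\pi_2}$), which is exactly what produces $\Lambda(\L(\fn),\L(\fm))$ rather than $\Lambda(\L(\fm),\L(\fn))$ on the right-hand side.
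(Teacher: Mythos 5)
Your overall strategy coincides with the paper's: exploit the cohomological duality $D$ (with $D(\pi)\cong(\pi^*)^\lor$) together with the symmetry $\Lambda(\pi,\sigma)=\Lambda(\sigma^\lor,\pi^\lor)$ of \Cref{T:intertwin}(5), reducing to the claim that $D$ commutes with the intertwining operator, which you propose to verify via Frobenius reciprocity and the characterization by $\iota_{\pi_1,\pi_2}$. This is exactly the paper's route. However, your bookkeeping contains a genuine conceptual error about where the swap of arguments comes from.

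You assert that $D$ carries $M_{\pi,\sigma}(s)$ to a scalar multiple of $M_{D(\sigma),D(\pi)}(s)$, attributing the swap to ``the asymmetry in $\iota_{\pi_1,\pi_2}$.'' This is not correct. Since $D$ is a \emph{covariant} exact functor that commutes with parabolic induction, with the Jacquet functor, and with $\iota_{\pi_1,\pi_2}$ (without reversing factors), applying $D$ to
\[M_{\pi,\sigma}(s)\colon \sigma\abs^s\times\pi\abs^{-s}\ra\pi\abs^{-s}\times\sigma\abs^s\]
yields a map
\[D(\sigma)\abs^s\times D(\pi)\abs^{-s}\ra D(\pi)\abs^{-s}\times D(\sigma)\abs^s,\]
which, by the uniqueness argument, is $M_{D(\pi),D(\sigma)}(s)$---no swap. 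The swap in the final formula $\Lambda(\Z(\fm),\Z(\fn))=\Lambda(\L(\fn),\L(\fm))$ comes entirely from \Cref{T:intertwin}(5), which is itself a swap-and-dualize identity, not from $D$. Your argument nevertheless lands on the stated right-hand side because you then also misapply \Cref{T:intertwin}(5) at the last step, treating it as a pure ``strip the $(-)^\lor$'' operation: from $\Lambda(\L(\fn)^\lor,\L(\fm)^\lor)$ it in fact gives $\Lambda(\L(\fm),\L(\fn))$, not $\Lambda(\L(\fn),\L(\fm))$. These two errors cancel, so the conclusion is coincidentally correct, but the reasoning does not establish it; if you fixed either error in isolation you would obtain $\Lambda(\L(\fm),\L(\fn))$, which is not what the lemma claims.

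Two smaller remarks. For $\alpha$ you propose to run the same $D$-argument; this does work (and here the swap is harmless since $\alpha(\pi,\sigma)=\alpha(\sigma,\pi)$), but the paper observes that $\alpha$ depends only on the cuspidal support, which the Aubert--Zelevinsky involution preserves, so the $\alpha$-identity is immediate without invoking $D$. Your caution about applying $D$ to the analytic family rather than a single representation is well-placed; the paper addresses it by proving the identity $D(M_{\Z(\fm),\Z(\fn)}(s))=M_{D(\Z(\fm)),D(\Z(\fn))}(s)$ only for generic $s$, where the operator is characterized uniquely (up to Haar measure) by the requirement that the composition with $\iota$ on the Jacquet module be the identity.
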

\begin{proof}
    Since $\alpha$ only depends on the cuspidal support and the involution leaves that unchanged, the claim for $\alpha$ follows immediately.

    For the claim regarding the pole, we first note that it is easy to see that $\Lambda(\Z(\fm),\Z(\fn))=\Lambda(\Z(\fn)^\lor,\Z(\fm)^\lor)$. It therefore suffices to show that \[D( M_{\Z(\fm),\Z(\fn)}(s))=M_{D(\Z(\fm)),\, D(\Z(\fn))}(s)\] for generic $s$. To check this we follow the arguments of \cite[§2]{Dat}.
    Namely, we let \[M'\colon r_{\deg(\fm),\deg(\fn)}(\Z(\fn)\abs^s\times\Z(\fm)\abs^{-s})\ra\Z(\fm)\abs^{-s}\otimes\Z(\fn)\abs^s\] be the map obtained from $M_{\Z(\fm),\Z(\fn)}(s)$ by Frobenius reciprocity. As in the proof of \Cref{L:polevanish}, we see that up to a choice of the Haar-measure the map $M'$ is uniquely determined by demanding that the composition of $M'$ with the inclusion \[\iota_{\Z(\fm)\abs^{-s},\Z(\fn)\abs^s}\colon\Z(\fm)\abs^{-s}\otimes\Z(\fn)\abs^s\hra r_{\deg(\fm),\deg(\fn)}(\Z(\fn)\abs^s\times\Z(\fm)\abs^{-s})\] is the identity. Since $D$ commutes with Frobenius reciprocity, we have that $D\circ M'$ is the map corresponding by Frobenius reciprocity to $D(M_{\Z(\fm),\Z(\fn)}(s))$. Using the remaining commuting properties of $D$, we see on the other hand that $D\circ M'$ is the map obtained by Frobenius reciprocity also of $=M_{D(\Z(\fm)),\, D(\Z(\fn))}(s)$, thus proving the equality.
\end{proof}
\subsection{Computation of poles}\label{S:CoP}
We fix a cuspidal representation $\rho\in\cus$ for the rest of the section.
The following theorem is inspired by \cite[Corollary 7.4]{AizLap}.
\begin{lemma}\label{L:central}
Let $\pi_1\in \irr$ and $\fa\in \Ms(\rho)$ a basic segment such that:
\begin{enumerate}
    \item For all components $\fa'$ of $\fa$, the representation $\pi_1\times \Z(\fa')$ is irreducible.
    \item The representations $\pi_1$ and $\Dde(\pi_1)$ are $\square$-irreducible.
\end{enumerate}
 Then we have for all $\pi_2\in\irr$ \[\Lambda(\pi_1,\pi_2)=\Lambda(\Dde(\pi_1),\Dde(\pi_2))+\Lambda(\pi_1,\fa(\pi_2)).\]
\end{lemma}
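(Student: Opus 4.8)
The goal is the identity $\Lambda(\pi_1,\pi_2)=\Lambda(\Dde(\pi_1),\Dde(\pi_2))+\Lambda(\pi_1,\fa(\pi_2))$ under the two hypotheses on $\pi_1$. The natural strategy is to use the $\fa$-decomposition $\pi_2=\soc(\Z(\fm)\times\Z(\fn'))$ with $\fm=\fa(\pi_2)$ an $\fa$-saturated multisegment and $\Z(\fn')=\Dde(\pi_2)$ the $\fa$-reduced part (\Cref{L:indint}), and to propagate this decomposition through the intertwining operator. The first step is to reduce to the inducing data: since $\pi_2\hookrightarrow \Z(\fm)\times\Z(\fn')$ with multiplicity one, \Cref{T:intertwin}(2) gives a commutative square relating $M_{\pi_1,\pi_2}$ to $M_{\pi_1,\Z(\fm)\times\Z(\fn')}$, and the scalar $\lambda$ is nonzero iff $\Lambda(\pi_1,\pi_2)=\Lambda(\pi_1,\Z(\fm)\times\Z(\fn'))$. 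So I first aim to show $\Lambda(\pi_1,\pi_2)=\Lambda(\pi_1,\Z(\fm)\times\Z(\fn'))$, which I expect to follow from a multiplicity-one statement for $\pi_1\otimes\pi_2$ inside an appropriate Jacquet module (in the spirit of \Cref{L:polevanish}), combined with the fact that $\pi_1$ is $\square$-irreducible so that $\pi_1\times\pi_2$ and the relevant inductions are SI by \Cref{T:si}.

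**The additive step.** Once on the level of the inducing representation, I use \Cref{T:intertwin}(1): $\Lambda(\pi_1,\Z(\fm)\times\Z(\fn'))=\Lambda(\pi_1,\Z(\fm))+\Lambda(\pi_1,\Z(\fn'))$, valid because $\pi_1$ is irreducible. It remains to identify the two summands: $\Lambda(\pi_1,\Z(\fn'))=\Lambda(\Dde(\pi_1),\Dde(\pi_2))$ and $\Lambda(\pi_1,\Z(\fm))=\Lambda(\pi_1,\fa(\pi_2))$. The second is immediate since $\fa(\pi_2)=\fm$ by definition. For the first, the key input is hypothesis (1): since $\pi_1\times\Z(\fa')$ is irreducible for every component $\fa'$ of $\fa$, and $\Z(\fm)=\Z(\fa_1)\times\cdots\times\Z(\fa_k)$ is a product of such basic pieces, $\pi_1$ commutes with $\Z(\fm)$; using hypothesis (2) and \Cref{L:strcom}/\Cref{L:fd0}, $\Lambda(\pi_1,\Z(\fm))$ contributes exactly as expected and, symmetrically, the "$\fa$-saturated part" of $\pi_1$ can be peeled off when pairing $\pi_1$ against $\Z(\fn')$: writing $\pi_1=\soc(\Z(\fm_1)\times\Dde(\pi_1))$ with $\fm_1$ $\fa$-saturated, $\Z(\fm_1)$ commutes with the $\fa$-reduced $\Z(\fn')$ because $\fa$-saturated and $\fa$-reduced modules form a torsion pair (as used in the proof of \Cref{L:indcomp}), forcing $\Lambda(\Z(\fm_1),\Z(\fn'))=0$, and then \Cref{T:intertwin}(1)–(2) collapse $\Lambda(\pi_1,\Z(\fn'))$ to $\Lambda(\Dde(\pi_1),\Z(\fn'))=\Lambda(\Dde(\pi_1),\Dde(\pi_2))$.

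**The main obstacle.** The delicate point is the very first reduction: controlling the scalar $\lambda$ in \Cref{T:intertwin}(2), i.e. proving $\Lambda(\pi_1,\pi_2)=\Lambda(\pi_1,\Z(\fm)\times\Z(\fn'))$ rather than merely $\le$. The inequality $\ge$ is automatic from functoriality; for $\le$ I need to know that the relevant pole does not jump when passing from the socle $\pi_2$ to the full induced module, and this is exactly where the $\square$-irreducibility of $\pi_1$ (hence the SI property of $\pi_1\times(-)$, \Cref{T:si}) and a multiplicity-one statement in the appropriate Jacquet module must be combined carefully, arguing as in \Cref{L:polevanish} that $M_{\pi_1,\pi_2}$ restricted to the "open-cell" subrepresentation is nonzero. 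I expect one also needs to check compatibility of the $\fa$-decomposition of $\pi_1$ with that of $\pi_2$ — concretely, that pairing a generic $\fa$-saturated piece against an $\fa$-reduced piece gives a strongly commuting pair on \emph{both} sides — which is where the torsion-pair formalism of \cite[§7]{LapMin25} does the real work. Everything else is bookkeeping with the additivity in \Cref{T:intertwin}(1), (3) and the translation between $\square$-irreducibility and isomorphism of $M$ in \Cref{L:strcom}.
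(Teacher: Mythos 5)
Your plan follows the same overall route as the paper: decompose $\pi_2$ by the $\fa$-decomposition $\pi_2\hra\fa(\pi_2)\times\Dde(\pi_2)$, use the functoriality \Cref{T:intertwin}(2) to compare intertwining operators, reduce to additivity, and then eliminate the cross-term $\Lambda(\fa(\pi_1),\Dde(\pi_2))$ via \Cref{L:polevanish}. However, the two places you correctly flag as the ``main obstacles'' are exactly where you leave genuine gaps, and your proposed mechanism for the first one is not the right one.

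For the nonvanishing of the scalar $\lambda$ in the first diagram, you suggest arguing ``as in \Cref{L:polevanish}'' via a multiplicity-one statement in a Jacquet module. That is not what makes this step work, and it is not clear such a multiplicity-one statement holds in general. The paper's mechanism is different and hinges on \Cref{L:prodsquare}: hypothesis (1) says $\pi_1\times\Z(\fa')$ is irreducible for every component $\fa'$, so together with hypothesis (2) and \Cref{L:prodsquare} one concludes $\pi_1\times\fa(\pi_2)\in\irs$. Then \Cref{T:si}(3) makes $\pi_1\times\fa(\pi_2)\times\Dde(\pi_2)$ SI with irreducible socle $\sigma$. If $\lambda=0$, the image of the nonzero map $M_{\pi_1,\fa(\pi_2)\times\Dde(\pi_2)}$ and the subrepresentation $\pi_1\times\pi_2$ would both supply $\sigma$ as a subrepresentation, forcing multiplicity $\ge 2$ in the composition series — contradicting SI. You never invoke \Cref{L:prodsquare}, and without it the socle argument does not close.

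For the second step (passing from $\Lambda(\pi_1,\Dde(\pi_2))$ to $\Lambda(\Dde(\pi_1),\Dde(\pi_2))$), you say the torsion pair formalism ``does the real work'' and that \Cref{T:intertwin}(1)–(2) ``collapse'' the expression, but you omit the actual argument. One needs a second commutative diagram comparing $M_{\fa(\pi_1)\times\Dde(\pi_1),\Dde(\pi_2)}$ with $M_{\pi_1,\Dde(\pi_2)}$, and again a nonvanishing argument for its scalar $\lambda'$. Here the paper cannot simply cite $\square$-irreducibility of a product: it instead uses \Cref{L:easylemma}, reducing the claim to a one-dimensional $\ho$-space which is established via Frobenius reciprocity and the Geometric Lemma together with $\fa$-reducedness of $\Dde(\pi_1)$ and $\Dde(\pi_2)$, and finally \Cref{T:si} for $\Dde(\pi_1)$. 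Only after that does \Cref{L:polevanish} kill the residual term $\Lambda(\fa(\pi_1),\Dde(\pi_2))$. Your sketch does not supply this chain, so the two key nonvanishing claims, which are the whole content of the lemma, remain unproven.
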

\begin{proof}
    By \Cref{T:intertwin}, there exists $\lambda\in \C$ such that the following diagram commutes
\[\begin{tikzcd}
    \fa(\pi_2)\times \Dde(\pi_2)\times\pi_1\arrow[rr,"M_{\pi_1,\fa(\pi_2)\times\Dde(\pi_2)}"]&&\pi_1\times \fa(\pi_2)\times \Dde(\pi_2)\\
    \pi_2\times\pi_1\arrow[rr,"\lambda M_{\pi_1,\pi_2}"]\arrow[u,hookrightarrow]&&\pi_1\times \pi_2\arrow[u,hookrightarrow]
\end{tikzcd}\]
    and $\lambda\neq 0$ if and only if \[\Lambda(\pi_1,\pi_2)=\Lambda(\pi_1,\Dde(\pi_2))+\Lambda(\pi_1,\fa(\pi_2)).\]
    By \Cref{L:prodsquare}, $\pi_1\times \fa(\pi_2)\in \irs$, thus the socle $\sigma\coloneq\soc(\pi_1\times \fa(\pi_2)\times \Dde(\pi_2))$ is irreducible. Hence $\sigma=\soc(\pi_1\times\pi_2)$ and if $\lambda=0$, it would appear as a subrepresentation with multiplicity $2$ in $\pi_1\times \fa(\pi_2)\times \Dde(\pi_2)$, a contradiction to \Cref{T:si}.
    It remains to show that $\Lambda(\pi_1,\Dde(\pi_2))=\Lambda(\Dde(\pi_1),\Dde(\pi_2))$. To see this, we will use the commutative diagram
    \[\begin{tikzcd}
    \Dde(\pi_2)\times\fa(\pi_1)\times\Dde(\pi_1)\arrow[rrr,"M_{\fa(\pi_1)\times\Dde(\pi_1),\Dde(\pi_2)}"]&&&\fa(\pi_1)\times\Dde(\pi_1)\times\Dde(\pi_2)\\
    \Dde(\pi_2)\times\pi_1\arrow[rrr,"\lambda' M_{\fa(\pi_1,\Dde(\pi_2)}"]\arrow[u,hookrightarrow]&&&\pi_1\times \Dde(\pi_2)\arrow[u,hookrightarrow]
\end{tikzcd}\] for a suitable $\lambda'\in \C$. We start by showing that $\lambda'\neq 0$. For this, it is, as in the first step, enough to show that the socle $\soc(\fa(\pi_1)\times\Dde(\pi_1)\times\pi_2)$ is irreducible, which by \Cref{L:easylemma} can be shown by proving that there exists a, up to a scalar, unique morphism 
\[\Dde(\pi_2)\times\Dde(\pi_1)\times \fa(\pi_1)\ra \fa(\pi_1)\times \Dde(\pi_1)\times\Dde(\pi_2).\]
By Frobenius reciprocity, the Geometric Lemma, and the fact that both $\Dde(\pi_1)$ and $\Dde(\pi_2)$ are $\fa$-reduced, we obtain that any such morphism induces a morphism
\[\Dde(\pi_2)\times\Dde(\pi_1)\ra \Dde(\pi_1)\times\Dde(\pi_2).\] The claim then follows from \Cref{T:si}. We therefore showed that 
 \[\Lambda(\pi_1,\Dde(\pi_2))=\Lambda(\Dde(\pi_1),\Dde(\pi_2))+\LL(\fa(\pi_1),\Dde(\pi_2)).\]
 By \Cref{L:polevanish}, $\LL(\fa(\pi_1),\Dde(\pi_2))$ vanishes and the claim follows.
\end{proof}
\begin{theorem}\label{T:main}
    Let $\fm,\fn\in \Ms(\rho)$ such that at least one of $\fm,\fn,\fm^*,\fn^*$ is balanced. Then
    \[\Lambda(\Z(\fm),\Z(\fn))=\hp(C(\fn),C(\fm)),\, \Lambda(\L(\fm),\L(\fn))=\hp(C(\fm),C(\fn)).\]
\end{theorem}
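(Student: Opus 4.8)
The plan is to reduce the general statement to the base case of cuspidal representations via the $\fa$-decomposition machinery, running an induction on the length of $\fm$ and $\fn$. First I would record that, by \Cref{L:aubinv} (together with $\Lambda(\Z(\fm),\Z(\fn))=\Lambda(\Z(\fn)^\lor,\Z(\fm)^\lor)$) and the corresponding symmetry $\hp(C,D)=\hp(D^*,C^*)$ on the geometric side, it suffices to treat the case where $\fm$ itself is balanced; the four cases $\fm,\fn,\fm^*,\fn^*$ balanced are interchanged by applying $(-)^*$ and $(-)^\lor$, which match on both sides. Also, by \Cref{C:block} and the analogous decomposition $\hp(C,D)=\prod$ over blocks (which follows since $\Hp$ and $\Ex$ vanish between modules supported on different cuspidal lines), we may assume $\fm,\fn\in\Ms(\rho)$ for a single $\rho$, identifying $\Ms(\rho)\cong\mq$ and hence working entirely inside $\Lambda(\did)$.

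The base case is $l(\fm)\le 1$. If $\fm=0$ everything is zero. If $\fm=[a,b]_\rho$ is a single segment, I would compute $\Lambda(\Z([a,b]_\rho),\Z(\fn))$ directly: using \Cref{T:zel}(1) to write $\Z([a,b]_\rho)=\soc(\rho\nr^a\times\cdots\times\rho\nr^b)$, then \Cref{T:intertwin}(1) to expand $\Lambda(\rho\nr^a\times\cdots\times\rho\nr^b,\Z(\fn))=\sum_{j=a}^b\Lambda(\rho\nr^j,\Z(\fn))$, then \Cref{L:basecase} and \Cref{C:block} to see each term counts the multiplicity of $[\rho\nr^j]$-type constituents; and \Cref{T:intertwin}(2) to pass from the induced representation to its socle $\Z([a,b]_\rho)$, controlling the scalar $\lambda$. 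On the geometric side $\hp(C(\fn),C([a,b]_\rho))$ for a segment is a standard computation with $\Pi$-modules (the module $\mu_+([a,b])$ is the interval module, and $\Hp$ into it is easy to read off). I expect these two to agree by a direct inspection.

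For the inductive step, assume $l(\fm)\ge 2$ with $\fm$ balanced. Apply \Cref{L:bala}/its representation-theoretic counterpart to obtain a basic $\fa\in\Ms(\rho)$ with $\fa(\fm),\Dde(\fm)\ne 0$, with $\Dde(\fm)=\fm'$ balanced, and with $\Z(\fa')\times\Z(\fm)$ irreducible for every component $\fa'$ of $\fa$. Since $\Dde(\fm)$ is balanced and shorter, and $\fa(\fm)$ is $\fa$-saturated (hence by \Cref{L:indint} a product of basic pieces, each of shorter length), both are covered by induction; moreover $\Z(\fm)$ and $\Z(\Dde(\fm))$ are $\square$-irreducible by the regularity/balanced criterion. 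So \Cref{L:central} applies with $\pi_1=\Z(\fm)$, giving
\[
\Lambda(\Z(\fm),\Z(\fn))=\Lambda(\Z(\Dde(\fm)),\Z(\Dde(\fn)))+\Lambda(\Z(\fm),\Z(\fa(\fn))).
\]
The first summand equals $\hp(\Dde(C(\fn)),\Dde(C(\fm)))$ by induction (length of $\Dde(\fm)$ dropped). For the second summand, $\fa(\fn)$ is $\fa$-saturated, so $\Z(\fa(\fn))$ is a product of $\Z(\fa_i)$ with $\fa_i$ basic of the same type as $\fa$; using \Cref{T:intertwin}(1) I would split $\Lambda(\Z(\fm),\Z(\fa(\fn)))=\sum_i\Lambda(\Z(\fm),\Z(\fa_i))$ and handle each $\Lambda(\Z(\fm),\Z(\fa_i))$ by the base-case segment computation (now with the roles reversed, which is legitimate since $\fa_i$ is basic). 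On the geometric side, \Cref{L:indcomp} — whose hypothesis is exactly that $C(\fm)$ strongly commutes with every subcomponent of $C(\fa)$, supplied by \Cref{L:bala}(3) — gives precisely
\[
\hp(C(\fn),C(\fm))=\hp(\Dde(C(\fn)),\Dde(C(\fm)))+\hp(\fa(C(\fn)),C(\fm)),
\]
and $\hp(\fa(C(\fn)),C(\fm))$ decomposes over the components $\fa_i$ of $\fa$ matching the representation-theoretic split. Matching the two identities term by term, the claim for $\fm$ follows. The second assertion $\Lambda(\L(\fm),\L(\fn))=\hp(C(\fm),C(\fn))$ is then immediate from the first applied to $\fm^*,\fn^*$ together with $\L(\fm)=\Z(\fm^*)$, $C(\fm^*)=C(\fm)^*$, and the symmetry $\hp(C(\fm)^*,C(\fn)^*)=\hp(C(\fn),C(\fm))$ from the lemma in the introduction.

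The main obstacle I anticipate is the bookkeeping in \Cref{L:central}'s second summand: verifying that $\Lambda(\Z(\fm),\Z(\fa(\fn)))$ — a pole order involving the full balanced $\fm$ against an $\fa$-saturated multisegment — matches $\hp(\fa(C(\fn)),C(\fm))$ on the nose, rather than just up to the Crawley--Boevey correction. One must be careful that the strong-commutation input from \Cref{L:bala}(3) is used on the correct side (subcomponents of $C(\fa)$ with $C(\fm)$, not with $C(\fn)$), and that the base-case segment formula is symmetric enough to be applied with $\Z(\fm)$ in the first slot and a basic $\Z(\fa_i)$ in the second. A secondary subtlety is tracking the non-vanishing of all the scalars $\lambda,\lambda'$ in the commuting diagrams of \Cref{L:central}, which is where the $\square$-irreducibility of $\Z(\fm)$ and $\Z(\Dde(\fm))$ — and hence the \emph{balanced} hypothesis — is genuinely essential and cannot be dropped.
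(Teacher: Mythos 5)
The overall strategy here — reduce by symmetry, run an induction feeding $\Cref{L:bala}$ into $\Cref{L:central}$ on the representation side and $\Cref{L:indcomp}$ on the geometric side, then peel off the $\Dde$ and $\fa$ pieces and match — is exactly the argument the paper gestures at, so you have identified the right skeleton. Two points deserve a closer look, though.

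First, a small miscalibration of the base case. Since $l([a,b]_\rho)=b-a+1$, the condition $l(\fm)\le 1$ forces $\fm=[a,a]_\rho$ (a single cuspidal), not a general segment $[a,b]_\rho$; a segment of length $\ge 2$ already satisfies the hypothesis of $\Cref{L:bala}$ and so belongs to the inductive step, not the base case. Correspondingly, the expansion $\Lambda(\rho\nr^a\times\cdots\times\rho\nr^b,\Z(\fn))=\sum_j\Lambda(\rho\nr^j,\Z(\fn))$ is the wrong place to start; you only ever need $\Lambda(\rho\nr^a,\Z(\fn))$, and $\Cref{L:basecase}$ by itself does not give that since $\Z(\fn)$ need not be cuspidal, so the base case requires its own (short) argument, e.g.\ via $\Cref{L:polevanish}$ and the Geometric Lemma rather than just $\Cref{T:intertwin}(1)$.

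Second, and more seriously, your treatment of $\Lambda(\Z(\fm),\Z(\fa(\fn)))$ has a circularity risk you raise but do not close. After splitting over the basic pieces $\fa_i$, you invoke ``the base case with roles reversed.'' Using $\Cref{L:aubinv}$ and $(-)^\lor$ to bring a segment into the first slot is fine, but that segment has length $l(\fa_i)$, which can be arbitrarily large and in particular need not be smaller than $l(\fm)$; so an induction on $l(\fm)$ alone does not terminate on this term. You either need a two-variable induction on $l(\fm)+l(\fn)$ together with a separate, non-inductive argument for the degenerate case where $\fn$ is already $\fa$-saturated (so that $\Dde(\fn)=0$ and the recursion is vacuous), or you need to establish the identity $\Lambda(\Z(\fm),\Z(\fa_i))=\hp(C(\fa_i),C(\fm))$ directly for $\fa_i$ basic — for instance by exploiting that strong commutation gives you the sum $\Lambda(\Z(\fm),\Z(\fa_i))+\Lambda(\Z(\fa_i),\Z(\fm))$ and then using $\Cref{L:polevanish}$ or the Coxeter-functor description of $\alpha_+$ to pin down one of the two summands. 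As written, the reduction does not obviously bottom out; to be fair, the paper's own proof is equally terse on this point, but since you chose to spell it out you should make the induction variable and the $\fa$-saturated corner case explicit.
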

\begin{proof}
    The proof follows by induction, the base case being clear. Moreover, the second equality follows from the first via \Cref{L:aubinv}. For the induction step, we can apply \Cref{L:bala} and \Cref{L:indcomp} on the homological side and \Cref{L:indint} and \Cref{L:central} on the representation theoretical side.
\end{proof}
We come now to the main conjecture of the paper.
\begin{conjecture}\label{C:int}
    Let $\fm,\fn\in \Ms(\rho)$. Then
 \[ \Lambda(\Z(\fm),\Z(\fn))=\hp(C(\fn),C(\fm)).\]
\end{conjecture}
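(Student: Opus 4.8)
The plan is to prove \Cref{C:int} by extending the induction behind \Cref{T:main}. As there, one first reduces via \Cref{C:block} to a single cuspidal block and fixes $\fm,\fn\in\Ms(\rho)$; the induction is run on $l(\fm)+l(\fn)$, with base cases treated as in the proof of \Cref{T:main}. The inductive step couples two peeling lemmas. On the geometric side, \Cref{L:indcomp} gives, for a basic $\fa$ all of whose subcomponents strongly commute with $C(\fm)$,
\[\hp(C(\fn),C(\fm))=\hp(\Dde(C(\fn)),\Dde(C(\fm)))+\hp(\fa(C(\fn)),C(\fm)).\]
On the representation-theoretic side, \Cref{L:central} (through \Cref{L:indint} and \Cref{L:polevanish}) gives, if $\Z(\fa')\times\Z(\fm)$ is irreducible for every component $\fa'$ of $\fa$ and both $\Z(\fm)$ and $\Dde(\Z(\fm))$ are $\square$-irreducible,
\[\Lambda(\Z(\fm),\Z(\fn))=\Lambda(\Dde(\Z(\fm)),\Dde(\Z(\fn)))+\Lambda(\Z(\fm),\fa(\Z(\fn))).\]
One then matches the two decompositions: the $\Dde$--$\Dde$ terms agree by the inductive hypothesis, while the remaining $\fa$-saturated contributions both vanish thanks to the strong-commutation hypothesis. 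For balanced $\fm$ of length $\ge 2$, \Cref{L:bala} furnishes such an $\fa$ and, crucially, guarantees that $\Dde(\fm)$ stays balanced, so the induction closes.

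The argument currently stops precisely at \Cref{L:bala}. For a general $\fm$ of length $\ge 2$ that is not a sum of pairwise unlinked segments, a basic $\fa$ with $\fa(\fm),\Dde(\fm)\ne 0$ whose components satisfy the irreducibility hypothesis of \Cref{L:central} need not exist, and even when one does, $\Dde(\fm)$ may fail to be balanced, so the inductive hypothesis no longer concerns an object of the same kind. The main task is therefore to replace \Cref{L:bala} by something valid for all $\fm$. The natural relaxation is to allow an arbitrary basic $\fa$ and track the error: the commutative square in the proof of \Cref{L:central} still produces
\[\Lambda(\Z(\fm),\Z(\fn))=\Lambda(\Dde(\Z(\fm)),\Dde(\Z(\fn)))+\Lambda(\Z(\fm),\fa(\Z(\fn)))+\varepsilon,\]
with $\varepsilon\ge 0$ measuring the failure of $\Z(\fm)$ to strongly commute with the peeled-off $\fa$-saturated part, while \Cref{L:indcomp} must be upgraded to a version carrying a matching error $\varepsilon'$ on the geometric side; the heart of the matter becomes the identity $\varepsilon=\varepsilon'$. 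Since $\alpha$ is known exactly, $\alpha(\Z(\fm),\Z(\fn))=-(\gdim(\fm),\gdim(\fn))$, and the Crawley--Boevey identity ties $\ex$ to $\hp$ and $(\gdim(\fm),\gdim(\fn))$, this should ultimately amount to understanding how $\Lambda$ and $\hp$ transform under the generic-extension product $*$, and not merely under strong commutation --- the intertwining-operator counterpart of \cite[Conjecture 5.1]{LapMin25}.

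A complementary route is to prove the symmetric statement $\fd(\Z(\fm),\Z(\fn))=\ex(C(\fn),C(\fm))$ and then peel off a single pole. Given the known value of $\alpha$ and the Crawley--Boevey identity, \Cref{C:int} for the pair $(\fm,\fn)$ is equivalent to the conjunction of $\fd(\Z(\fm),\Z(\fn))=\ex(C(\fn),C(\fm))$ with one of the two equalities $\Lambda(\Z(\fm),\Z(\fn))=\hp(C(\fn),C(\fm))$ and $\Lambda(\Z(\fn),\Z(\fm))=\hp(C(\fm),C(\fn))$, and \Cref{L:aubinv} lets one trade $\fm$ for $\fm^*$ en route. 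Of these, the inequality $\Lambda(\Z(\fm),\Z(\fn))\le\hp(C(\fn),C(\fm))$ looks the more accessible half: a refinement of \Cref{L:polevanish} should bound $\Lambda(\Z(\fm),\Z(\fn))$ in terms of the multiplicity of $\Z(\fm)\otimes\Z(\fn)$ in $r_{\deg(\fm),\deg(\fn)}(\Z(\fn)\times\Z(\fm))$, and that multiplicity is controlled by the geometry of the conormal bundles attached to $C(\fm)$ and $C(\fn)$, which is exactly where $\hp$ enters, as in the generic-extension bookkeeping of \cite{AizLap}.

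I expect the combinatorial core --- producing the substitute for \Cref{L:bala}, equivalently pinning down how $\Lambda$ and $\hp$ behave under $*$ rather than only under strong commutation --- to be the main obstacle. The verification of Leclerc's example carried out in the paper, the smallest non-rigid $C$ with $\ex(C,C)=0$, already shows that the equality is delicate exactly there, and any proof of the general case must explain why the two correction terms agree even when no balancedness-preserving reduction is available.
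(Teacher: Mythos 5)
The statement you are addressing is stated in the paper as an open conjecture (\Cref{C:int}), not as a theorem: the paper offers no proof, and the only case it establishes is \Cref{T:main}, under the additional hypothesis that at least one of $\fm,\fn,\fm^*,\fn^*$ is balanced. Your write-up is in fact candid about this --- you say the argument ``currently stops precisely at \Cref{L:bala}'' --- so this is not a proof of the conjecture but an analysis of the obstruction, and as such it is accurate: the induction behind \Cref{T:main} hinges on \Cref{L:bala}, which for a balanced $\fm$ produces a basic $\fa$ whose components commute strongly with $C(\fm)$ and whose $\fa$-decomposition preserves balancedness, and no such reduction is available in general. The two heuristic routes you sketch (tracking error terms $\varepsilon,\varepsilon'$ under an arbitrary basic $\fa$, or proving the symmetric $\fd=\ex$ identity first and then using \Cref{L:aubinv} plus Crawley--Boevey to peel off one pole) are sensible directions, but neither is carried out, so the conjecture remains exactly as open after your argument as before.

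One step in your reconstruction of the balanced case is not quite right and is worth flagging, since it would also affect any attempted extension. You say that ``the remaining $\fa$-saturated contributions both vanish thanks to the strong-commutation hypothesis.'' This is false: strong commutation asserts $\fd(\pi_1,\pi_2)=0$ on the representation side, equivalently $C_1*C_2=C_1\oplus C_2$ on the geometric side, which is an $\mathrm{Ext}$-vanishing condition and does not force $\Lambda(\Z(\fm),\fa(\Z(\fn)))$ or $\hp(\fa(C(\fn)),C(\fm))$ to be zero. In the proof of \Cref{T:main} these leftover terms are instead matched by a further application of the inductive hypothesis --- $\fa(\fn)$ is $\fa$-saturated, hence a sum of pairwise unlinked segments of a uniform shape, and so the pair $(\fm,\fa(\fn))$ is still covered by the balanced hypothesis with strictly smaller total length once $\Dde(\fn)\neq 0$; the remaining edge cases, where $\fn$ is itself $\fa$-saturated, reduce to \Cref{L:polevanish} or to the base case. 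Your diagnosis of the true obstacle --- understanding how $\Lambda$ and $\hp$ behave under the generic-extension product $*$ in general, not merely under strong commutation --- is the right place to look, but the paper provides no tools for that and you have not supplied them either.
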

We say $C_{\fm,\fn}$ holds if the above equation is satisfied.

The following is an easy consequence of the
the Crawley-Boevey identity\[\ex(C,D)=\hp(C,D)+\hp(D,C)-(\gdim(C),\gdim(D)),\]
see also \Cref{L:strcom}.
\begin{prop}\label{C:conjint}
Let $\fm,\fn\in \Ms(\rho)$ and assume that $C_{\fm,\fn}$ and $C_{\fm,\fn}$ hold. Then
\[\ex(C(\fm),C(\fn))=\fdp(\Z(\fm),\Z(\fn)).\] In particular, the following are then equivalent.
\begin{enumerate}
    \item $C(\fm)$ and $C(\fn)$ strongly commute.
\item $\Z(\fm)$ and $\Z(\fn)$ strongly commute.
\end{enumerate}
\end{prop}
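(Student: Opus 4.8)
The plan is to derive the displayed identity as a purely formal consequence of the two assumed instances of \Cref{C:int} together with the Crawley-Boevey identity. First I would expand the definition
\[\fdp(\Z(\fm),\Z(\fn))=\Lambda(\Z(\fm),\Z(\fn))+\Lambda(\Z(\fn),\Z(\fm))+\alpha(\Z(\fm),\Z(\fn)),\]
and then substitute: the two hypotheses are $C_{\fm,\fn}$ and $C_{\fn,\fm}$, so that $\Lambda(\Z(\fm),\Z(\fn))=\hp(C(\fn),C(\fm))$ and $\Lambda(\Z(\fn),\Z(\fm))=\hp(C(\fm),C(\fn))$, while $\alpha(\Z(\fm),\Z(\fn))=-(\gdim(\fm),\gdim(\fn))$ by the identity noted just after \Cref{C:block}. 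Since $C(\fm)$ and $C(\fn)$ have dimension vectors $\gdim(\fm)$ and $\gdim(\fn)$, adding the three contributions produces
\[\fdp(\Z(\fm),\Z(\fn))=\hp(C(\fm),C(\fn))+\hp(C(\fn),C(\fm))-(\gdim(C(\fm)),\gdim(C(\fn))),\]
and the right-hand side equals $\ex(C(\fm),C(\fn))$ by the Crawley-Boevey identity \cite{Cra}. This settles the claimed equality.

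For the equivalence I would use that each notion of strong commutation is the vanishing of the relevant invariant. On the representation-theoretic side this is the definition (see also \Cref{L:strcom}): $\Z(\fm)$ and $\Z(\fn)$ strongly commute if and only if $\fdp(\Z(\fm),\Z(\fn))=0$. On the geometric side I would argue that $C(\fm)*C(\fn)=C(\fm)\oplus C(\fn)$ if and only if $\ex(C(\fm),C(\fn))=0$: if $\ex(C(\fm),C(\fn))=0$, the generic short exact sequence defining $\mathcal{E}(C(\fm),C(\fn))$ splits, whence $C(\fm)*C(\fn)=\overline{\mathcal{E}(C(\fm),C(\fn))}=C(\fm)\oplus C(\fn)$; conversely, if $\ex(C(\fm),C(\fn))>0$ the generic extension is non-split and a dimension count distinguishes its closure from $C(\fm)\oplus C(\fn)$. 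Feeding the equality $\fdp(\Z(\fm),\Z(\fn))=\ex(C(\fm),C(\fn))$ into these two characterisations yields the equivalence of (1) and (2).

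The only step deserving care is this last geometric characterisation of strong commutation through the vanishing of $\ex$; depending on exactly how $*$ and strong commutation are set up in \cite{AizLap}, it is either immediate from the construction of $*$ via generic extensions or follows from the short dimension argument indicated above. Apart from that, the proof is a direct substitution, so I anticipate no genuine obstacle.
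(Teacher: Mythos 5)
Your argument is correct and follows essentially the same route as the paper, which states the result as an ``easy consequence'' of the Crawley-Boevey identity together with \Cref{L:strcom}: one expands $\fdp$, substitutes the two assumed instances of \Cref{C:int} (you correctly read the second hypothesis as $C_{\fn,\fm}$) and the identity $\alpha(\Z(\fm),\Z(\fn))=-(\gdim(\fm),\gdim(\fn))$, and then invokes Crawley-Boevey. The equivalence then follows since on the representation side strong commutation is by definition $\fdp=0$ (cf.\ \Cref{L:strcom}) and on the geometric side strong commutation, i.e.\ $C(\fm)*C(\fn)=C(\fm)\oplus C(\fn)$, amounts to $\ex(C(\fm),C(\fn))=0$ as you sketch, a standard fact about the operation $*$ in \cite{AizLap}.
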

\begin{rem}

    (1) If \Cref{C:rigid} is true, then \Cref{C:int} is true for any $\square$-irreducible representation $\Z(\fm)$ and $\fn=\fm$. 
    
    (2) Note that $C(\fn)$ commutes strongly with $C(\fm)$ if and only if $C(\fn)*C(\fm)=C(\fn+\fm)$, see \cite[Corollary 9.4]{AizLap}.
    On the other hand, if $Z(\fm)$ is $\square$-irreducible, then $\Z(\fm)\times \Z(\fn)=\Z(\fm+\fn)$ if and only if $\fdp(\Z(\fm),\Z(\fn))=0$ by \Cref{L:fd0}. Thus, the equivalence of the two notions of \emph{strongly commuting} would also follow from \Cref{C:rigid}, assuming $C(\fn)$ or $C(\fm)$ is rigid.

(3) 
Note that the opposite algebra $\Pi^\circ$ of $\Pi$ is isomorphic to $\Pi$.
The involution of $\Pi$ sending $e_i\mapsto e_{-i-1},f_i\mapsto f_{-i-1}$ gives rise to an autoequivalence of the category of $\Pi$-modules. Composing with $x\mapsto x^*$, where $x^*$ is the dual module, induces the involution $(-)^\lor\colon \co\mapsto \co,\, C(\fm)\mapsto C(\fm^\lor)$. In particular, for $\fn,\fm\in \mq$
\[\hp(C(\fm),C(\fn))=\hp(C(\fn^\lor),C(\fm^\lor)),\] reflecting the last property of \Cref{T:intertwin}.
 Moreover,
\[\hp(C(\fm),C(\fn))=\mathrm{Hom}_{\Pi^\circ}(C(\fm)^*,C(\fn)^*)=\hp(C(\fn^*),C(\fm^*)),\]
where for $C\in\co$ we denote by $C^*$ the irreducible component consisting of the dual modules in $C$. This reflects \Cref{L:aubinv}.

(4) Let $\fm$ be a multisegment such that $C(\fm)$ strongly commutes with itself and assume that $C_{\fm,\fm}$ holds. Then $\Z(\fm)\times Z(\fm)=\pi_+\oplus \pi_-$, where $M_{Z(\fm),\Z(\fm)}$ acts on $\pi_\pm$ via $\pm \lambda,\,\lambda\in\C$. Note that $\lambda\neq 0$ since $M_{Z(\fm),\Z(\fm)}$ is an involution. We fix the sign such that $\Z(\fm+\fm)$ is contained in $\pi_+$. Thus, $C(\fm)$ is in this case rigid if and only if $\pi_-=0$.

(5) As a final remark, we note that the category of $\Pi$-modules is a Calabi-Yau $2$-category, \emph{i.e.} there exists a natural isomorphism
\[\mathrm{Ext}_\Pi^2(x,y)\cong \Hp(y,x)^*,\] \emph{cf.} \cite[§8]{GeisLecSchr07}, \cite[§4.2]{BauKamTin},
for all $\Pi$-modules $x,y$ and the higher Ext-groups vanish. 
\end{rem}
Let us now analyze the following example due to Leclerc, see \cite{Lec03}. We consider all multisegments in $\Ms(\rho)\cong \mq$.
It is the smallest non-rigid multisegment that strongly commutes with itself.
Namely, let \[\fm_{Lec}=[4,5]+[2,4]+[3,3]+[1,2],\, \fm_1=[1,4]+[2,5],\,\fm_2=[1,2]+[2,3]+[3,4]+[4,5].\]
Then \[\Z(\fm_{Lec})\times \Z(\fm_{Lec})=\Z(2\fm_{Lec})\oplus \Z(\fm_1)\times \Z(\fm_2),\]
\[C(\fm_{Lec})*C(\fm_{Lec})=C( 2\fm_{Lec}),\, C(\fm_1)*C(\fm_2)=C(\fm_1\plus\fm_2).\]
Since $M_{Z(\fm_{Lec}),\Z(\fm_{Lec})}$ needs to square to a scalar, we see that up to a scalar the intertwining operator acts on one summand by $1$ and on the other by $-1$. In particular, we have \[\fd(\Z(\fm_{Lec}),\Z(\fm_{Lec}))=0\] and hence \[\Lambda(\Z(\fm_{Lec}),\Z(\fm_{Lec}))=-\frac{1}{2}(\gdim(\fm_{Lec}),\gdim(\fm_{Lec}))=2.\]
On the other hand, the Crawley-Boevey identity implies also $\hp(C(\fm_{Lec}),C(\fm_{Lec})=-\frac{1}{2}(\gdim(\fm_{Lec}),\gdim(\fm_{Lec}))$.

Moreover, we have that for all $\fn\in\mq$, 
\[\Lambda(\Z(\fm_1+\fm_2),\Z(\fn))=\Lambda(\Z(\fm_1),\Z(\fn))+\Lambda(\Z(\fm_2),\Z(\fn))=\]
\[=\hp(C(\fn),C(\fm_1))+\hp(C(\fn),C(\fm_2))=\hp(C(\fn),C(\fm_1+\fm_2)).\]
Note that this last example is of interest, since $\fm_1+\fm_2$ is the first example for which the singular support of the $G_{2\gdim(\fm_{Lec})}$-invariant intersection-cohomology complex on $E^+({2\gdim(\fm_{Lec})})$ corresponding to the orbit parametrized by $\fm_1+\fm_2$ does not equal $C(\fm_1+\fm_2)$, but instead $C(\fm_1+\fm_2)\cup C(\fm_{Lec}+\fm_{Lec})$. For more on this, see for example \cite{KashSai}.
\subsection{Normalized intertwining operators}\label{S:norint}
In applications to the global representation theory of automorphic forms, the normalized intertwining operators 
\[M_{\pi,\sigma}^{nr}(s)\colon \sigma\abs^s\times\pi\abs^{-s}\ra\pi\abs^{-s}\times\sigma\abs^s\] are often used instead of the intertwining operators we considered so far. The normalization factor usually takes the following form.
We fix an additive character $\psi$ of $\Ff$ and recall the Rankin-Selberg $L$-factors $L(s,\L(\fm)\times \L(\fn))$ and $\epsilon$-factors $\epsilon(s,\L(\fm),\L(\fn),\psi)$ for any two multisegments $\fm,\fn$, \emph{cf.} \cite{JacPiaSha}. Following \cite[§I]{MoeWal89}, see also \cite{Sha83}, we set
\[r(s,\L(\fm),\L(\fn))\coloneq \frac{L(2s,\L(\fm)^\lor\times\L(\fn))}{L(1+2s,\L(\fm)^\lor\times\L(\fn))\epsilon(2s,\L(\fm)^\lor\times\L(\fn),\psi)}\]
and
\[M_{\L(\fm),\L(\fn)}^{nr}(s)\coloneq r(s,\L(\fm),\L(\fn))^{-1}M_{\L(\fm),\L(\fn)}(s).\]
We let $\Lambda^{nr}(\pi,\sigma)$ be order of $M_{\pi,\sigma}^{nr}(s)$ at $0$.
For our considerations we are just interested in the order of the pole of $r(s,\L(\fm),\L(\fn))$ at $s=0$, which we denote by $\alpha_+(\L(\fm),\L(\fn))$.

The following two properties completely determine $\alpha_+$, see \cite[Remark I.2.(5), Eq.(1) I.4]{MoeWal89}.
\begin{enumerate}
    \item Let $\fm,\fn\in \Ms$ and write $\fm=\De_1+\ldots+\De_k,\, \fn=\Gamma_1+\ldots+\Gamma_l$. Then 
    \[\alpha_+(\L(\fm),\L(\fn))=\prod_{i=1}^k\prod_{j=1}^l\alpha_+(\L(\De_i),\L(\Gamma_j)).\]
    \item If $\De,\De'$ are two segments, then
    \[\alpha_+(\L(\De),\L(\De'))=\begin{cases}
        1&\text{ if }\sh{\De'}\prec\De, \De\nprec\De',\\-1&\text{ if }\De'\prec\De,\, \sh{\De}\nprec\De',\\0&\text{otherwise}.
    \end{cases}\]
\end{enumerate}
\begin{corollary}
    Let $\fm,\fn\in \Ms$ with block decomposition $\fm=\fm_1+\ldots+\fm_k, \fn=\fn_1+\ldots+\fn_k$ as in \Cref{C:block}.
    Then \[\alpha_+(\L(\fm),\L(\fn))=\alpha_+(\L(\fm_1),\L(\fn_1))+\ldots+\alpha_+(\L(\fm_k),\L(\fn_k))\]
    and for all $\ain{i}{1}{k}$
    \[\alpha_+(\L(\fm_i),\L(\fn_i))=\alpha_+(\mu_+(\fm_i),\mu_+(\fn_i))\]
\end{corollary}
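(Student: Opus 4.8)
\emph{Proof sketch.} The first identity is formal once we recall the two rules that determine $\alpha_+$. By the multiplicativity rule, $\alpha_+(\L(\fm),\L(\fn))$ equals the sum of $\alpha_+(\L(\De),\L(\Gamma))$ over all pairs consisting of a segment $\De$ of $\fm$ and a segment $\Gamma$ of $\fn$, each counted with multiplicity. If $\De$ and $\Gamma$ belong to different cuspidal lines, then neither $\sh{\Gamma}\prec\De$ nor $\Gamma\prec\De$ can hold --- "precedes" only makes sense within a single line --- so the segment rule forces $\alpha_+(\L(\De),\L(\Gamma))=0$; equivalently, the Rankin--Selberg $L$- and $\epsilon$-factors entering $r(s,\L(\De),\L(\Gamma))$ are units near $s=0$. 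Hence only pairs with $\De$ and $\Gamma$ in the same block contribute, and regrouping these by block and applying the multiplicativity rule in reverse within each block gives $\alpha_+(\L(\fm),\L(\fn))=\sum_i\alpha_+(\L(\fm_i),\L(\fn_i))$, with the convention that a block in which $\fm_i$ or $\fn_i$ vanishes contributes $0$.

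For the second identity I would reduce, in the same fashion, to the case of two segments: the left-hand side is additive over the segment decompositions of $\fm_i,\fn_i$ by the multiplicativity rule, and the right-hand side is additive in each variable because $\mu_+$ turns a sum of segments into the corresponding direct sum while $\mathrm{Hom}_{Q^+}(-,-)$ and $\mathrm{Ext}_{Q^+}^1(-,-)$ are additive in each variable over direct sums. It then suffices, under the identification $\Ms(\rho)\cong\mq$ with $\De\leftrightarrow[a,b]$ and $\Gamma\leftrightarrow[c,d]$, to establish
\[\alpha_+(\L(\De),\L(\Gamma))=\dim_\C\mathrm{Hom}_{Q^+}\!\big(\mu_+([a,b]),\mu_+([c,d])\big)-\dim_\C\mathrm{Ext}_{Q^+}^1\!\big(\mu_+([c,d]),\mu_+([a,b])\big).\]
The right-hand side is read off from the representation theory of the equioriented type-$A$ quiver $Q^+$, whose indecomposables are exactly the interval modules $\mu_+([x,y])$: a standard computation (which one may organize via the Euler form identity $\dim_\C\mathrm{Hom}_{Q^+}-\dim_\C\mathrm{Ext}_{Q^+}^1=\langle-,-\rangle_+$ recalled in \Cref{S:homalg}) shows that $\dim_\C\mathrm{Hom}_{Q^+}(\mu_+([a,b]),\mu_+([c,d]))$ is $1$ precisely when $\sh{[c,d]}\prec[a,b]$, i.e.\ $c\le a\le d\le b$, and $0$ otherwise, and that $\dim_\C\mathrm{Ext}_{Q^+}^1(\mu_+([c,d]),\mu_+([a,b]))$ is $1$ precisely when $[c,d]\prec[a,b]$, and $0$ otherwise. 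So the right-hand side equals the indicator of $\sh{\Gamma}\prec\De$ minus the indicator of $\Gamma\prec\De$, and it remains to check that the segment rule for $\alpha_+$ produces the same value.

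That last comparison is the only genuinely delicate step. The events $\sh{\Gamma}\prec\De$ and $\Gamma\prec\De$ are not mutually exclusive --- they hold simultaneously exactly when $c<a\le d<b$ --- and in that range the pole and the zero contributions to $r(s,\L(\De),\L(\Gamma))$ at $s=0$ cancel, so that $\alpha_+(\L(\De),\L(\Gamma))=0$, in agreement with the quiver side, where $\mathrm{Hom}_{Q^+}$ and $\mathrm{Ext}_{Q^+}^1$ are then both one-dimensional. Once this overlap is accounted for --- together with the implications $\sh{\Gamma}\prec\De\Rightarrow\De\nprec\Gamma$ and $\Gamma\prec\De\Rightarrow\sh{\De}\nprec\Gamma$, which hold automatically and make the "does not precede" clauses of the segment rule redundant --- the value of $\alpha_+(\L(\De),\L(\Gamma))$ is in every positional case the indicator of $\sh{\Gamma}\prec\De$ minus the indicator of $\Gamma\prec\De$, and the identity follows. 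The remaining work, namely the verification of the $\mathrm{Hom}$- and $\mathrm{Ext}^1$-formulas for interval modules and the finite positional case-check for $[a,b]$ and $[c,d]$, is routine and I would not carry it out here.
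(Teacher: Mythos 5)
Your proof is correct and follows essentially the same route as the paper's, which simply records the $\mathrm{Hom}_{Q^+}$ and $\mathrm{Ext}_{Q^+}^1$ formulas for interval modules and declares that the claim follows; you supply the additivity reductions (across blocks, then across segments within a block) and the segment-by-segment comparison that the paper leaves implicit. Your observation that the segment rule for $\alpha_+$ must be read as a difference of indicators, with the two cases overlapping exactly when $c<a\le d<b$ and the ``does not precede'' clauses automatically satisfied, is a correct and worthwhile clarification of a point the paper's case-statement glosses over.
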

\begin{proof}
    The claim follows immediately once one computes the following.
    \[\dim_\C\ho_{Q^+}(\mu_+(\De),\mu_+(\Gamma))=\begin{cases}
        1&\text{ if }\sh{\Gamma}\prec\De,\\0&\text{ otherwise.}\\
    \end{cases}\]
      \[\dim_\C\mathrm{Ext}_{Q^+}^1(\mu_+(\De),\mu_+(\Gamma))=\begin{cases}
        1&\text{ if }\De\prec\Gamma,\\0&\text{ otherwise.}\\
    \end{cases}\]
\end{proof}
Recall from \Cref{S:coxeter} that 
\[\dim_\C\ker(\mathcal{T}_{x,y})=\dim_\C\Hp(x,y)\]and\[ \dim_\C\mathrm{coker}(\mathcal{T}_{x,y})=\dim_\C\Hp(x,y)-\alpha_+(x,y).\]
It follows that \Cref{C:int} is equivalent to the following conjecture.
\begin{conjecture}\label{C:intnor}    
    Let $\fm,\fn\in\Ms(\rho)$. Then \[\Lambda^{nr}(\L(\fm),\L(\fn))=\dim_\C\mathrm{coker}(\mathcal{T}_{C(\fm),C(\fn)}).\]
\end{conjecture}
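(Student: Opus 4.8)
The plan is to establish \Cref{C:intnor} by proving it equivalent, for each fixed pair $\fm,\fn\in\Ms(\rho)$, to the instance $C_{\fn,\fm}$ of \Cref{C:int}; running over all pairs this reduces \Cref{C:intnor} to \Cref{C:int}, and since the equivalence is pair by pair, combining it with \Cref{T:main} yields \Cref{C:intnor} unconditionally whenever one of $\fm,\fn,\fm^*,\fn^*$ is balanced. The mechanism is to rewrite the two sides of the conjectural identity so that a common summand cancels.

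First I would unwind the left-hand side. By the normalization $M_{\L(\fm),\L(\fn)}^{nr}(s)=r(s,\L(\fm),\L(\fn))^{-1}M_{\L(\fm),\L(\fn)}(s)$ and the definition of $\alpha_+(\L(\fm),\L(\fn))$ as the pole order of $r(s,\L(\fm),\L(\fn))$ at $s=0$,
\[\Lambda^{nr}(\L(\fm),\L(\fn))=\Lambda(\L(\fm),\L(\fn))-\alpha_+(\L(\fm),\L(\fn)).\]
Unwinding $\L(\fm)=\Z(\fm^*)$ and applying \Cref{L:aubinv} (together with $(-)^{**}=\mathrm{id}$) gives $\Lambda(\L(\fm),\L(\fn))=\Lambda(\Z(\fm^*),\Z(\fn^*))=\Lambda(\Z(\fn),\Z(\fm))$. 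Since $\fm,\fn$ lie in the single block $\Ms(\rho)$, the Corollary in \Cref{S:norint} identifies $\alpha_+(\L(\fm),\L(\fn))$ with the quiver quantity
\[\alpha_+(\mu_+(\fm),\mu_+(\fn))=\dim_\C\mathrm{Hom}_{Q^+}(\mu_+(\fm),\mu_+(\fn))-\dim_\C\mathrm{Ext}^1_{Q^+}(\mu_+(\fn),\mu_+(\fm)).\]
Hence the left-hand side equals $\Lambda(\Z(\fn),\Z(\fm))-\alpha_+(\mu_+(\fm),\mu_+(\fn))$.

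Next the right-hand side. By the Coxeter exact sequence of \Cref{S:coxeter}, for $\Pi$-modules $x\in C(\fm)$, $y\in C(\fn)$ one has $\dim_\C\mathrm{coker}(\mathcal{T}_{x,y})=\dim_\C\Hp(x,y)-\alpha_+(x,y)$, with $\alpha_+(x,y)$ computed from the underlying $Q^+$-representations $p_+(x),p_+(y)$. The function $(x,y)\mapsto\dim_\C\mathrm{coker}(\mathcal{T}_{x,y})=\dim_\C\mathrm{Ext}^1_{Q^+}(x,y)-\mathrm{rank}(\mathcal{T}_{x,y})$ is upper semicontinuous, so its minimum over the irreducible variety $C(\fm)\times C(\fn)$ --- which by definition is $\dim_\C\mathrm{coker}(\mathcal{T}_{C(\fm),C(\fn)})$ --- is attained on a dense open subset. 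Since $\{x\in C(\fm):p_+(x)\cong\mu_+(\fm)\}$ is dense in $C(\fm)=\overline{p_+^{-1}(\Z_+(\fm))}$ and locally closed, it is open, and likewise for $C(\fn)$; intersecting with $\{(x,y):\dim_\C\Hp(x,y)=\hp(C(\fm),C(\fn))\}$ we obtain a common point at which $\alpha_+(x,y)=\alpha_+(\mu_+(\fm),\mu_+(\fn))$ and all the relevant minima are attained. Evaluating the displayed identity there gives
\[\dim_\C\mathrm{coker}(\mathcal{T}_{C(\fm),C(\fn)})=\hp(C(\fm),C(\fn))-\alpha_+(\mu_+(\fm),\mu_+(\fn)).\]
Cancelling the common term $\alpha_+(\mu_+(\fm),\mu_+(\fn))$ in the two rewrites, \Cref{C:intnor} for $(\fm,\fn)$ is seen to be exactly $\Lambda(\Z(\fn),\Z(\fm))=\hp(C(\fm),C(\fn))$, which is $C_{\fn,\fm}$.

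The points demanding care --- where I would concentrate the write-up --- are: (i) the sign conventions, namely that $r^{-1}$ contributes a pole of order $-\alpha_+(\L(\fm),\L(\fn))$ at $s=0$, with the convention that a negative pole order is a zero; this should be cross-checked in the rank-one case against \Cref{L:basecase} and the explicit formula for $\alpha_+(\L(\De),\L(\De'))$; (ii) the direction of the Aubert--Zelevinsky twist, so that $\Lambda(\L(\fm),\L(\fn))$ matches $\Lambda(\Z(\fn),\Z(\fm))$ and not $\Lambda(\Z(\fm),\Z(\fn))$; and (iii) the brief semicontinuity--genericity argument guaranteeing that the minimum defining $\dim_\C\mathrm{coker}(\mathcal{T}_{C(\fm),C(\fn)})$ is attained at a point that is simultaneously generic for the $Q^+$-structure and for $\dim_\C\Hp$. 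Beyond these verifications the argument is formal; the only genuine obstacle to an unconditional statement is \Cref{C:int} itself, which remains open, so the best one proves here is the equivalence together with the balanced cases coming from \Cref{T:main}.
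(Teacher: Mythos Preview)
Your proposal is correct and follows the same route the paper takes: the paper states in one line that ``it follows that \Cref{C:int} is equivalent to the following conjecture'', and your argument is precisely the unpacking of that sentence---subtracting the common normalization term $\alpha_+$ from both sides (representation-theoretic via $\Lambda^{nr}=\Lambda-\alpha_+$ and geometric via $\dim\mathrm{coker}(\mathcal{T}_{x,y})=\dim\Hp(x,y)-\alpha_+(x,y)$), together with \Cref{L:aubinv} to pass from $\L$ to $\Z$. Your derivation of the balanced case from \Cref{T:main} is likewise exactly the Corollary the paper records immediately after \Cref{C:intnor}. The only point where you add substance beyond the paper is the genericity/semicontinuity discussion in (iii), which the paper leaves implicit; note that because the source and target of $\mathcal{T}_{x,y}$ do not have constant dimension on all of $C(\fm)\times C(\fn)$, the cleanest way to finish that step is to restrict first to the dense open where $p_+(x)\cong\mu_+(\fm)$ and $p_+(y)\cong\mu_+(\fn)$ (so $\alpha_+(x,y)$ is constant there) and then invoke upper semicontinuity of $\dim\Hp$.
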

\begin{corollary}
    Let $\fm,\fn\in\Ms(\rho)$ such that at least one of $\fm,\fn,\fm^*,\fn^*$ is balanced. Then \[\Lambda^{nr}(\L(\fm),\L(\fn))=\dim_\C\mathrm{coker}(\mathcal{T}_{C(\fm),C(\fn)}).\]
\end{corollary}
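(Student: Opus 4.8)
The plan is to deduce the corollary formally from \Cref{T:main} by unwinding the normalization factor; concretely this is just the equivalence of \Cref{C:int} and \Cref{C:intnor} observed in the text, restricted to the range of $\fm,\fn$ handled by \Cref{T:main}, so no homological or representation-theoretic input beyond \Cref{T:main} should be required.

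First I would record the elementary identity relating the two orders. Since
\[M^{nr}_{\L(\fm),\L(\fn)}(s)=r(s,\L(\fm),\L(\fn))^{-1}M_{\L(\fm),\L(\fn)}(s)\]
and $\alpha_+(\L(\fm),\L(\fn))$ is by definition the order of the pole of $r(s,\L(\fm),\L(\fn))$ at $s=0$, one has
\[\Lambda^{nr}(\L(\fm),\L(\fn))=\Lambda(\L(\fm),\L(\fn))-\alpha_+(\L(\fm),\L(\fn)).\]
Because $\fm,\fn\in\Ms(\rho)$ lie in a single block, the corollary stated just before \Cref{C:intnor} (its case $k=1$) identifies $\alpha_+(\L(\fm),\L(\fn))$ with the quiver quantity $\alpha_+(\mu_+(\fm),\mu_+(\fn))$; and since $p_+$ carries a generic point of $C(\fm)$ into the orbit $\Z_+(\fm)$, whose points are $G$-conjugates of $\mu_+(\fm)$ (and likewise for $C(\fn)$), this is precisely the value of $\alpha_+(x,y)$ for generic $x\in C(\fm)$, $y\in C(\fn)$.

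Next I would read off the cokernel side. The identities recalled in \Cref{S:coxeter}, namely
\[\dim_\C\Hp(x,y)=\dim_\C\ker(\mathcal T_{x,y}),\qquad \dim_\C\Hp(x,y)-\alpha_+(x,y)=\dim_\C\mathrm{coker}(\mathcal T_{x,y}),\]
hold for every pair of $\Pi$-modules $x,y$. Evaluating at a generic $(x,y)\in C(\fm)\times C(\fn)$, where $\dim_\C\Hp(x,y)=\hp(C(\fm),C(\fn))$ and $\alpha_+(x,y)=\alpha_+(\mu_+(\fm),\mu_+(\fn))$ by the previous paragraph, gives
\[\dim_\C\mathrm{coker}(\mathcal T_{C(\fm),C(\fn)})=\hp(C(\fm),C(\fn))-\alpha_+(\mu_+(\fm),\mu_+(\fn)).\]
The only point needing a short argument is that the minimum defining $\mathrm{coker}(\mathcal T_{C(\fm),C(\fn)})$ is genuinely attained on such a generic locus: rewriting $\dim_\C\mathrm{coker}(\mathcal T_{x,y})=\dim_\C\mathrm{Ext}^1_{Q^+}(x,y)-\operatorname{rank}(\mathcal T_{x,y})$ from the exact sequence of \Cref{S:coxeter}, one checks by upper semicontinuity that $\dim_\C\mathrm{Ext}^1_{Q^+}(x,y)$ is minimal and $\operatorname{rank}(\mathcal T_{x,y})$ is maximal on the dense open locus where $p_+(x)$ and $p_+(y)$ lie in their open orbits, and that this locus meets the one on which $\dim_\C\Hp(x,y)$ is minimal.

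Finally I would invoke \Cref{T:main}: under the hypothesis that one of $\fm,\fn,\fm^*,\fn^*$ is balanced, its second equality reads $\Lambda(\L(\fm),\L(\fn))=\hp(C(\fm),C(\fn))$. Combining this with the two displayed identities above, and with the identification $\alpha_+(\L(\fm),\L(\fn))=\alpha_+(\mu_+(\fm),\mu_+(\fn))$, yields $\Lambda^{nr}(\L(\fm),\L(\fn))=\dim_\C\mathrm{coker}(\mathcal T_{C(\fm),C(\fn)})$, as claimed. I do not anticipate a genuine obstacle: all the substance is in \Cref{T:main}, and what remains is the formal identity $\Lambda^{nr}=\Lambda-\alpha_+$, the already-proved identification of $\alpha_+(\L(\fm),\L(\fn))$ with $\alpha_+(\mu_+(\fm),\mu_+(\fn))$, and the harmless semicontinuity check. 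The one thing to keep straight is that one works with $\L=\Z\circ(-)^*$ throughout — this is why \Cref{T:main} enters through its ``$\L$''-equality rather than its ``$\Z$''-equality — but that is bookkeeping, not mathematics.
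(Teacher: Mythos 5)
Your proof is correct and follows essentially the same route the paper intends: the corollary is exactly the equivalence of \Cref{C:int} and \Cref{C:intnor} (via $\Lambda^{nr}=\Lambda-\alpha_+$ and the identity $\dim_\C\mathrm{coker}(\mathcal T_{x,y})=\dim_\C\Hp(x,y)-\alpha_+(x,y)$ from \Cref{S:coxeter}) applied in the range where \Cref{T:main} gives $\Lambda(\L(\fm),\L(\fn))=\hp(C(\fm),C(\fn))$. Your extra semicontinuity remark, checking that the minimum defining $\dim_\C\mathrm{coker}(\mathcal T_{C(\fm),C(\fn)})$ is attained on the locus where both $\dim_\C\Hp$ is minimal and $p_+$ lands in the open orbits, is a correct and worthwhile detail that the paper leaves implicit.
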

\subsection{Best matching functions}
Finally, let us give as a consequence of \Cref{T:main} the following computation of $\Lambda(\Z(\fm),\Z((\fn))$ due to \cite[Remark 2.2]{LapMin25} if at least one of the multisegments is a ladder. Recall a multisegment $[a_1,b_1]+\ldots+[a_k,b_k]$ is called a \emph{ladder} if $a_1>\ldots>a_k,\, b_1>\ldots>b_k$.
Let $\fm=\De_1+\ldots+\De_k, \fn=\Gamma_1+\ldots+\Gamma_j\in\mq$ be two multisegments. We recall from \cite[§5]{LapMin16}
\[X_{\fn,\fm}=\{(i,j)\in \{1,\ldots,k\}\times \{1,\ldots,j\}:\De_i\prec\Ga_j\}\]
and 
\[Y_{\fn,\fm}=\{(i,j)\in \{1,\ldots,k\}\times \{1,\ldots,j\}:\sh{\De_i}\prec\Ga_j\}.\]
We define a relation $\rsa$ between $Y_{\fn,\fm}$ and $X_{\fn,\fm}$ via
\[(r,s)\rsa(r',s')\Leftrightarrow r=r'\text{ and }\Gamma_s\prec\Ga_{s'}\text{ or }s=s'\text{ and }\De_{r'}\prec\De_r.\]
A matching function $(f,I)$ for $(\fn,\fm)$ consists of a subset $I\subseteq X_{\fn,\fm}$ and an injective function $f\colon I\ra Y_{\fn,\fm}$ such that for all $i\in I$, $f(i)\rsa i$. A matching function $(f,I)$ is called a best matching function of $(\fn,\fm)$ if $\# I=\max_{(f',I')}\#I',$ where $(f',I')$ ranges over all matching functions of $(\fn,\fm)$. 
\begin{lemma}[{\cite[6.4]{MinLa18}, \cite[8.3]{LapMin25}}]
    Let $\fm,\fn\in \mq$ and either $\fm$ or $\fn$ be a ladder. Let moreover $(f,I)$ be a best matching function
    of $(\fm,\fn)$. Then
    \[\dim_\C\mathrm{coker}(\mathcal{T}_{C(\fm),C(\fn)})=\#X_{\fm,\fn}\setminus I,\]
    \[\hp(C(\fm),C(\fn))=\#Y_{\fm,\fn}\setminus f(I).\]
\end{lemma}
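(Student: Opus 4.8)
The plan is to reduce both displayed identities to a single statement about the generic rank of the pairing $\mathcal{T}$, and then to prove that statement. I begin by observing that the two identities are equivalent for \emph{any} $\fm,\fn\in\mq$ and any best matching function $(f,I)$: since $f$ is injective, $\#(X_{\fm,\fn}\setminus I)=\#X_{\fm,\fn}-\#I$ and $\#(Y_{\fm,\fn}\setminus f(I))=\#Y_{\fm,\fn}-\#I$, while from $\dim_\C\ho_{Q^+}(\mu_+(\De),\mu_+(\Gamma))=1\iff\sh{\Gamma}\prec\De$ and $\dim_\C\mathrm{Ext}_{Q^+}^1(\mu_+(\De),\mu_+(\Gamma))=1\iff\De\prec\Gamma$ (recalled above) one gets $\#X_{\fm,\fn}-\#Y_{\fm,\fn}=-\alpha_+(\mu_+(\fm),\mu_+(\fn))$; on the other hand the Coxeter exact sequence of \Cref{S:coxeter}, evaluated at a point generic in $C(\fm)\times C(\fn)$, gives $\dim_\C\mathrm{coker}(\mathcal{T}_{C(\fm),C(\fn)})=\hp(C(\fm),C(\fn))-\alpha_+(\mu_+(\fm),\mu_+(\fn))$. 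Hence the two sides of the two identities differ by the same quantity, and it is enough to prove one of them, say the formula for $\hp$.

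By the same exact sequence, for $x$ generic in $C(\fm)$ and $y$ generic in $C(\fn)$ — so that $p_+(x)\cong\mu_+(\fm)$ and $p_+(y)\cong\mu_+(\fn)$ as representations of $Q^+$ — we have $\hp(C(\fm),C(\fn))=\dim_\C\ker(\mathcal{T}_{x,y})=\dim_\C\ho_{Q^+}(x,y)-\mathrm{rank}(\mathcal{T}_{x,y})=\#Y_{\fm,\fn}-\mathrm{rank}(\mathcal{T}_{x,y})$. So the lemma reduces to the assertion that $\mathrm{rank}(\mathcal{T}_{x,y})=\#I$ for generic $x,y$. In the bases of $\ho_{Q^+}(x,y)$ and of the target of $\mathcal{T}_{x,y}$ attached to the individual pairs of segments — indexed by $Y_{\fm,\fn}$ and $X_{\fm,\fn}$ respectively — a direct computation, which is the one underlying \cite[§5]{LapMin16}, shows that the $(\alpha,\beta)$-entry of $\mathcal{T}_{x,y}$ (for $\alpha\in X_{\fm,\fn}$, $\beta\in Y_{\fm,\fn}$) vanishes unless $\beta\rsa\alpha$. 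Therefore the term rank of this matrix, and in particular $\mathrm{rank}(\mathcal{T}_{x,y})$, is bounded above by the maximal size of a $\rsa$-matching, i.e.\ $\mathrm{rank}(\mathcal{T}_{x,y})\le\#I$.

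The reverse inequality $\mathrm{rank}(\mathcal{T}_{x,y})\ge\#I$ is the heart of the lemma — it is \cite[6.4]{MinLa18}, \cite[8.3]{LapMin25} — and is where the hypothesis that $\fm$ or $\fn$ be a ladder is used. One fixes a maximum matching and, using that the segments of a ladder are linearly ordered by their left endpoints and that $\prec$ refines this order, arranges its pairs so that for a sufficiently generic choice of the structure constants of $x$ and $y$ the corresponding $\#I\times\#I$ submatrix of $\mathcal{T}_{x,y}$ is triangular with non-zero entries on the diagonal; the ladder condition is precisely what ensures that the relation $f_{i+1}e_i=e_{i-1}f_i$ defining $\Pi$ forces none of these diagonal entries — nor any off-diagonal entry destroying the triangular shape — to vanish identically. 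I expect this non-vanishing of a maximum-matching minor to be the main obstacle, as it requires an explicit enough description of the generic $\Pi$-module structure in $C(\fm)$ (respectively $C(\fn)$) to control the composition pairing $\mathcal{T}_{x,y}$. An alternative route, mirroring the induction in the proof of \Cref{T:main}, is to peel a basic multisegment $\fa$ off the ladder via \Cref{L:bala}, reduce $\hp$ using \Cref{L:indcomp}, and verify that best matching functions decompose compatibly along the resulting $\fa$-decomposition (noting that the $\fa$-reduced part of a ladder is again a ladder); along that route the main obstacle becomes proving this combinatorial compatibility.
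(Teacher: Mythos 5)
The paper does not prove this lemma; it is imported from \cite[6.4]{MinLa18} and \cite[8.3]{LapMin25} with no in-paper argument, so there is nothing to compare your proof against directly. Your preliminary reductions are sound: the two displayed identities are indeed equivalent via the Coxeter exact sequence of \Cref{S:coxeter} (both differ from $\#X_{\fm,\fn}-\#I$, respectively $\#Y_{\fm,\fn}-\#I$, by the common shift $\alpha_+(\mu_+(\fm),\mu_+(\fn))=\#Y_{\fm,\fn}-\#X_{\fm,\fn}$); for generic $x\in C(\fm)$, $y\in C(\fn)$ one has $\hp(C(\fm),C(\fn))=\dim_\C\ker\mathcal{T}_{x,y}$ and $\dim_\C\ho_{Q^+}(x,y)=\#Y_{\fm,\fn}$, reducing the lemma to $\mathrm{rank}(\mathcal{T}_{x,y})=\#I$; and the term-rank argument gives $\mathrm{rank}\le\#I$ once one knows the $(\alpha,\beta)$-entry of $\mathcal{T}_{x,y}$ vanishes off the $\rsa$-support, a fact you assert but do not verify.

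However, the substance of the lemma is the reverse inequality $\mathrm{rank}(\mathcal{T}_{x,y})\ge\#I$, and you explicitly defer it to the cited sources (``it is \cite[6.4]{MinLa18}, \cite[8.3]{LapMin25}''), offering only a heuristic sketch of a triangularization argument and an unexecuted alternative inductive route via \Cref{L:bala}/\Cref{L:indcomp}. Since you neither carry out the nonvanishing of a maximum-matching minor for the generic $\Pi$-module structure nor prove the claimed compatibility of best matching functions with the $\fa$-decomposition of a ladder, the proposal reduces the lemma to itself rather than proving it. To turn this into a proof you would need to do one of those two things in full; the second route would also align with the proof strategy the paper uses for \Cref{T:main}.
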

\begin{corollary}\label{C:bmpole}
Let $\fm,\fn\in \Ms(\rho)\cong\mq$ and either $\fm$ or $\fn$ be a ladder. Let moreover $(f,I)$ be a best matching function
    of $(\fm,\fn)$.
     Then \[\Lambda^{nr}(\L(\fm),\L(\fn))=\#X_{\fm,\fn}\setminus I\] and \[\Lambda(\L(\fm),\L(\fn))=\#Y_{\fm,\fn}\setminus f(I).\]
\end{corollary}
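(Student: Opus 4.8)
The plan is to derive \Cref{C:bmpole} by combining two already-proven inputs: \Cref{T:main} (equivalently its normalized reformulation in the \Cref{C:intnor} discussion) together with the combinatorial lemma of \cite{MinLa18}, \cite{LapMin25} that computes $\dim_\C\mathrm{coker}(\mathcal{T}_{C(\fm),C(\fn)})$ and $\hp(C(\fm),C(\fn))$ in terms of best matching functions when one of $\fm,\fn$ is a ladder. First I would observe that a ladder multisegment is in particular balanced --- since $a_1>\dots>a_k$ and $b_1>\dots>b_k$ force all left endpoints distinct and all right endpoints distinct, so $\fm$ is regular, and a ladder contains no submultisegment of type $4231$ or $3412$ because the ordering of endpoints in those patterns is incompatible with the strict monotonicity of both $a_i$ and $b_i$. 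Hence whenever $\fm$ or $\fn$ is a ladder, the hypothesis ``at least one of $\fm,\fn,\fm^*,\fn^*$ is balanced'' of \Cref{T:main} is satisfied, so the conclusion $\Lambda(\Z(\fm),\Z(\fn))=\hp(C(\fn),C(\fm))$ applies.

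Next I would assemble the identifications. By \Cref{T:main} we have $\Lambda(\L(\fm),\L(\fn))=\hp(C(\fm),C(\fn))$ (this is literally the second displayed equality of \Cref{T:main}), and the cited best-matching lemma gives $\hp(C(\fm),C(\fn))=\#Y_{\fm,\fn}\setminus f(I)$ for any best matching function $(f,I)$ of $(\fm,\fn)$. Chaining these yields $\Lambda(\L(\fm),\L(\fn))=\#Y_{\fm,\fn}\setminus f(I)$, the second assertion of \Cref{C:bmpole}. For the first assertion, I would use the equivalence spelled out just before \Cref{C:intnor}: since $\dim_\C\mathrm{coker}(\mathcal{T}_{x,y})=\dim_\C\Hp(x,y)-\alpha_+(x,y)$ and $\Lambda^{nr}(\L(\fm),\L(\fn))=\Lambda(\L(\fm),\L(\fn))-\alpha_+(\L(\fm),\L(\fn))$, with $\alpha_+$ on both sides matching by the Corollary computing $\alpha_+(\L(\fm),\L(\fn))=\alpha_+(\mu_+(\fm),\mu_+(\fn))$, the equality $\Lambda(\L(\fm),\L(\fn))=\hp(C(\fm),C(\fn))$ is equivalent to $\Lambda^{nr}(\L(\fm),\L(\fn))=\dim_\C\mathrm{coker}(\mathcal{T}_{C(\fm),C(\fn)})$. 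Then the best-matching lemma supplies $\dim_\C\mathrm{coker}(\mathcal{T}_{C(\fm),C(\fn)})=\#X_{\fm,\fn}\setminus I$, giving $\Lambda^{nr}(\L(\fm),\L(\fn))=\#X_{\fm,\fn}\setminus I$.

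I do not expect any serious obstacle here: the corollary is essentially a transcription that routes the geometric quantities $\hp$ and $\dim_\C\mathrm{coker}(\mathcal{T})$ through the already-established bridge of \Cref{T:main} into the purely combinatorial best-matching formulas. The one point that deserves a line of care is verifying that ``ladder $\Rightarrow$ balanced'' so that \Cref{T:main} is applicable --- this is the only place where a small argument (rather than a citation) is needed, and it is immediate from the monotonicity of endpoints ruling out the $4231$ and $3412$ endpoint configurations. A secondary bookkeeping point is to make sure the normalization factor $\alpha_+$ appearing in $\Lambda^{nr}$ is exactly the $\alpha_+$ appearing in the Coxeter-functor formula $\dim_\C\mathrm{coker}(\mathcal{T}_{x,y})=\dim_\C\Hp(x,y)-\alpha_+(x,y)$; this is guaranteed by the two defining properties of $\alpha_+$ recalled from \cite{MoeWal89} together with the $\ho_{Q^+}$ and $\mathrm{Ext}^1_{Q^+}$ computations in the proof of the Corollary preceding \Cref{C:intnor}, which show both are computed by the same precedence conditions $\sh{\Gamma}\prec\De$ and $\De\prec\Gamma$ on segments. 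With these two remarks in place the proof is a direct concatenation of the cited results.
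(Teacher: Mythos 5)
Your proof is correct and is precisely the argument the paper leaves implicit: Corollary~\ref{C:bmpole} is a direct concatenation of Theorem~\ref{T:main} (or equivalently the unnamed corollary following Conjecture~\ref{C:intnor} for the normalized version) with the preceding best-matching lemma of \cite{MinLa18}, \cite{LapMin25}, plus the observation that a ladder multisegment is regular and balanced so that Theorem~\ref{T:main} applies. Your verification that ladders avoid the $4231$ and $3412$ patterns (because both require the $a$-ordering and $b$-ordering of the chosen segments to disagree, which is impossible when both are strictly decreasing) and your check that the $\alpha_+$ in the Rankin--Selberg normalization and in the Coxeter cokernel formula coincide are exactly the right sanity checks, and both are sound.
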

As an example, we will give a formula for the intertwining operator between two Speh-representations. 
Recall that a multisegment $\fm=\De_1+\ldots+\De_k\in\Ms(\rho), \,\De_i=[a_i,b_i]_\rho$ is called \emph{Speh} if $\De_{i+1}=\sh{\De_i}$ for $\ain{i}{1}{k-1}$. Note that for such $\fm$, $\fm^*$ is again Speh and simple to compute, namely \[\fm^*=[b_k,b_1]_\rho+\ldots+[a_k,a_1]_\rho.\]
\begin{corollary}\label{C:Speh}
    Let $\fm=\De_1+\ldots+\De_k,\fn=\Gamma_1+\ldots+\Gamma_k\in \Ms(\rho)$ be two Speh-multisegments.
    Then \[\Lambda(\L(\fm),\L(\fn))=\min(\#\{j:\sh{\Gamma_j}\prec\De_1\},\#\{i:\sh{\Gamma_l}\prec\De_i\})\]
    and
    \[\Lambda^{nr}(\L(\fm),\L(\fn))=\min(\#\{j:{\Gamma_j}\prec\De_k\},\#\{i:{\Gamma_1}\prec\De_i\}).\]    
\end{corollary}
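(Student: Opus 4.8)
The plan is to deduce \Cref{C:Speh} from \Cref{C:bmpole} by exhibiting an explicit best matching function. First observe that a Speh multisegment $\fm=\De_1+\ldots+\De_k$ is in particular a ladder: from $\De_{i+1}=\sh{\De_i}$ one gets $a(\De_i)=a(\De_1)-i+1$ and $b(\De_i)=b(\De_1)-i+1$, both strictly decreasing in $i$. Hence both $\fm$ and $\fn$ are ladders, and by \Cref{C:bmpole}, for any best matching function $(f,I)$ of $(\fm,\fn)$ we have
\[\Lambda^{nr}(\L(\fm),\L(\fn))=\#\bigl(X_{\fm,\fn}\setminus I\bigr),\qquad \Lambda(\L(\fm),\L(\fn))=\#\bigl(Y_{\fm,\fn}\setminus f(I)\bigr).\]
So it suffices to write down one best matching function of $(\fm,\fn)$ and count the points it misses.

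To that end I would put the data in coordinates. Writing $\De_j=[a_1-j+1,b_1-j+1]_\rho$ and $\Gamma_i=[c_1-i+1,d_1-i+1]_\rho$ and unwinding the definition of $\prec$, one checks that whether a pair $(i,j)$ lies in $X_{\fm,\fn}$, respectively $Y_{\fm,\fn}$, depends only on the difference $j-i$: each condition amounts to $j-i$ lying in a fixed interval of $w:=\min(l(\De_1),l(\Gamma_1))$ consecutive integers, the interval for $Y_{\fm,\fn}$ being the one for $X_{\fm,\fn}$ shifted by one step. Thus, inside the $k\times k$ grid, $X_{\fm,\fn}$ and $Y_{\fm,\fn}$ are unions of $w$ consecutive diagonals, with $Y_{\fm,\fn}$ obtained from $X_{\fm,\fn}$ by sliding over to the adjacent diagonal. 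For the same reason $\rsa$ simplifies so that one may pass from a point of $X_{\fm,\fn}$ to the point of $Y_{\fm,\fn}$ lying one step away on the diagonal occupied by $Y_{\fm,\fn}$ (and, more generally, by bounded moves within a single row or column). Sliding every point of $X_{\fm,\fn}$ one step onto that adjacent diagonal then defines a matching function $(f_0,I_0)$ of $(\fm,\fn)$: it is injective and lands in $Y_{\fm,\fn}$, its only defect being that the points of $X_{\fm,\fn}$ in one extremal column have no room to move. Thus $I_0$ is $X_{\fm,\fn}$ with that extremal column deleted, and $f_0(I_0)$ is $Y_{\fm,\fn}$ with a corresponding extremal column deleted.

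The heart of the matter is to verify that $(f_0,I_0)$ is a best matching function, and I would do this by checking that it admits no augmenting path. Using the simplified form of $\rsa$, an augmenting path for $(f_0,I_0)$ would have to start at an uncovered point of $Y_{\fm,\fn}$ in the deleted column, then pass through exactly one point of $X_{\fm,\fn}$ in each successive column while strictly increasing the other coordinate at every step, and finally end at an uncovered point of $X_{\fm,\fn}$; but a strictly monotone staircase that meets $k$ consecutive columns cannot fit inside a $k\times k$ grid, so no augmenting path exists and $I_0$ has maximal size. It follows that $\Lambda^{nr}(\L(\fm),\L(\fn))$ equals the number of points of $X_{\fm,\fn}$ in the deleted column, namely $\#\{j:\Gamma_j\prec\De_k\}$, and that $\Lambda(\L(\fm),\L(\fn))$ equals the number of points of $Y_{\fm,\fn}$ in its deleted column, namely $\#\{j:\sh{\Gamma_j}\prec\De_1\}$. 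Finally, the reflection $(i,j)\mapsto(k+1-j,k+1-i)$ preserves the $k\times k$ grid and every diagonal, hence preserves both bands, while it exchanges the deleted column of $X_{\fm,\fn}$ with its first row and the deleted column of $Y_{\fm,\fn}$ with its last row; this gives $\#\{j:\Gamma_j\prec\De_k\}=\#\{i:\Gamma_1\prec\De_i\}$ and $\#\{j:\sh{\Gamma_j}\prec\De_1\}=\#\{i:\sh{\Gamma_k}\prec\De_i\}$, which puts both answers in the asserted form. I expect the main obstacle to be the bookkeeping: making the staircase-counting argument airtight and carefully tracking how the two diagonal bands are clipped by the edges of the $k\times k$ grid as $k$, $w$ and the offset of the bands interact; the rest is the routine dictionary between precedence of (shifted) Speh segments and membership of an index difference in a fixed interval.
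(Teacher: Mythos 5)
Your approach is genuinely different from the paper's. The paper also introduces the two matching functions $f_1(i,j)=(i+1,j)$ on $I_1=\{(i,j):\Gamma_j\prec\De_i,\,i<k\}$ and $f_2(i,j)=(i,j-1)$ on $I_2=\{(i,j):\Gamma_j\prec\De_i,\,j>1\}$, but it never verifies directly that either is a best matching function. Instead it observes, via \Cref{C:bmpole}, that the corollary is \emph{equivalent} to one of $(f_1,I_1),(f_2,I_2)$ being best, and then proves the formula for $\Lambda$ by induction on $k+l$ through the representation-theoretic reduction of \Cref{L:central} (peeling off $\De_k$ via the basic multisegment $\fa=\De_k^*$). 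You, by contrast, stay entirely inside the combinatorics and attempt to show directly that a slid-diagonal matching has no augmenting path. This is a legitimate strategy in principle, and if carried out it would give a self-contained proof of (3) in the paper's list of equivalences.

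However, there is a real gap. Your reflection argument $(i,j)\mapsto(k+1-j,k+1-i)$, used to identify the two counts inside the min, only makes sense on a square $k\times k$ grid. The corollary is meant to allow $\fm$ to have $k$ segments and $\fn$ to have $l$ segments with $k\neq l$ (the $\Gamma_l$ in the formula, and $\Lambda(\L(\fm-\De_k),\L(\fn-\Gamma_l))$ in the paper's own proof, make this clear; the ``$\Gamma_1+\ldots+\Gamma_k$'' in the statement is a typo). For $k\neq l$ the two slid matchings do not have the same defect, and the $\min$ in the formula is precisely recording that one of $(f_1,I_1),(f_2,I_2)$ is strictly better than the other. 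Your sketch treats the answer as a single number $\#(X\setminus I_0)$ and then equates the two expressions by symmetry, which collapses the min; this is wrong in general. A correct combinatorial argument would have to pick the slide direction with the smaller deleted set and prove the no-augmenting-path statement only for that one, keeping track of how $k$, $l$ and the band width $w=\min(l(\De_1),l(\Gamma_1))$ interact.

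The no-augmenting-path step is also not airtight as stated. The relation $\rsa$ for Speh multisegments allows moves of size up to $l(\De_1)$ (resp.~$l(\Gamma_1)$), not just one, within a row or column, so the ``strictly increases by one at each step'' description of an alternating path is not literally correct; what is true is that the relevant coordinate increases by at least one, which still gives a staircase, but the endpoint constraint is then an inequality between $k$, $l$, and the starting index, not a clean ``$k$ columns in a $k\times k$ grid'' count. That inequality is exactly what determines whether $f_1$ or $f_2$ is best, so the analysis you would have to carry out there is not bookkeeping that can be deferred; it is the content of the min, and omitting it leaves the proposal short of a proof.
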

The author would like to thank Alberto M{\'i}nguez for suggesting the above explicit formula to him.
\begin{proof}
Set $I_1=\{(i,j):{\Gamma_j}\prec\De_i,\, i<k\}$ and $I_2=\{(i,j):{\Gamma_j}\prec\De_i,\, j>1\}$. Then we can consider the matching functions $(f_1, I_1), f_1(i,j)=(i+1,j)$ and $(f_2, I_2), f_2(i,j)=(i,j-1)$. Note that by \Cref{C:bmpole} the following three statements are equivalent: \begin{enumerate}
    \item $\Lambda(\L(\fm),\L(\fn))=\min(\#\{j:\sh{\Gamma_j}\prec\De_1\},\#\{i:\sh{\Gamma_l}\prec\De_i\}),$
    \item $\Lambda^{nr}(\L(\fm),\L(\fn))=\min(\#\{j:{\Gamma_j}\prec\De_k\},\#\{i:{\Gamma_1}\prec\De_i\}),$
    \item Either $(f_1,I_1)$ or $(f_2,I_2)$ is a best matching function.
\end{enumerate} 
In particular, we have that all three claims hold if either $k=1$ or $l=1$ by the greedy algorithm of \cite[§4.6]{LapMin16}. 
For the general case we will prove (1) inductively on $l+k$ via \Cref{L:central}. Indeed, we can use the lemma with $\fa=\De_k^*$, in which case $\fa(\L\fm))=\L(\De_k),\, \Dde(\L(\fm))=\L(\De_1+\ldots+\De_{k-1})$ and $\fa(\L(\fn))$ is non-zero if and only if $\sh{\Gamma_l}\prec \De_k$. In this case write $\De_k=[a,b]_\rho,\, \Gamma_l=[c,d]_\rho$, and it is not hard to see that $\fa(\L(\fn))=\L([a,d]_\rho)$ and $\Dde(\L(\fn))=\L(\Gamma_1+\ldots+\Gamma_{l-1}+[c,a-1]_\rho)$. Finally, using \Cref{L:polevanish}, it is also easy to see that $\Lambda(\Dde(\fm),\Dde(\fn))=\Lambda(\Dde(\fm), \Gamma_1+\ldots+\Gamma_{l-1}).$

The upshot of this discussion is that
\[\Lambda(\L(\fm),\L(\fn))=\begin{cases}
    \Lambda(\L(\fm-\De_k),\L(\fn-\Gamma_l))+1&\text{ if }\sh{\Gamma_l}\prec \De_k,\, \sh{\Gamma_l}\prec \De_1, \\
    \Lambda(\L(\fm-\De_k),\L(\fn-\Gamma_l))&\text{ if }\sh{\Gamma_l}\prec \De_k,\, \sh{\Gamma_l}\nprec \De_1, \\
    \Lambda(\L(\fm-\De_k),\L(\fn))&\text{ if }\sh{\Gamma_l}\nprec \De_k.
\end{cases}\]
Note that, on the other hand, we have the following.
\[\min(\#\{j:\sh{\Gamma_j}\prec\De_1\},\#\{i:\sh{\Gamma_l}\prec\De_i\})=\]\[=\begin{cases}
    \min(\#\{j:\sh{\Gamma_j}\prec\De_1, j<l\},\#\{i:\sh{\Gamma_{l}}\prec\De_i,i>1\})+1&\text{ if }\sh{\Gamma_l}\prec \De_k,\, \sh{\Gamma_l}\prec \De_1,\\
    \min(\#\{j:\sh{\Gamma_j}\prec\De_1, j<l\},\#\{i:\sh{\Gamma_{l}}\prec\De_i,i>1\})&\text{ if }\sh{\Gamma_l}\prec \De_k,\, \sh{\Gamma_l}\nprec \De_1,\\
    \min(\#\{j:\sh{\Gamma_j}\prec\De_1\},\#\{i:\sh{\Gamma_{l}}\prec\De_i,i<k\})&\text{ if }\sh{\Gamma_l}\nprec \De_k.
\end{cases}\]
Finally observe that
\[\#\{i:\sh{\Gamma_{l}}\prec\De_i,i>1\})=\#\{i:\sh{\Gamma}_{l-1}\prec\De_i, i<k\}).\]
The induction step follows now easily.
\end{proof}
\bibliographystyle{abbrv}
\bibliography{Reference.bib}
\end{document}